\newtheorem{thm}{Theorem}[section]
\newtheorem{lem}{Lemma}[section]
\newtheorem{prop}{Proposition}[section]
\theoremstyle{definition}
\newtheorem{defn}{Definition}[section]
\theoremstyle{remark}
\newtheorem{rem}{Remark}[section]
\numberwithin{equation}{section}
\numberwithin{equation}{section}
\newcounter{saveeqn}
\newcommand{\eqnref}[1]{(\ref {#1})}
\newcommand{\Bz}{\mathbf{z}}
\newcommand{\ba}{\mathbf{a}}
\newcommand{\bE}{\mathbf{E}}
\newcommand{\bH}{\mathbf{H}}
\newcommand{\Bx}{\mathbf{x}}
\newcommand{\By}{\mathbf{y}}
\newcommand{\bV}{\mathbf{V}}
\newcommand{\BB}{\mathbf{B}}
\newcommand{\BW}{\mathbf{W}}
\newcommand{\GG}{\Gamma}
\newcommand{\Gs}{\sigma}
\newcommand{\GL}{\Lambda}
\newcommand{\tdx}{\tilde{\Bx}}
\newcommand{\tdy}{\tilde{\By}}
\newcommand{\Acal}{\mathcal{A}}
\newcommand{\Kcal}{\mathcal{K}}
\newcommand{\Lcal}{\mathcal{L}}
\newcommand{\Scal}{\mathcal{S}}
\newcommand{\Mcal}{\mathcal{M}}
\newcommand{\Ocal}{\mathcal{O}}
\newcommand{\ds}{\displaystyle}
\newcommand{\la}{\langle}
\newcommand{\ra}{\rangle}
\newcommand{\RR}{\mathbb{R}}
\newcommand{\p}{\partial}
\newcommand{\ti}{\tilde}
\newcommand{\beq}{\begin{equation}}
\newcommand{\eeq}{\end{equation}}
\DeclareMathAlphabet{\itbf}{OML}{cmm}{b}{it}
\title[Regularized Full and Partial EM Cloaks]{Full and partial cloaking in electromagnetic scattering}
\author{Youjun Deng}
\address{School of Mathematics and Statistics, Central South University, Changsha, Hunan, P. R. China.}
\email{youjundeng@csu.edu.cn, dengyijun\_001@163.com}
\author{Hongyu Liu}
\address{Department of Mathematics, Hong Kong Baptist University, Kowloon, Hong Kong SAR.}
\email{hongyu.liuip@gmail.com}
\author{Gunther Uhlmann}
\address{Department of Mathematics, University of Washington, Seattle, WA 98195, USA. \vspace*{-3mm}}
\address{\vspace*{-2mm} and}
\address{Institute for Advanced Study, Hong Kong University of Science and Technology, Hong Kong SAR.}
\email{gunther@math.washington.edu}
\date{} % Activate to display a given date or no date (if empty),
\begin{document}
\maketitle

\begin{abstract}

In this paper, we consider two regularized transformation-optics cloaking schemes for electromagnetic (EM) waves. Both schemes are based on the blowup construction with the generating sets being, respectively, a generic curve and a planar subset. We derive sharp asymptotic estimates in assessing the cloaking performances of the two constructions in terms of the regularization parameters and the geometries of the cloaking devices. The first construction yields an approximate full-cloak, whereas the second construction yields an approximate partial-cloak. Moreover, by incorporating properly chosen conducting layers, both cloaking constructions are capable of nearly cloaking arbitrary EM contents. This work complements the existing results in \cite{Ammari4,BL,BLZ} on approximate EM cloaks with the generating set being a singular point, and it also extends \cite{DLU15,LiLiuRonUhl} on regularized full and partial cloaks for acoustic waves governed by the Helmholtz system to the more challenging EM case governed by the full Maxwell system.

\medskip

\medskip

\noindent{\bf Keywords:}~~Maxwell equations, invisibility cloaking, transformation optics, partial and full cloaks, regularization, asymptotic estimates

\noindent{\bf 2010 Mathematics Subject Classification:}~~35Q60, 31B10, 35R30, 78A40

\end{abstract}

\section{Introduction}

\subsection{Background}
This paper concerns with invisibility cloaking of electromagnetic (EM) waves via the approach of transformation optics. This method which is based on the transformation properties of the optical material parameters and the invariance properties of the equations modeling the wave phenomena, was pioneered in \cite{GLU} for electrostatics. For Maxwell's system, the same transformation optics approach was developed in \cite{PenSchSmi}. In two dimensions, Leonhardt \cite{Leo} used the conformal mapping to cloak rays in the geometric optics approximation. The obtained cloaking materials are called {\it metamaterials}. The metamaterials proposed in \cite{GLU2,Leo,PenSchSmi} for the ideal cloaks are singular, and this has aroused great interest in the literature to deal with the singular structures. In \cite{GKLU3,LZ1}, the authors proposed to consider the finite-energy solutions from singularly weighted Sobolev spaces for the underlying singular PDEs, and both acoustic cloaking and EM cloaking were treated. In \cite{GKLUoe,GKLU_2,KOVW,KSVW,Liu}, the authors proposed to avoid the singular structures by incorporating regularization into the cloaking construction, and instead of ideal invisibility cloak, one considers approximate/near invisibility cloak. The latter approach has been further investigated in \cite{Ammari2,Ammari3,LiuSun} for the conductivity equation and Helmholtz system, modeling electric impedance tomography (EIT) and acoustic wave scattering, respectively; and in \cite{Ammari4,BL,BLZ} for the Maxwell system, modeling the EM wave scattering. For all of the above mentioned work on regularized approximate cloaks, the {\it generating set} is a singular point, and one always achieves the full-cloak; that is, the invisibility is attainable for detecting waves coming from every possible incident/impinging direction, and observations made at every possible angle. In a recent article \cite{LiLiuRonUhl}, the authors proposed to study regularized partial/customized cloaks for acoustic waves; that is, the invisibility is only attainable for limited/customized apertures of incidence and observation. The key idea is to properly choose the generating set for the blowup construction of the cloaking device. In \cite{LiLiuRonUhl}, the authors only proved qualitative convergence for the proposed partial/customized cloaking construction, and the corresponding result was further quantified in \cite{DLU15} by the authors of the current article.

In this paper, we shall extend \cite{DLU15,LiLiuRonUhl} on the regularized partial/customized cloaks for acoustic wave scattering to the case of electromagnetic wave scattering. We present two near-cloaking schemes with the generating sets being a generic curve or a planar subset, respectively. It is shown that the first scheme yields an approximate full-cloak, whose invisibility effect is attainable for the whole aperture of incidence and observation angles. The result obtained complements
\cite{Ammari4,BL,BLZ} on approximate full cloaks. However, the generating set for the blowup construction considered in \cite{Ammari4,BL,BLZ} is a singular point, whereas in this study, the generating set is a generic curve. The cloaking material in our full-cloaking scheme is less ``singular" than those in \cite{Ammari4,BL,BLZ}, but at the cost of losing some degree of accuracy on the invisibility approximation. The second scheme would yield an approximate partial-cloak with limited apertures of incidence and observation. We derive sharp asymptotic estimates in assessing the cloaking performances in terms of the regularization parameters and the geometries of the generating sets. The estimates are independent of the EM contents being cloaked, which means that the proposed cloaking schemes are capable of nearly cloaking arbitrary EM objects. Compared to \cite{Ammari4,BL,BLZ} on approximate full-cloaks, we need to deal with anisotropic geometries, whereas compared to \cite{DLU15,LiLiuRonUhl} on approximate partial-cloaks for acoustic scattering governed by the Helmholtz system, we need to tackle the more challenging Maxwell system. Finally, we refer the readers to \cite{CC,GKLU4,GKLU5,LiuUhl,U2} for surveys on the theoretical and experimental progress on transformation-optics cloaking in the literature.

\subsection{Mathematical formulation}

Consider a homogeneous space with the (normalized) EM medium parameters described by the electric permittivity $\varepsilon_0=\mathbf{I}_{3\times 3}$ and magnetic permeability $\mu_0=\mathbf{I}_{3\times 3}$. Here and also in what follows, $\mathbf{I}_{3\times 3}$ denotes the identity matrix in $\mathbb{R}^{3\times 3}$. For notational convenience, we also let $\sigma_0:=0\cdot \mathbf{I}_{3\times 3}$ denote the conductivity tensor of the homogeneous background space. We shall consider the invisibility cloaking in the homogeneous space described above. Following the spirt in \cite{Ammari4,BL,BLZ}, the proposed cloaking device is compactly supported in a bounded domain $\Omega$, and takes a three-layered structure. Let $\Omega_a\Subset\Omega_c\Subset\Omega$ be bounded domains such that $\Omega_a$, $\Omega_c\backslash\overline{\Omega}_a$ and $\Omega\backslash\overline{\Omega}_c$ are connected, and they represent, respectively, the cloaked region, conducting layer and cloaking layer of the proposed cloaking device. Let $\Gamma_0$ be a bounded open set in $\mathbb{R}^3$, and it shall be referred to as a {generating set} in the following. For $\delta\in\mathbb{R}_+$, we let $D_\delta$ denote an open neighborhood of $\Gamma_0$ such that $D_\delta\rightarrow \Gamma_0$ (in the sense of Hausdorff distance) as $\delta\rightarrow +0$. $D_\delta$ will be referred to as the {\it virtual domain}, and shall be specified below. Throughout, we assume that there exists a bi-Lipschitz and orientation-preserving mapping $F_\delta: \mathbb{R}^3\rightarrow\mathbb{R}^3$ such that
\begin{equation}\label{eq:transw}
F_\delta(D_{\delta/2})=\Omega_a,\ F_\delta(D_\delta\backslash\overline{D}_{\delta/2})=\Omega_c\backslash\overline{\Omega}_a,\ F_\delta(\Omega\backslash\overline{D}_\delta)=\Omega\backslash\overline{\Omega}_c\ \mbox{and}\ F_\delta|_{\mathbb{R}^3\backslash\Omega}=\mbox{Identity}.
\end{equation}
Next, we describe the EM medium parameter distributions $\{\mathbb{R}^3; \varepsilon,\mu,\sigma\}$ in the physical space containing the cloaking device, and $\{\mathbb{R}^3; \varepsilon_\delta,\mu_\delta,\sigma_\delta\}$ in the virtual space containing the virtual domain. The EM medium parameters are all assumed to be symmetric-positive-definite-matrix valued functions, and they characterize, respectively, the electric permittivity, magnetic permeability and electric conductivity. In what follows, $\{\mathbb{R}^3; \varepsilon,\mu,\sigma\}$ and $\{\mathbb{R}^3; \varepsilon_\delta,\mu_\delta,\sigma_\delta\}$ shall be referred to as, respectively, the physical and virtual scattering configurations. 
Let
\beq\label{eq:clkstruc1}
\{\RR^3; \varepsilon, \mu, \Gs\}=
\left\{
\begin{array}{ll}
\varepsilon_0, \mu_0, \Gs_0 & \mbox{in} \quad \RR^3\setminus\overline{\Omega}, \\
\varepsilon_c^*, \mu_c^*, \Gs_c^* & \mbox{in} \quad \Omega\setminus\overline{\Omega}_c, \\
\varepsilon_l^*, \mu_l^*, \Gs_l^* & \mbox{in} \quad \Omega_c\setminus\overline{\Omega}_{a}, \\
\varepsilon_a^*, \mu_a^*, \Gs_a^* & \mbox{in} \quad \Omega_{a},
\end{array}
\right.
\eeq
and
\beq\label{eq:struc}
\{\RR^3; \varepsilon_\delta, \mu_\delta, \Gs_\delta\}=
\left\{
\begin{array}{ll}
\varepsilon_0, \mu_0, \Gs_0 & \mbox{in} \quad \RR^3\setminus\overline{D}_\delta, \\
\varepsilon_l, \mu_l, \Gs_l & \mbox{in} \quad D_\delta\setminus\overline{D}_{\delta/2}, \\
\varepsilon_a, \mu_a, \Gs_a & \mbox{in} \quad D_{\delta/2}.
\end{array}
\right.
\eeq
The virtual and physical scattering configurations are connected by the so-called {\it push-forward} via the (blowup) transformation $F_\delta$ in \eqref{eq:transw}. To that end, we next introduce the push-forward of EM mediums. Let $m$ and $m_\delta$, respectively, denote the physical and virtual parameter tensors, where $m=\varepsilon, \mu$ or $\sigma$. Define the push-forward $(F_\delta)_*m_\delta$ as
\begin{equation}\label{eq:pfl}
m=(F_\delta)_* m_\delta:=\left(\frac{1}{\mbox{det}(DF_\delta)}(DF_\delta)\cdot m_\delta\cdot (DF_\delta)^{T}\right)\circ F_\delta^{-1},
\end{equation}
where $DF_\delta$ denotes the Jacobian matrix of the transformation $F_\delta$. Throughout the rest of our study, we assume that
\begin{equation}\label{eq:pfl2}
\{\mathbb{R}^3; \varepsilon,\mu,\sigma\}=(F_\delta)_*\{\mathbb{R}^3; \varepsilon_\delta,\mu_\delta,\sigma_\delta\}:=\{\mathbb{R}^3; (F_\delta)_*\varepsilon_\delta,(F_\delta)_*\mu_\delta,(F_\delta)_*\sigma_\delta\}.
\end{equation}

Next, we consider the time-harmonic EM wave scattering in the physical space. Let
\begin{equation}\label{eq:emp1}
\mathbf{E}^i(\mathbf{x}):=\mathbf{p} e^{i\omega \mathbf{x}\cdot \mathbf{d}},\quad \mathbf{H}^i:=\frac{1}{\omega}(\nabla\times \mathbf{E}^i)(\mathbf{x}), \ \ \mathbf{x}\in\mathbb{R}^3,
\end{equation}
where $\mathbf{p}\in\mathbb{R}^3\backslash\{0\}$, $\mathbf{d}\in\mathbb{S}^2$ and $\omega\in\mathbb{R}_+$. $(\mathbf{E}^i, \mathbf{H}^i)$ in \eqref{eq:emp1} is called a pair of EM plane waves with $\mathbf{E}^i$ the electric field and $\mathbf{H}^i$ the magnetic field. $\mathbf{p}$ is the polarization tensor, $\mathbf{d}$ is the incident direction and $\omega$ is the wavenumber of the plane waves $\mathbf{E}^i$ and $\mathbf{H}^i$. There always holds that
\begin{equation}\label{eq:pdo}
\mathbf{p}\perp\mathbf{d}, \quad \mbox{namely}\quad \mathbf{p}\cdot\mathbf{d}=0.
\end{equation}
$\mathbf{E}^i$ and $\mathbf{H}^i$ are entire solutions to the following Maxwell
equations
\begin{equation*}
 \left \{
 \begin{array}{ll}
\nabla\times{\bE^i}-i\omega\mu_0{\bH^i}=0  &\mbox{in} \quad \RR^3,\\
\nabla\times{\bH^i}+i\omega\varepsilon_0 {\bE^i}=0 &\mbox{in} \quad \RR^3,
 \end{array}
 \right .
 \end{equation*}
 The EM scattering in the physical space $\{\mathbb{R}^3;\varepsilon,\mu,\sigma\}$ due to the incident plane waves $(\mathbf{E}^i, \mathbf{H}^i)$ is described by the following Maxwell system
 \begin{equation}\label{eq:pss}
\left \{
 \begin{array}{ll}
\nabla\times{\bE}-i\omega\mu{\bH}=0  &\mbox{in} \quad \RR^3,\\
\nabla\times{\bH}+i\omega(\varepsilon+i\frac{\sigma}{\omega}) {\bE}=0 &\mbox{in} \quad \RR^3,
 \end{array}
 \right .
 \end{equation}
 subject to the Silver-M\"{u}ller radiation condition:
\beq\label{eq:radia2}
\lim_{\|\Bx\|\rightarrow\infty} \|\Bx\| \big( (\bH- \bH^i) \times\hat{\Bx}- (\bE-\bE^i)\big)=0,
\eeq
 where $\hat{\Bx} = \Bx/\|\Bx\|$ for $\mathbf{x}\in\mathbb{R}^3\backslash\{0\}$. We seek solutions $\mathbf{E}, \mathbf{H}\in H_{loc}(\mbox{curl}; \mathbb{R}^3)$ to \eqref{eq:pss}; see \cite{HaL,Lei1,Lei2,Ned} for the well-posedness of the scattering system \eqref{eq:pss}. Here and also in what follows, we shall often use the spaces
\[
H_{loc}(\mbox{curl}; X)=\{ U|_B\in H(\mbox{curl}; B);\ B\ \ \mbox{is any bounded subdomain of $X$} \}
\]
and
\[
H(\mbox{curl}; B)=\{ U\in (L^2(B))^3;\ \nabla\times U\in (L^2(B))^3 \}.
\]

It is known that the solution $\bE$ to \eqref{eq:pss} admits the following asymptotic expansion as $\|\Bx\|\rightarrow \infty$ (see, e.g., \cite{CK})
\beq\label{eq:farfieldptt1}
\bE(\mathbf{x})-\bE^i(\mathbf{x}) = \frac{e^{i \omega\|\Bx\|}}{\|\Bx\|}\mathbf{A}_\infty\Big(\frac{\Bx}{\|\Bx\|}; \bE^i\Big) +\Ocal\left(\frac{1}{\|\Bx\|^2}\right),
\eeq
where
\begin{equation}\label{eq:ffp}
\mathbf{A}_\infty(\hat{\mathbf{x}}; \mathbf{p}, \mathbf{d}):=\mathbf{A}_\infty\left(\frac{\Bx}{\|\Bx\|}; \bE^i\right)
\end{equation}
is known as the \emph{scattering amplitude} and $\hat{\mathbf{x}}$ denotes the direction of observation. It is readily verified that
\begin{equation}\label{eq:ppll}
\mathbf{A}_\infty(\hat{\mathbf{x}}; \mathbf{p}, \mathbf{d})=\|\mathbf{p}\|\mathbf{A}_\infty\left(\hat\Bx;\frac{\mathbf{p}}{\|\mathbf{p}\|}, \mathbf{d}\right).
\end{equation}
Therefore, without loss of generality and throughout the rest of our study, we shall assume that $\|\mathbf{p}\|=1$, namely, $\mathbf{p}\in\mathbb{S}^2$.
\begin{defn}\label{def:cloak}
Let $\Sigma_p\subset\mathbb{S}^2$, $\Sigma_d\subset\mathbb{S}^2$ and $\Sigma_{\hat{x}}\subset\mathbb{S}^2$. $\{\Omega; \varepsilon,\mu,\sigma\}$ is said to be a near/approximate-cloak if
\begin{equation}\label{eq:defcloak1}
\|\mathbf{A}_\infty(\hat{\mathbf{x}};\mathbf{p},\mathbf{d})\|\ll 1\ \ \mbox{for}\ \ \hat{\mathbf{x}}\in\Sigma_{\hat{x}},\ \mathbf{p}\in\Sigma_p,\ \mathbf{d}\in\Sigma_d.
\end{equation}
If $\Sigma_p=\Sigma_d=\Sigma_{\hat x}=\mathbb{S}^2$, then it is called an approximate full-cloak, otherwise it is called an approximate partial-cloak.
\end{defn}
According to Definition~\ref{def:cloak}, the cloaking layer $\{\Omega\backslash\overline{\Omega}_c; \varepsilon_c^*,\mu_c^*,\sigma_c^*\}$ together with the conducting layer $\{\Omega_c\backslash\overline{\Omega}_a; \varepsilon_l^*,\mu_l^*,\sigma_l^*\}$ makes the target EM object $\{\Omega_a; \varepsilon_a^*,\mu_a^*, \sigma_a^*\}$ nearly invisible to detecting waves \eqref{eq:emp1} with $\mathbf{d}\in \Sigma_d$ and $\mathbf{p}\in\Sigma_p$, and observation in the aperture $\Sigma_{\hat{x}}$. $\Sigma_d$ and ${\Sigma}_{\hat{x}}$ shall be referred to as, respectively, the apertures of incidence and observation of the partial-cloaking device. For practical considerations, throughout the current study, we assume that the cloaking device is not object-dependent; that is, the cloaked content $\{\Omega_a; \varepsilon_a^*,\mu_a^*, \sigma_a^*\}$ is {\it arbitrary but regular}, namely, $\varepsilon_a^*,\mu_a^*, \sigma_a^*$ are all arbitrary symmetric positive definite matrices. In Definition~\ref{def:cloak}, \eqref{eq:defcloak1} is rather qualitative, and in the subsequent study, we shall quantify the near-cloaking effect and derive sharp estimate in assessing the cloaking performance. To that end, the following theorem plays a critical role (cf. \cite{BL,BLZ}).

\begin{thm}\label{thm:transop}
Let $(\mathbf{E},\mathbf{H})\in H_{loc}(\mbox{\emph{curl}}; \mathbb{R}^3)^2$ be the (unique) pair of solutions to \eqref{eq:pss}. Define the \emph{pull-back fields} by
\[
\mathbf{E}_\delta=({F}_\delta)^* \mathbf{E}:=(D{F}_\delta)^{T}\mathbf{E}\circ {F}_{\delta},\quad \mathbf{H}_\delta=({F}_\delta)^* \mathbf{H}:=(D{F}_\delta)^{T}\mathbf{H}\circ {F}_{\delta}.
\]
Then the pull-back fields $(\mathbf{E}_\delta, \mathbf{H}_\delta)\in H_{loc}(\mbox{\emph{curl}};\mathbb{R}^3)^2$ satisfy the following Maxwell equations
\begin{equation}
\label{eq:sys1} \left\{
\begin{array}{ll}
 \nabla \times \bE_\delta - i \omega \mu_0 \bH_\delta=0 &  \mbox{in} \quad \RR^3\setminus \overline{D}_\delta, \\
 \nabla  \times \bH_\delta+i \omega \varepsilon_0 \bE_\delta=0 & \mbox{in} \quad \RR^3\setminus \overline{D}_\delta, \\
 \nabla \times \bE_\delta -i \omega \mu_\delta \bH_\delta=0 &  \mbox{in} \quad D_\delta, \\
 \nabla  \times \bH_\delta+ i \omega \Big(\varepsilon_\delta+i\frac{\Gs_\delta}{\omega}\Big) \bE_\delta =0 & \mbox{in} \quad D_\delta
\end{array}
\right.
\end{equation}  subject to the Silver-M\"{u}ller radiation condition:
\beq\label{eq:radia3}
\lim_{\|\Bx\|\rightarrow\infty} \|\Bx\| \big((\bH_\delta- \bH^i) \times\hat{\Bx}- (\bE_\delta-\bE^i)\big)=0,
\eeq
Particularly, since $F_\delta=\mbox{\emph{Identity}}$ in $\mathbb{R}^3\backslash\Omega$, one has that
\begin{equation}\label{eq:eqvp}
\mathbf{A}_\infty(\hat{\mathbf{x}}; \mathbf{E}^i)=\mathbf{A}^\delta_\infty(\hat{\mathbf{x}}; \mathbf{E}^i),\quad \hat{\mathbf{x}}\in\mathbb{S}^2,
\end{equation}
where $\mathbf{A}^\delta_\infty(\hat{\mathbf{x}}; \mathbf{E}^i)$ denotes the scattering amplitude corresponding to the Maxwell system \eqref{eq:sys1}.
\end{thm}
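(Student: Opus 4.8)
The plan is to prove Theorem~\ref{thm:transop} as a direct application of the transformation (invariance) properties of the Maxwell system under bi-Lipschitz, orientation-preserving pushforwards, together with the explicit relation \eqref{eq:pfl2} coupling the physical and virtual parameters. The key algebraic identity to verify is the following: if $F_\delta$ is as in \eqref{eq:transw} and we set $\bE_\delta = (DF_\delta)^T\bE\circ F_\delta$, $\bH_\delta = (DF_\delta)^T\bH\circ F_\delta$, then
\beq\label{eq:curlpullback}
\nabla\times \bE_\delta = \mbox{det}(DF_\delta)\,(DF_\delta)^{-1}\big[(\nabla\times\bE)\circ F_\delta\big],
\eeq
and similarly for $\bH_\delta$. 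This is the standard fact that the curl transforms covariantly: the $1$-form associated to $\bE$ pulls back to the $1$-form associated to $\bE_\delta$ under $F_\delta$, and exterior differentiation commutes with pullback, so $d(\bE\text{-form})$ pulls back to $d(\bE_\delta\text{-form})$; translating back into vector-proxy notation on $\mathbb{R}^3$ yields \eqref{eq:curlpullback}. I would first record this identity (for bi-Lipschitz maps it holds a.e., and the Sobolev regularity is preserved because $DF_\delta\in L^\infty$ with bounded inverse), then combine it with \eqref{eq:pfl}–\eqref{eq:pfl2}.

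The main computation then proceeds region by region. In $\mathbb{R}^3\setminus\Omega$ we have $F_\delta = \mbox{Identity}$, so $DF_\delta = \mathbf{I}_{3\times 3}$, $\bE_\delta = \bE$, $\bH_\delta = \bH$, and the first two equations of \eqref{eq:sys1} reduce to the first two equations of \eqref{eq:pss} with $\varepsilon_0,\mu_0$; moreover \eqref{eq:radia3} is literally \eqref{eq:radia2}. In $\Omega\setminus\overline{D}_\delta$, by \eqref{eq:transw} the image is $\Omega\setminus\overline{\Omega}_c$, where the physical parameters are $\varepsilon_c^*,\mu_c^*,\sigma_c^*$; substituting \eqref{eq:curlpullback} into \eqref{eq:pss} and using \eqref{eq:pfl2} to rewrite $m^*\circ F_\delta = \frac{1}{\mbox{det}(DF_\delta)}(DF_\delta)\, m_\delta\, (DF_\delta)^T$ (with $m_\delta = \varepsilon_0$ or $\mu_0$ there, by \eqref{eq:struc}, since $D_\delta\setminus\overline{D}_{\delta/2}$ maps to $\Omega_c\setminus\overline{\Omega}_a$ — I need to be careful about which subregion is which), the Jacobian factors $\mbox{det}(DF_\delta)(DF_\delta)^{-1}$ from the curl cancel against $\frac{1}{\mbox{det}(DF_\delta)}(DF_\delta)\cdot(DF_\delta)^T$ after multiplying by $(DF_\delta)^T\circ F_\delta^{-1}$, leaving exactly $\nabla\times\bE_\delta - i\omega\mu_\delta\bH_\delta = 0$ and $\nabla\times\bH_\delta + i\omega(\varepsilon_\delta + i\sigma_\delta/\omega)\bH_\delta = 0$. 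Doing this in $D_\delta\setminus\overline{D}_{\delta/2}$ and $D_{\delta/2}$ gives the last two equations of \eqref{eq:sys1} uniformly on $D_\delta$. Finally, continuity of the tangential traces of $\bE_\delta,\bH_\delta$ across the interfaces $\partial D_\delta$ and $\partial D_{\delta/2}$ follows from the continuity of the physical traces across $\partial\Omega_c,\partial\Omega_a$ together with the fact that $(DF_\delta)^T$ pullback preserves tangential components along a Lipschitz interface — this is what guarantees $(\bE_\delta,\bH_\delta)\in H_{loc}(\mbox{curl};\mathbb{R}^3)^2$ globally rather than just piecewise. The identity \eqref{eq:eqvp} is then immediate: since $F_\delta$ is the identity outside $\Omega$, the scattered fields $\bE-\bE^i$ and $\bE_\delta - \bE^i$ coincide on $\mathbb{R}^3\setminus\Omega$, hence have the same far-field expansion \eqref{eq:farfieldptt1}, so the same scattering amplitude.

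The step I expect to be the main obstacle — or at least the one requiring the most care — is the low-regularity bookkeeping: $F_\delta$ is only bi-Lipschitz, so $DF_\delta$ is merely $L^\infty$ and \eqref{eq:curlpullback} holds only almost everywhere, and one must justify that the pull-back fields lie in $H_{loc}(\mbox{curl})$ and that the transmission conditions across the (Lipschitz) interfaces $\partial\Omega$, $\partial\Omega_c$, $\partial\Omega_a$ pull back correctly. The clean way to handle this is to work with the weak formulation: $(\bE,\bH)\in H_{loc}(\mbox{curl};\mathbb{R}^3)^2$ solves \eqref{eq:pss} iff $\int \bE\cdot(\nabla\times\bphi) - i\omega\mu\bH\cdot\bphi = 0$ for all test fields $\bphi$, and under the change of variables $\bx = F_\delta(\by)$ with $\bphi = (DF_\delta)^{-T}\bpsi\circ F_\delta^{-1}$ this integral transforms into the weak formulation of \eqref{eq:sys1} — the Jacobian of the change of variables $|\mbox{det}\,DF_\delta|$ (positive since $F_\delta$ is orientation-preserving, so absolute values may be dropped) combines with the matrix factors exactly as in \eqref{eq:pfl}. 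This weak-formulation argument simultaneously delivers the PDE in each region and the global $H(\mbox{curl})$ membership with correct interface conditions, avoiding any need to treat the singular/interface sets separately. Since this is essentially the EM analogue of the well-documented transformation law (cf.\ \cite{GKLU3,LZ1,BL,BLZ}), I would present the pointwise computation \eqref{eq:curlpullback}–region-by-region for transparency and remark that the weak formulation legitimizes it under the stated bi-Lipschitz hypothesis.
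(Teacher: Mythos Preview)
Your approach is correct and is essentially the standard transformation-optics argument. Note, however, that the paper does not actually supply a proof of this theorem: it states the result and cites \cite{BL,BLZ} (see the sentence immediately preceding the theorem). So there is no ``paper's own proof'' to compare against; your write-up is precisely the kind of argument one finds in those references, built on the covariant transformation law for the curl (equivalently, commutation of pullback with exterior derivative on $1$-forms) together with the pushforward definition \eqref{eq:pfl}--\eqref{eq:pfl2} of the medium parameters. Your observation that the weak formulation is the cleanest way to handle the bi-Lipschitz regularity and the interface transmission conditions is exactly right and is how \cite{BL,BLZ} proceed. One small typo: in your region-by-region computation the second transformed equation should end with $\bE_\delta$, not $\bH_\delta$.
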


Hence, by Theorem~\ref{thm:transop}, in order to assess the cloaking performance of the cloaking device $\{\Omega;\varepsilon,\mu,\sigma\}$ in \eqref{eq:clkstruc1} associated with the physical scattering system \eqref{eq:pss}--\eqref{eq:radia2}, it suffices for us to investigate the virtual scattering system \eqref{eq:sys1}--\eqref{eq:radia3}. Here, it is noted for emphasis that by \eqref{eq:pfl2}, one has
\begin{equation}\label{eq:pfinv}
\{D_{\delta/2}; \varepsilon_a,\mu_a,\sigma_a\}=(F_\delta^{-1})_*\{\Omega_a;\varepsilon_a^*,\mu_a^*,\sigma_a^*\}.
\end{equation}
Since $\{\Omega_a;\varepsilon_a^*,\mu_a^*,\sigma_a^*\}$ is arbitrary and regular and $F_\delta$ is bi-Lipschitz and orientation-preserving, it is clear that $\{D_{\delta/2}; \varepsilon_a,\mu_a,\sigma_a\}$ is also arbitrary and regular.

In summarizing our discussion so far, in order to construct a near-cloaking device, one needs to firstly select a suitable generating set $\Gamma_0$ and the corresponding virtual domain $D_\delta$; and secondly the blowup transformation $F_\delta$ in \eqref{eq:transw}; and thirdly the virtual conducting layer $\{D_{\delta}\backslash\overline{D}_{\delta/2}; \varepsilon_l,\mu_l,\sigma_l\}$; and finally assess the cloaking performance by studying the virtual scattering system \eqref{eq:sys1}--\eqref{eq:radia3}. In this article, we shall be mainly concerned with extending the full- and partial-cloaking schemes proposed in \cite{DLU15,LiLiuRonUhl} for acoustic waves to the more challenging case with electromagnetic waves. There the generating sets are either a generic curve or a planar subset, and the blowup transformations are constructed via a concatenating technique. Hence, in the current study, we shall mainly focus on properly designing the suitable conducting layers and then assessing the cloaking performances by studying the corresponding virtual scattering system \eqref{eq:sys1}--\eqref{eq:radia3}.

 The rest of the paper is organized as follows. In Section 2, we collect some preliminary knowledge on boundary layer potentials, which shall be used throughout our study. Sections 3 and 4 are, respectively, devoted to the study of the full- and partial-cloaking schemes.

\section{Boundary layer potentials}\label{sect:3}

Our study shall heavily rely on the vectorial boundary integral operators for Maxwell's equations. In this section, we review some of the important properties of the vectorial boundary integral operators for the later use.

\subsection{Definitions}

Let $D$ be a bounded domain in $\mathbb{R}^3$ with a $C^3$-smooth boundary $\partial D$ and a connected complement $\mathbb{R}^3\backslash\overline{D}$. Let  $\nabla_{\p D}\cdot$ denote the surface divergence on $\partial D$ and
$\mathrm{H}^s(\partial D)$ be the usual Sobolev space of order $s\in\mathbb{R}$ on $\partial D$.
Let $\nu$ be the exterior unit normal vector to $\partial D$ and denote by $\mathrm{TH}^s(\p D):=\{\ba \in {\mathrm{H}^s(\p D)}^3; \nu\cdot \ba =0\}$, the space of
vectors tangential to $\p D$ which is a subset of $\mathrm{H}^s(\p D)^3$. We also introduce the function
space
\begin{align*}
\mathrm{TH}_{\mathrm{div}}^s(\p D):&=\Bigr\{ {\ba } \in \mathrm{TH}^s(\partial D);
\nabla_{\partial D}\cdot {\ba } \in \mathrm{H}^s(\partial D) \Bigr\},
\end{align*}
endowed with the norm
\begin{align*}
&\|{\ba }\|_{\mathrm{TH}_{\mathrm{div}}^s(\p D)}=\|{\ba }\|_{\mathrm{TH}^s(\p  D)}+\|\nabla_{\p  D}\cdot {\ba }\|_{\mathrm{H}^s(\p  D)}.
\end{align*}

Next, we recall that, for $\omega\in\mathbb{R}_+\cup\{0\}$, the fundamental outgoing solution $ G_\omega$ to the PDO $(\Delta+\omega^2)$ in $\mathbb{R}^3$ is given by
\begin{equation}\label{Gk} \ds G_\omega
(\Bx) = -\frac{e^{i\omega \|\Bx\|}}{4 \pi \|\Bx\|}.
 \end{equation}
In what follows, if $\omega=0$ we simply write $G_\omega$ as $G$.

For a density function $\ba  \in \mathrm{TH}_{\mathrm{div}}^s(\p D)$, we define the
vectorial single layer potential associated with the fundamental solution
$ G_\omega$ introduced in (\ref{Gk}) by
\begin{equation}\label{defA}
\ds\mathcal{A}_D^{\omega}[\ba ](\Bx) := \int_{\p  D}  G_\omega(\Bx-\By)
\ba (\By)\, d \sigma_y, \quad \Bx \in \mathbb{R}^3.
\end{equation}
For a scalar density $\varphi \in \mathrm{H}^{s}(\p  D)$, the single layer
potential is defined similarly by
\begin{equation}\label{defS}
\mathcal{S}_D^{\omega}[\varphi](\Bx) := \int_{\p  D}  G_\omega(\Bx-\By) \varphi(\By)\, d \sigma_y, \quad \Bx \in \mathbb{R}^3.
\end{equation}
The following boundary operator shall also be needed
\begin{equation}\label{defM}
\begin{aligned} \mathcal{M}_D^\omega:
\mathrm{L}_T^2 (\partial D)  & \longrightarrow \mathrm{L}_T^2 (\partial D)  \\
\ba  & \longrightarrow \mathcal{M}^\omega_D[\ba ](\Bx)= \mathrm{p.v. }\quad \nu_{\mathbf{x}}  \times \nabla \times \int_{\p  D}  G_\omega(\Bx,\By) \ba (\By)\, d\sigma_y,
\end{aligned}
\end{equation}
where $\mathrm{L}_T^2(\partial D):=\mathrm{TH}^0(\partial D)$, and p.v. signifies the Cauchy principle value.
In what follows, we denote by $\mathcal{A}_D$, $\mathcal{S}_D$ and $\mathcal{M}_D$ the operators $\mathcal{A}_D^0$, $\mathcal{S}_D^0$ and $\mathcal{M}_D^0$, respectively.

\subsection{Boundary integral  identities}
Here and throughout the rest of the paper, we make use of the following notation: for a function $u$ defined on $\mathbb{R}^3\backslash\partial D$, we denote
\[
u|_{\pm}(\Bx)=\lim_{\tau\rightarrow +0} u(\Bx\pm \tau\nu(\Bx)),\quad \Bx\in\partial D,
\]
and
\[
\frac{\partial u}{\partial\nu}\bigg|_\pm(\Bx)=\lim_{\tau\rightarrow +0}\langle \nabla_\Bx u(\Bx\pm \tau\nu(\Bx)),\nu(\Bx)\rangle,\quad \Bx\in\partial D,
\]
if the limits exist, where $\mathbf\nu$ is the unit outward normal vector to $\p D$.

It is known that the single layer potential $\Scal_D^\omega$
satisfies the trace formula (cf. \cite{CK,Ned})
\beq \label{eq:trace}
\frac{\p}{\p\nu}\Scal_{D}^\omega[\varphi] \Big|_{\pm} = \left(\pm \frac{1}{2}I+
(\Kcal_{D}^\omega)^*\right)[\varphi] \quad \mbox{on } \p D,
\eeq
where $(\Kcal_{D}^\omega)^*$ is the $L^2$-adjoint of $\Kcal_D^\omega$ and
$$
\Kcal_D^\omega [\ba ]:= \mbox{p.v.} \quad \int_{\p D} \frac{\p G_\omega(\Bx-\By)}{\p \nu(\By)} \varphi(\By)\, d\Gs_y , \quad \Bx\in \p D.
$$
The jump relations in the following proposition are also known (see \cite{CK,Ned}).
\begin{prop}\label{propjumpM}

Let $ \ba  \in \mathrm{TH}_{\mathrm{div}}^{-1/2}(\p  D)$. Then $\mathcal{A}_D^\omega[\ba ]$ is continuous on $\mathbb{R}^3$ and its \emph{curl} satisfies the following jump formula,
\begin{equation}\label{jumpM}
\nu \times \nabla \times \mathcal{A}_D^\omega[\ba ]\big\vert_\pm = \mp \frac{\ba }{2} + \mathcal{M}_D^\omega[\ba ] \quad \mbox{ on } \p  D,
\end{equation}
where
\begin{equation*}
 \nu(\mathbf{x}) \times \nabla \times \mathcal{A}_D^\omega[\ba ]\big\vert_\pm (\mathbf{x})= \lim_{t\rightarrow +0} \nu(\mathbf{x}) \times \nabla \times \mathcal{A}_D^\omega[\ba ] (\mathbf{x}\pm t \nu(\mathbf{x})),\quad \forall \mathbf{x}\in \p  D,
\end{equation*}
\end{prop}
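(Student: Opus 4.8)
The plan is to obtain the continuity of $\mathcal{A}_D^\omega[\ba]$ from the scalar theory, and then to deduce the curl jump \eqref{jumpM} from the classical trace formula \eqref{eq:trace} by rewriting $\nu\times\nabla\times\mathcal{A}_D^\omega[\ba]$ through a vector triple-product identity. For the continuity, I would note that, componentwise in Cartesian coordinates, $\mathcal{A}_D^\omega[\ba]=\big(\mathcal{S}_D^\omega[\ba_1],\mathcal{S}_D^\omega[\ba_2],\mathcal{S}_D^\omega[\ba_3]\big)$, so that the assertion reduces to the well-known continuity across $\p D$ of the scalar single layer potential $\mathcal{S}_D^\omega$ on $\mathrm{H}^{-1/2}(\p D)$ (its interior and exterior traces coincide), together with the fact that $\mathcal{A}_D^\omega[\ba]$ is smooth off $\p D$; see \cite{CK,Ned}.

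For the curl, differentiating under the integral (licit for $\Bx\notin\p D$, where the kernel is smooth) gives $\nabla\times\mathcal{A}_D^\omega[\ba](\Bx)=\int_{\p D}\nabla_\Bx G_\omega(\Bx-\By)\times\ba(\By)\,d\sigma_y$. Fixing $\Bx_0\in\p D$ and using $\Bu\times(\Bv\times\Bw)=\Bv(\Bu\cdot\Bw)-\Bw(\Bu\cdot\Bv)$ with $\Bu=\nu(\Bx_0)$, I would split
\[
\nu(\Bx_0)\times\nabla\times\mathcal{A}_D^\omega[\ba](\Bx)=\underbrace{\int_{\p D}\big(\nu(\Bx_0)\cdot\ba(\By)\big)\nabla_\Bx G_\omega(\Bx-\By)\,d\sigma_y}_{=:I(\Bx)}-\underbrace{\int_{\p D}\ba(\By)\,\frac{\partial G_\omega(\Bx-\By)}{\partial\nu(\Bx_0)}\,d\sigma_y}_{=:II(\Bx)}.
\]
Here $I(\Bx)=\nabla_\Bx\mathcal{S}_D^\omega[\beta](\Bx)$ with the scalar density $\beta(\By):=\nu(\Bx_0)\cdot\ba(\By)$; since $\ba$ is tangential one has $\beta(\Bx_0)=0$, and as the only jump of the gradient of a scalar single layer potential is in its normal component with coefficient proportional to $\beta(\Bx_0)\nu(\Bx_0)=0$, the field $I$ has matching one-sided limits at $\Bx_0$, equal to the principal value of the integral. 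For $II$, applying \eqref{eq:trace} componentwise to $II_j(\Bx)=\nu(\Bx_0)\cdot\nabla_\Bx\mathcal{S}_D^\omega[\ba_j](\Bx)$ gives $\lim_{\tau\to+0}II(\Bx_0\pm\tau\nu(\Bx_0))=\pm\tfrac12\ba(\Bx_0)+(\mathcal{K}_D^\omega)^*[\ba](\Bx_0)$. Subtracting, and observing that the remaining principal-value integrals reassemble, via the same triple-product identity applied to \eqref{defM}, into $\mathcal{M}_D^\omega[\ba](\Bx_0)=\mathrm{p.v.}\int_{\p D}(\nu(\Bx_0)\cdot\ba(\By))\nabla_\Bx G_\omega(\Bx_0-\By)\,d\sigma_y-(\mathcal{K}_D^\omega)^*[\ba](\Bx_0)$, one arrives at $\nu\times\nabla\times\mathcal{A}_D^\omega[\ba]|_\pm=\mp\tfrac12\ba+\mathcal{M}_D^\omega[\ba]$ on $\p D$, which is \eqref{jumpM}.

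Two matters require care, and the second is the genuine obstacle. The pointwise computation above, and the very existence of the principal values, presuppose regularity; I would first run the argument for H\"older-continuous tangential densities (where the $C^3$-regularity of $\p D$ makes the strongly singular part of $\nabla_\Bx G_\omega$ a true Cauchy principal value, and where the $\omega$-dependent part of the kernel is only weakly singular and contributes no jump, so the jump analysis reduces to the static kernel $G$), and only then pass to a general $\ba\in\mathrm{TH}_{\mathrm{div}}^{-1/2}(\p D)$ by density. Making that density argument rigorous is the technical heart: one needs the boundedness of $\mathcal{A}_D^\omega$ from $\mathrm{TH}_{\mathrm{div}}^{-1/2}(\p D)$ into $H_{loc}(\mathrm{curl};\mathbb{R}^3)$ on each side of $\p D$ — so that the tangential traces of its curl are well defined and depend continuously on $\ba$ — together with the boundedness of $\mathcal{M}_D^\omega$ on $\mathrm{TH}_{\mathrm{div}}^{-1/2}(\p D)$. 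These rest on the identity $\nabla\cdot\mathcal{A}_D^\omega[\ba]=\mathcal{S}_D^\omega[\nabla_{\p D}\cdot\ba]$ (this is exactly where the surface-divergence constraint on $\ba$ enters) and on the sharp mapping properties of $\mathcal{S}_D^\omega$; being classical, I would invoke \cite{CK,Ned} for them rather than reprove them, and present only the computation above.
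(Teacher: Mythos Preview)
Your argument is correct and is essentially the classical derivation: split $\nu\times(\nabla G_\omega\times\ba)$ via the triple-product identity, observe that the tangentiality of $\ba$ kills the jump of the first term at $\Bx_0$, and read off the jump of the second term componentwise from the scalar trace formula~\eqref{eq:trace}, with the principal-value integrals reassembling into $\mathcal{M}_D^\omega$. Your caveats about first working with regular densities and then passing to $\mathrm{TH}_{\mathrm{div}}^{-1/2}(\p D)$ by density, using the $H_{loc}(\mathrm{curl})$ mapping property of $\mathcal{A}_D^\omega$ and the identity $\nabla\cdot\mathcal{A}_D^\omega[\ba]=\mathcal{S}_D^\omega[\nabla_{\p D}\cdot\ba]$, are exactly the right ones.

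The only comparison to make is that the paper itself offers no proof: Proposition~\ref{propjumpM} is stated as a known result with a bare citation to \cite{CK,Ned}. Your write-up therefore supplies strictly more than the paper does, and in fact follows the line of argument one finds in those references (reduction to the scalar single-layer jump plus a careful accounting of the principal value). There is nothing to correct.
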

Equipped with the above knowledge, the solution pair $(\bE_\delta, \bH_\delta)$ in $\RR^3\setminus\overline{D}_\delta$ to \eqnref{eq:sys1} can be represented using the following integral ansatz,
\begin{align}
&\bE_\delta(\Bx)= \bE^i(\Bx) + \nabla_{\mathbf{x}}\times \Acal_{D_\delta}^{\omega}[\ba](\Bx), & \Bx\in \RR^3\setminus\overline{D}_\delta,\label{eq:vis1}\\
&\bH_\delta(\Bx)=\frac{1}{i\omega}\nabla_{\mathbf{x}}\times\bE_\delta(\Bx)=\bH^i(\Bx) + \frac{1}{i\omega}\nabla_{\mathbf{x}}\times\nabla_{\mathbf{x}}
\times \Acal_{D_\delta}^{\omega}[\ba](\Bx), & \Bx\in \RR^3\setminus\overline{D}_\delta, \label{eq:vis2}
\end{align}
where by \eqnref{jumpM} the vectorial density function $\ba\in {\mathrm{TH}}^{-1/2}_{\mathrm{div}}(\p D_\delta)$ satisfies
\beq\label{eq:trans1}
\Big(-\frac{I}{2}+\Mcal_{D_\delta}^{\omega}\Big)[\ba](\Bx)=\mathbf\nu\times (\bE_\delta-\bE^i)(\Bx)\Big|_+, \quad \By\in \p D_\delta.
\eeq
%It is well known that the solution $\bE_\delta$ admits the following asymptotic expansion as $|\Bx|\rightarrow \infty$ (see, e.g., \cite{CK})
%\beq\label{eq:farfieldpt1}
%\bE_{\delta}-\bE^i = \frac{e^{i \omega|\Bx|}}{|\Bx|}\mathbf{A}^{\delta}_\infty\Big(\frac{\Bx}{|\Bx|}; \bE^i\Big) +\Ocal(\frac{1}{|\Bx|^2})
%\eeq
%where $\mathbf{A}^{\delta}_\infty(\Bx/|\Bx|; \bE^i)$ is known as the \emph{scattering amplitude}.

\section{Regularized full-cloaking of EM waves}\label{sect:4}

In this section, we consider a regularized full-cloaking scheme of the EM waves by taking the generating set to be a generic curve. As discussed in the introduction, this scheme was considered in our earlier work \cite{DLU15} for acoustic waves. For self-containedness, we
briefly discuss the generating set $\Gamma_0$ and the virtual domain $D_\delta$ for the proposed cloaking scheme in the sequel, which can also be found in \cite{DLU15}. Let $\GG_0$ be a smooth simple and non-closed curve in $\RR^3$ with two endpoints, denoted by $P_0$ and $Q_0$, respectively. Denote by $N(\Bx)$ the normal plane of the curve $\GG_0$ at $\Bx\in\GG_0$. We note that $N(P_0)$ and $N(Q_0)$ are, respectively, defined by the left and right limits along $\Gamma_0$. Let $q\in\mathbb{R}_+$. For any $\Bx\in\GG_0$, we let $\mathscr{S}_q(\Bx)$ denote the disk lying on $N(\Bx)$, centered at $\Bx$ and of radius $q$. It is assumed that there exists $q_0\in\mathbb{R}_+$ such that when $q\leq q_0$, $\mathscr{S}_q(\Bx)$ intersects $\GG_0$ only at $\Bx$. We let $D_q^f$ be given as
\begin{equation}\label{eq:drf}
D_q^f:=\mathscr{S}_q(\Bx)\times \Gamma_0(\Bx),\ \Bx\in\overline{\Gamma}_0,
\end{equation}
where $\Gamma_0$ is identified with its parametric representation $\Gamma_0(\Bx)$; see Fig.~\ref{fig1} for a schematic illustration. Clearly, the facade of $D_q^f$, denoted by $S_q^f$ and parallel to $\GG_0$, is given by
\beq\label{eq:facade}
S_q^f:=\{\Bx+q\cdot \mathbf{n}(\Bx); \Bx\in \GG_0, \mathbf{n}(\Bx)\in N(\Bx) \cap \mathbb{S}^2\},
\eeq
and the two end-surfaces of $D_q^f$ are the two disks $\mathscr{S}_q(P_0)$ and $\mathscr{S}_q(Q_0)$. Let $D_{q_0}^a$ and $D_{q_0}^b$ be two simply connected sets with $\partial D_{q_0}^a=S_{q_0}^a\cup \mathscr{S}_{q_0}(P_0)$ and $\partial D_{q_0}^b=S_{q_0}^b\cup\mathscr{S}_{q_0}(Q_0)$, such that $S_{q_0}:=S^f_{q_0}\cup S^b_{q_0} \cup S^a_{q_0}$ is a $C^3$-smooth boundary of the domain $D_{q_0}:=D_{q_0}^a\cup D_{q_0}^f\cup D_{q_0}^b$. For $0<q<q_0$, we set
\[
D_q^a:=\frac{q}{q_0}(D_{q_0}^a-P_0)+P_0=\left\{\frac{q}{q_0}\cdot(\Bx-P_0)+P_0; \Bx\in D_{q_0}^a\right\},
\]
and similarly, $D_q^b:={q}/{q_0}\cdot(D_{q_0}^b-Q_0)+Q_0$. Let $S_q^a$ and $S_q^b$, respectively, denote the boundaries of $D_q^a$ and $D_q^b$ excluding $\mathscr{P}_q^a$ and $\mathscr{S}_q^b$. Now, we set $D_q:=D_q^a\cup D_q^f\cup D_q^b$, and $S_q:=S^f_q\cup S^b_q \cup S^a_q=\partial D_q$.
\begin{figure}%[h]
\begin{center}
  \includegraphics[width=4.0in,height=2.0in]{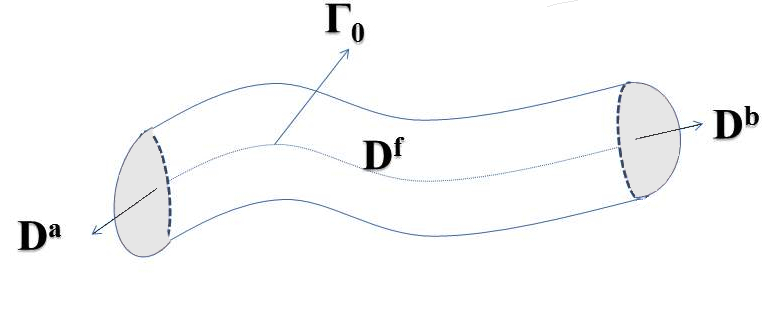}
  \end{center}
  \caption{Schematic illustration of the domain $D_q$ for the regularized full-cloak. \label{fig1}}
\end{figure}

Henceforth, we we let $\delta\in\mathbb{R}_+$ be the asymptotically small regularization parameter and let $D_\delta$ denote the virtual domain used for the blowup construction of the cloaking device. We also let
\beq\label{eq:rgn1}
S_\delta:=S^f_\delta \cup S^b_\delta \cup S^a_\delta,
\eeq
denote the boundary surface of the virtual domain $D_\delta$. Without loss of generality, we assume that $q_0\equiv 1$. We shall drop the dependence on $q$ if one takes $q=1$. For example, $D$ and $S$ denote, respectively, $D_q$ and $S_q$ with $q=1$. It is remarked that in all of our subsequent arguments, $D$ can always be replaced by $D_{\tau_0}$ with $0<\tau_0\leq q_0$ being a fixed number.

In what follows, if we utilize $\Bz$ to denote the space variable on $\GG_0$ , then for every $\By\in D_q^f$, we define a new variable $\Bz_{y}\in \GG_0$ which is the projection of $\By$ onto $\GG_0$.
Meanwhile, if $\By$ belongs to $D_q^a$ (respectively $D_q^b$), then $\Bz_y$ is defined to be $P_0$ (respectively $Q_0$).
Henceforth, we let $\xi$ denote the arc-length parameter of $\GG_0$ and $\theta$,
which ranges from $0$ to $2\pi$, be the angle of the point on $N(\xi)$ with respect to the central point $\Bx(\xi)\in \GG_0$. Moreover, we assume that if $\theta=0$, then the corresponding points are those lying on the line that connects $\GG_0(\xi)$ to $\GG_1(\xi)$, where
$\GG_1$ is defined to be
$$
\GG_1:=\{\Bx + \mathbf{n}_1(\Bx); \Bx \in \GG_0, \mathbf{n}_1(\Bx)\in N(\Bx)\cap \mathbb{S}^2-\mbox{a fixed vector for a given $\Bx$}\}.
$$

With the above preparation, we introduce a blowup transformation which maps $\By\in\overline{D}_\delta$ to $\tilde\By\in \overline{D}$ as follows
\begin{equation}\label{eq:A}
A(\By)=\tilde\By:=\frac{1}{\delta}(\By-\Bz_y)+\Bz_y,\quad \By\in D_\delta^f,
\end{equation}
whereas
\beq\label{eq:Ares1}
A(\By)=\tilde\By:= \left\{
\begin{array}{ll}
\frac{\By-P_0}{\delta}+P_0, & \By \in D_\delta^a, \\
\frac{\By-Q_0}{\delta}+Q_0, & \By \in D_\delta^b.
\end{array}
\right.
\eeq
Next, we present the crucial design of the lossy layer in \eqnref{eq:struc}.
Define the Jacobian matrix $\BB$ by
\begin{equation}\label{eq:jacob1}
\BB(\By)=\nabla_\By A(\By),\quad \By\in\overline{D}_\delta.
\end{equation}
Set the material parameters $\varepsilon_\delta$, $\mu_\delta$ and $\Gs_\delta$
in the lossy layer $D_\delta\setminus\overline{D}_{\delta/2}$ to be
\beq\label{eq:loss1}
\begin{array}{ll}
\varepsilon_\delta(\Bx)=\varepsilon_l(\Bx):=\delta^{r}|\BB|\BB^{-1}, &
\mu_\delta(\Bx)=\mu_l(\Bx):=\delta^{s}|\BB|\BB^{-1}, \\
\Gs_\delta(\Bx)=\Gs_l(\Bx):=\delta^{t}|\BB|\BB^{-1}, & \mbox{for} \quad \Bx\in D_\delta\setminus\overline{D}_{\delta/2},
\end{array}
\eeq
where $r$, $s$ and $t$ are all real numbers and $|\cdot|$ stands for the determinant when related to a square matrix.

%It is easily seen from \eqnref{eq:jocob11} that $\BB$ is a symmetric positive definite matrix and hence $\epsilon_l$, $\mu_l$ and $\Gs_l$ are well-defined regular material tensors. The material parameters of lossy layers in real space can then be got by using the following pushing forward method
%\beq\label{eq:pushfd1}
%F_* m(\Bx):=\frac{DF(\tdx)\ti m(\tdx)DF(\tdx)^T}{|DF(\tdx)|}, \quad \Bx\in \Omega\setminus \overline{D},
%\eeq
%where $m(\Bx)=\{\epsilon, \mu, \Gs\}$ and $DF$ represents the Jacobian matrix of the push forward operator $F$.

We are now in a position to present the main theorem on the approximate full-cloak constructed by using $D_\delta$ described above as the virtual domain.
\begin{thm}\label{th:main1}
Let $D_\delta$ be as described above with $\partial D_\delta=S_\delta$ defined in \eqnref{eq:rgn1}.
Let $(\bE_\delta,\bH_\delta)$ be the pair of solutions to \eqnref{eq:sys1}, with
$\{\Omega; \varepsilon_\delta, \mu_\delta, \Gs_\delta\}\subset \{\RR^3; \varepsilon_\delta, \mu_\delta, \Gs_\delta\} $
defined in \eqnref{eq:struc}, and $\{D_\delta\backslash\overline{D}_{\delta/2}; \varepsilon_\delta, \mu_\delta, \Gs_\delta\}$ given in \eqnref{eq:loss1}. Define
$$
\beta=\min\{1, -1+r+s, -1+t+s\}, \quad \beta'=\min\{1, -2+r+s, -2+t+s\}.
$$
If $r$, $s$ and $t$ are chosen such that $\beta'-t/2\geq 0$, then there exists $\delta_0\in\mathbb{R}_+$ such that when $\delta<\delta_0$,
\beq\label{eq:ncest1}
\|\mathbf{A}^{\delta}_\infty(\hat{\Bx}; \mathbf{p}, \mathbf{d})\|\leq C (\delta^{\beta-t/2+1}+\delta^2)
\eeq
where $C$ is a positive constant depending on $\omega$ and $D$,
but independent of $\varepsilon_a$, $\mu_a$, $\Gs_a$ and $\hat{\Bx}$, $\mathbf{p}$, $\mathbf{d}$.
\end{thm}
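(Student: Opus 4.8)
The strategy is to reduce the estimate on the far-field pattern $\mathbf{A}^\delta_\infty$ to a careful analysis of the integral representation \eqref{eq:vis1}--\eqref{eq:trans1}, together with quantitative control of how the rescaled material parameters in the lossy layer force the density $\ba$ on $\partial D_\delta$ to be small. First I would set up the scattering problem \eqref{eq:sys1} in the virtual space in terms of layer potentials across $S_\delta$, $S_{\delta/2}$ and $\partial D_{\delta/2}$. The exterior field is given by the ansatz \eqref{eq:vis1}--\eqref{eq:vis2} with density $\ba\in\mathrm{TH}^{-1/2}_{\mathrm{div}}(\partial D_\delta)$, and the transmission conditions (continuity of the tangential components of $\bE_\delta$ and $\bH_\delta$ across $S_\delta$ and $S_{\delta/2}$) together with the interior Maxwell system in $D_{\delta/2}$ with the arbitrary regular parameters $\varepsilon_a,\mu_a,\sigma_a$ yield a coupled system of boundary integral equations. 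Because the far-field pattern is, up to the factor $\omega$ and a curl, $\mathbf{A}^\delta_\infty(\hat\Bx;\mathbf{p},\mathbf{d}) = c\,\hat\Bx\times\int_{\partial D_\delta} e^{-i\omega\hat\Bx\cdot\By}\,\ba(\By)\,d\sigma_y \times \hat\Bx + O(\text{lower order})$, the whole problem becomes: estimate $\|\ba\|_{\mathrm{TH}^{-1/2}_{\mathrm{div}}(\partial D_\delta)}$ (and its integral against the smooth kernel $e^{-i\omega\hat\Bx\cdot\By}$) in terms of $\delta$, $r$, $s$, $t$.

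The key mechanism is that the lossy layer, with parameters scaled by $\delta^{r}$, $\delta^{s}$, $\delta^{t}$ as in \eqref{eq:loss1}, behaves — after pulling back by the blowup map $A$ of \eqref{eq:A}--\eqref{eq:Ares1} — like a fixed (unit-scale) layer whose large conductivity $\delta^{t}$ acts as an approximate perfectly-conducting shield, decoupling the interior content from the exterior. I would make this rigorous by rescaling the integral equations on $\partial D_\delta$ to the fixed geometry $\partial D$ via $A$: the rescaling introduces powers of $\delta$ both from the Jacobian $\BB$ (which by construction of $A$ is $O(1/\delta)$ transversally and $O(1)$ along $\Gamma_0$, so $|\BB|\sim\delta^{-2}$) and from the scaled material parameters. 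Tracking these powers through the trace and jump relations \eqref{eq:trace}, \eqref{jumpM} and Proposition~\ref{propjumpM}, one obtains that the operator governing $\ba$ is, to leading order, invertible with bounded inverse (the unit-scale Maxwell layer operator $-I/2+\Mcal_D^{0}$ is Fredholm of index zero and, for generic frequency, invertible), and the right-hand side — essentially $\nu\times\bE^i$ restricted to $\partial D_\delta$ minus the correction coming from the shielded interior — is small. Collecting the exponents, the incident field contributes $O(\delta^{\beta-t/2+1})$ after accounting for the $\delta^{-t/2}$ loss in the energy estimate for the lossy region and the $\delta^{+1}$ gain from the thinness (measure $\sim\delta^{2}$) of the cross-section of $D_\delta$; the $O(\delta^2)$ term is the residual low-frequency/curve-length contribution that survives even for a perfect shield, reflecting that the device occupies a tube of cross-sectional area $\sim\delta^2$ around the curve $\Gamma_0$. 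The hypothesis $\beta'-t/2\geq 0$ is exactly what guarantees the interior contribution (governed by the primed exponents coming from the $D_{\delta/2}$ sublayer) does not dominate, so that the leakage from the cloaked object is absorbed into the two displayed terms.

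In more detail, the order of steps would be: (i) rewrite \eqref{eq:sys1} as an equivalent system of boundary integral equations on $S_\delta\cup S_{\delta/2}\cup\partial D_{\delta/2}$ using single-layer and $\Acal$-potentials, with the transmission/jump relations from Section~\ref{sect:3}; (ii) pull everything back to the fixed domain $D$ via the blowup map, recording the $\delta$-powers generated by $\BB$, $|\BB|$, $\BB^{-1}$ in \eqref{eq:jacob1}--\eqref{eq:loss1}; (iii) show the rescaled boundary operator is boundedly invertible uniformly in $\delta$ (for small $\delta$), isolating the lossy-layer block and showing it contributes a term of size $\delta^{t}$ acting like a Dirichlet-type constraint that kills the tangential electric trace on the inner boundary up to $O(\delta^{\beta'-t})$ — this uses an energy estimate à la \cite{BL,BLZ} on the lossy annulus; (iv) estimate the right-hand side $\nu\times\bE^i|_{\partial D_\delta}$ and its image under the inverse, using that $\bE^i$ is smooth, $\|\bE^i\|=O(1)$, and $\partial D_\delta$ shrinks to the curve, so $\|\nu\times\bE^i\|_{\mathrm{TH}^{-1/2}(\partial D_\delta)} = O(\delta)$; (v) combine to get $\|\ba\|=O(\delta^{\beta-t/2})$ in the relevant norm and finally plug into the far-field integral, where integrating the density against $e^{-i\omega\hat\Bx\cdot\By}$ over $\partial D_\delta$ yields the extra factor $\delta$ (from the measure), producing $\|\mathbf{A}^\delta_\infty\|\le C(\delta^{\beta-t/2+1}+\delta^2)$, with $C$ depending only on $\omega$ and $D$ by the uniform invertibility in step (iii).

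The main obstacle I anticipate is step (iii): establishing that, after rescaling, the full coupled boundary-integral system on the three interfaces of the thin tube is invertible with an inverse bounded \emph{uniformly} in $\delta$, and that the lossy-layer block genuinely produces the shielding with the correct power $\delta^{t}$ rather than a weaker decay. The subtlety specific to the Maxwell case (absent in the Helmholtz treatment of \cite{DLU15,LiLiuRonUhl}) is that the relevant operator $\Mcal^\omega_D$ is only Fredholm, not coercive, so one must verify that the finite-dimensional kernel does not move under the $\delta$-dependent perturbation and that the anisotropic geometry of $D$ (a tube around a curve, collapsing anisotropically under $A$) does not degrade the constants; controlling $\BB$, $\BB^{-1}$ and $|\BB|$ near the two end-caps $D_\delta^a$, $D_\delta^b$, where the blowup is isotropic but must be glued $C^3$-smoothly to the anisotropic facade, is the technically delicate point. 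Once uniform invertibility is in hand, steps (iv)–(v) are routine estimates on smooth kernels against measures concentrating on $\Gamma_0$.
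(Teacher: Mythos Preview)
Your plan has the right high-level shape (energy control in the lossy layer $\Rightarrow$ shielding $\Rightarrow$ small far field), but the mechanism you propose in step~(iii) is not the one the paper uses, and as written it would not give constants independent of $\varepsilon_a,\mu_a,\sigma_a$. You set up a coupled boundary-integral system on the \emph{three} interfaces $S_\delta$, $S_{\delta/2}$, $\partial D_{\delta/2}$ and ask for uniform invertibility of the rescaled operator. But the block coming from $\partial D_{\delta/2}$ depends on the arbitrary interior medium, so uniform bounds on the inverse independent of that medium are exactly what you cannot expect to get this way. The paper avoids this entirely: it never writes an integral equation on the inner interfaces. It works only with the exterior density $\ba$ on $\partial D_\delta$ and the single quantity $\tilde\Phi:=\nu\times\bE_\delta|_+$ (pulled back to $\partial D$), and closes a loop of three inequalities: (a) the far-field bound $\|\mathbf{A}_\infty^\delta\|\lesssim \delta\|\tilde\Phi\|_{S^f}+\delta^2(\|\tilde\Phi\|_{\partial D}+1)$; (b) the energy identity of \cite{BLZ}, which bounds $\|\bE_\delta\|_{L^2}^2$ in the lossy shell by boundary terms on $\partial B_R$ and $\partial D_\delta$, with the interior contribution simply \emph{dropped} by sign; and (c) a duality estimate $\|\tilde\Phi\|_{S^c}\lesssim \delta^{\beta^{(\prime)}}\|\bE_\delta\|_{L^2(\text{lossy})}$ obtained by testing $\tilde\Phi$ against a Sobolev extension $\mathbf{W}\in H^2(D)$ that \emph{vanishes on $D_{1/2}$} (a localized version of a lemma from \cite{BLZ}), so the arbitrary interior never enters. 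Iterating (a)--(c) yields $\|\tilde\Phi\|\lesssim\delta^{\beta-t/2}$ and hence the theorem. This vanishing-extension duality is the missing key lemma in your plan.

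A second, smaller gap: the $O(\delta^2)$ floor in \eqref{eq:ncest1} does not come merely from ``thinness'' or a measure factor as you suggest in step~(iv)--(v). After rescaling, the leading $O(\delta)$ contribution to the far field is $\int_{S^f}G_\omega(\Bx-\Bz_{\tilde y})\,\nu_{\tilde y}\times\bE^i(\Bz_{\tilde y})\,d\sigma$, and this vanishes identically because $\int_0^{2\pi}\nu_{\tilde y}\,d\vartheta=0$ around each normal disk of the curve (see \eqref{eq:techpt1}). Without spotting this geometric cancellation you would only get $O(\delta)$, not $O(\delta^2)$, for the perfectly-shielded baseline.
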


\begin{rem}\label{lem:physical1}
Following our earlier discussion, one can immediately infer by Theorems~\ref{thm:transop} and \ref{th:main1} that the push-forwarded structure
in \eqnref{eq:clkstruc1},
$$\{\Omega; \varepsilon, \mu, \Gs\}=(F_\delta)_*\{\Omega; \varepsilon_\delta, \mu_\delta, \Gs_\delta\}$$
produces an approximate full-cloaking device within at least $\delta$-accuracy to the ideal cloak.
Indeed, if we set $s=2$ and $r=t=0$ then one has $\beta=1$, $\beta'=0$ and the accuracy
of the ideal cloak will be $\delta^2$, which is the highest accuracy that one can obtain for such a construction. Particularly, it is emphasize that in \eqref{eq:ncest1}, the estimate is independent of $\varepsilon_a$, $\mu_a$, $\Gs_a$, and this means that the cloaked content $\{\Omega_a;\varepsilon_a^*,\mu_a^*,\sigma_a^*\}$ in \eqnref{eq:clkstruc1} can be arbitrary but regular. Finally, as remarked earlier, we refer to \cite{LiLiuRonUhl} for the construction of the blowup transformation $F_\delta$, which we always assume the existence in the current study.
\end{rem}

The subsequent three subsections are devoted to the proof of Theorem~\ref{th:main1}. For our later use, we first derive some critical lemmas.
\subsection{Auxiliary lemmas}
In this subsection we present some auxiliary lemmas that are essential for our analysis of the far field estimates.
To begin with, we show the following properties of the blowup transformation defined in \eqnref{eq:A}
\begin{lem}[Lemma 4.1 in \cite{DLU15}]\label{le:2}
Let $A$ be the transformation introduced in \eqref{eq:A} and \eqref{eq:Ares1} which maps the region $\overline{D}_\delta$ to $\overline{D}$.
Let $\BB(\By)$ be the corresponding Jacobian matrix of $A(\By)$ given by \eqnref{eq:jacob1}.
Then we have
\beq\label{eq:jocob11}
\BB(\By)=\left\{
\begin{array}{ll}
\frac{1}{\delta}\mathbf{I}_{3\times 3}-\Big(\frac{1}{\delta}-1\Big)\Bz_y'(\xi)\Bz_y'(\xi)^T, & \By\in D_\delta^f, \\
\frac{1}{\delta}\mathbf{I}_{3\times 3}, &\By\in D_\delta^a\cup D_\delta^b,
\end{array}
\right.
\eeq
where the superscript $T$ denotes the transpose of a vector or a matrix. Furthermore,
\beq\label{eq:le22}
 \BB(\By)\mathbf\nu_{\mathbf{y}}= \frac{1}{\delta}\mathbf\nu_{\mathbf{y}},\quad \By\in\partial D_\delta,\\
\eeq
where $\nu_{\mathbf{y}}$ stands for the unit outward normal vector to $\p D_\delta$ at  $\By\in\partial D_\delta$.
\end{lem}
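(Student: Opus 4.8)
The plan is to compute the Jacobian of the piecewise-defined blowup transformation $A$ directly from its formulas \eqref{eq:A} and \eqref{eq:Ares1}, and then to extract the normal-vector identity as a corollary. On the end-caps $D_\delta^a$ and $D_\delta^b$ the map is simply $\By\mapsto (\By-P_0)/\delta+P_0$ (respectively with $Q_0$), which is an affine dilation, so $\BB(\By)=\nabla_\By A(\By)=\tfrac1\delta\mathbf{I}_{3\times 3}$ is immediate. The substantive case is $\By\in D_\delta^f$. Here I would parametrize a point $\By$ in the tubular piece by its projection $\Bz_y=\Bz(\xi)\in\GG_0$ onto the curve together with its position $\By-\Bz_y$ in the normal disk $\mathscr{S}_\delta(\Bz_y)\subset N(\Bz_y)$, and write $A(\By)=\Bz_y+\tfrac1\delta(\By-\Bz_y)$. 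Differentiating, $\nabla_\By A = \tfrac1\delta\mathbf{I}_{3\times 3}+(1-\tfrac1\delta)\,\nabla_\By\Bz_y$, so everything comes down to identifying $\nabla_\By \Bz_y$, the derivative of the projection-onto-$\GG_0$ map.

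The key geometric fact is that $\Bz_y$ is constant along every normal disk $\mathscr{S}_\delta(\Bz(\xi))$ and varies (to first order) only in the tangential direction $\Bz'(\xi)=\Bz_y'(\xi)$, which is a unit vector since $\xi$ is the arc-length parameter. Concretely, for $\By$ in the tube one has the orthogonal decomposition of displacements: an infinitesimal move of $\By$ within the normal plane does not change $\Bz_y$, while an infinitesimal move along the curve changes $\Bz_y$ by the same amount to leading order. This forces $\nabla_\By\Bz_y$ to be the orthogonal projection onto $\mathrm{span}\{\Bz_y'(\xi)\}$, i.e. $\nabla_\By\Bz_y=\Bz_y'(\xi)\Bz_y'(\xi)^T$. (This is where one uses that $\GG_0$ is smooth, that the radius $q\le q_0$ is small enough that each $\mathscr{S}_q(\Bx)$ meets $\GG_0$ only at $\Bx$ — the hypothesis made when $D_q^f$ was defined — so the projection is well-defined and differentiable, and that $N(\Bx)$ is genuinely the normal plane so that $\Bz_y'(\xi)$ is orthogonal to the disk directions.) Substituting gives
\beq
\BB(\By)=\frac1\delta\mathbf{I}_{3\times 3}+\Big(1-\frac1\delta\Big)\Bz_y'(\xi)\Bz_y'(\xi)^T=\frac1\delta\mathbf{I}_{3\times 3}-\Big(\frac1\delta-1\Big)\Bz_y'(\xi)\Bz_y'(\xi)^T,
\eeq
which is exactly \eqref{eq:jocob11}.

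For the normal-vector identity \eqref{eq:le22}: on $\partial D_\delta^a\cup\partial D_\delta^b$ the claim is trivial from $\BB=\tfrac1\delta\mathbf{I}_{3\times 3}$. On the facade $S_\delta^f$, the outward normal $\nu_\By$ at a point $\By=\Bz(\xi)+\delta\,\mathbf{n}(\Bx)$ lies in the normal plane $N(\xi)$ (it points radially outward in the disk $\mathscr{S}_\delta$, so it is perpendicular to $\GG_0$), hence $\Bz_y'(\xi)^T\nu_\By=\Bz_y'(\xi)\cdot\nu_\By=0$. Therefore the rank-one correction term annihilates $\nu_\By$ and $\BB(\By)\nu_\By=\tfrac1\delta\nu_\By$. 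I expect the main obstacle to be the rigorous justification that the projection map $\By\mapsto\Bz_y$ is $C^1$ on the tube with gradient exactly the stated rank-one projector — this requires the implicit function theorem applied to the critical-point equation $(\By-\Bz(\xi))\cdot\Bz'(\xi)=0$ characterizing the nearest point, together with the non-intersection hypothesis on $\mathscr{S}_q$ to guarantee uniqueness; the rest is routine matrix bookkeeping. Since this lemma is quoted verbatim as Lemma 4.1 of \cite{DLU15}, I would in fact simply cite that reference for the detailed verification and present only the computation above as a sketch.
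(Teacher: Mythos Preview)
Your proposal is correct and takes essentially the same approach as the paper: the paper gives no proof at all for this lemma and simply cites it as Lemma~4.1 in \cite{DLU15}, which is exactly what you propose doing at the end. The computational sketch you supply (differentiating $A$, identifying $\nabla_{\By}\Bz_y$ as the rank-one projector $\Bz_y'(\xi)\Bz_y'(\xi)^T$ via the tubular-neighborhood projection, and using $\nu_{\By}\perp\Bz_y'(\xi)$ on the facade) is accurate and more detailed than anything the paper provides.
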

\begin{rem}
In view of the Jacobian matrix form \eqnref{eq:jocob11}, one can also find that the eigenvalues of $\BB(\Bx)$, $\Bx\in D_\delta$ are
either $1$ or $1/\delta$. Hence for any vector field $\bV\in \RR^3$, there holds
\beq\label{eq:eigenB1}
\|\bV\|^2\leq \la \BB(\Bx)\bV, \bV \ra \leq \delta^{-1} \|\bV\|^2
\eeq
uniformly for $\Bx\in D_\delta$. It can also be easily seen from \eqnref{eq:jocob11} that
\beq\label{eq:jacobdet1}
|\BB(\By)|=\delta^{-2}, \quad \By \in D_\delta^f.
\eeq
\end{rem}
For the sake of simplicity, we define
\beq\label{eq:def1}
\bE_\delta^+:=\bE_\delta-\bE^i, \quad \bH_\delta^+:=\bH_\delta-\bH^i, \quad \mbox{in} \quad \RR^3\setminus \overline{D}_\delta.
\eeq
Furthermore, we introduce the following notations
\beq\label{eq:def2}
\ti\bE(\ti\Bx):=\bE(A^{-1}(\ti\Bx))=\bE(\Bx), \quad \ti\bH(\ti\Bx):=\bH(A^{-1}(\ti\Bx))=\bH(\Bx),
\eeq
and define the corresponding fields after change of variables by
\beq\label{eq:def3}
\hat{\bE}(\ti\Bx):=((\ti\BB^T)^{-1}\ti\bE)(\ti\Bx), \quad \hat{\bH}(\ti\Bx):=((\ti\BB^T)^{-1}\ti\bH)(\tdx),
\eeq
where
$$
\tilde{\BB}(\tdx):=\BB(A^{-1}(\tdx))=\BB(\Bx).
$$
We mention that sometimes we write $\BB$ and $\ti\BB$ in the sequel and omit their dependences for simplicity.
The following lemma is of critical importance for our subsequent analysis.
\begin{lem}[Corollary 3.58 in \cite{Monk03}]\label{le:changerela1}
Let $\ti\Bx=A(\Bx)$ with $\BB(\Bx)=\nabla A(\Bx)$.
Then for the bounded domain $D_\delta$ and any vector field $\bV\in H(\mbox{\emph{curl}}; D_\delta)$,
$$\hat{\bV}(\tdx):=(\ti\BB^T)^{-1} \ti \bV(\ti\Bx),\quad \ti \bV(\ti\Bx):=\bV(\Bx), \quad \Bx\in D_\delta, $$
there hold the following identities
\beq\label{eq:ppp41}
|\ti\BB|^{-1}\ti\BB(\nabla\times \bV) (A^{-1}(\tdx))=\nabla_{\ti \Bx}\times \hat{\bV}(\tdx), \quad
\eeq
and
\beq\label{eq:pp411}
\int_{\p D_\delta}(\nu_{\Bx}\times \bV)\cdot \BW d\Gs_x= \int_{\p D}(\nu_{\tdx}\times \hat{\bV})\cdot \hat{\BW} d\Gs_{\tilde{x}}
\eeq
where $\BW\in H(\mbox{\emph{curl}}; D_\delta)$ and $\hat{\BW}(\tdx):=(\ti\BB^T)^{-1}\ti{\BW}(\tdx):=(\BB^T)^{-1}\BW(\Bx)$.
\end{lem}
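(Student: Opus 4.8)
The plan is to recognize \eqref{eq:ppp41} and \eqref{eq:pp411} as the classical covariant (``pull-back'') transformation laws for $H(\mathrm{curl})$-conforming vector fields — this is precisely Corollary~3.58 in \cite{Monk03} — and to carry the two short arguments behind them through in the present notation. First I would record the facts about $A$ that make the computations legitimate: by Lemma~\ref{le:2}, the map $A$ in \eqref{eq:A}--\eqref{eq:Ares1} is globally Lipschitz and smooth on each of the pieces $D_\delta^f$, $D_\delta^a$, $D_\delta^b$; its Jacobian $\BB$ and inverse $\BB^{-1}$ are bounded; and it is orientation-preserving, the eigenvalues of $\BB$ being $1$ or $1/\delta$, so that $|\BB|>0$, the determinant symbols $|\BB|$, $|\ti\BB|$ carry no sign ambiguity, and the change of variables $\ti\Bx=A(\Bx)$ gives $d\Bx=|\ti\BB|^{-1}d\ti\Bx$.

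For \eqref{eq:ppp41} I would argue first for $\bV\in C^\infty(\overline{D}_\delta)^3$ and then extend by density. Identifying $\bV$ with the differential $1$-form $\alpha=\sum_i V_i\,dx_i$ on $D_\delta$, a short chain-rule computation shows that the pull-back of $\alpha$ under $A^{-1}\colon D\to D_\delta$ equals $\sum_j\big((\ti\BB^T)^{-1}\ti\bV\big)_j\,d\tilde{x}_j$, i.e.\ exactly the $1$-form attached to $\hat{\bV}$. Since the exterior derivative commutes with pull-back, the $2$-form $d\alpha$ — which, under the standard identification of $2$-forms with vector fields in $\RR^3$, corresponds to $\nabla_\Bx\times\bV$ — is carried onto the $2$-form attached to $\nabla_{\ti\Bx}\times\hat{\bV}$; comparing this with the rule by which pull-back of a $2$-form transforms the associated vector field, namely $\Bw\mapsto|\ti\BB|^{-1}\ti\BB\,(\Bw\circ A^{-1})$, yields \eqref{eq:ppp41} for smooth $\bV$. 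Passing to general $\bV\in H(\mathrm{curl};D_\delta)$ then uses that $C^\infty(\overline{D}_\delta)^3$ is dense in $H(\mathrm{curl};D_\delta)$ and that $\bV\mapsto\hat{\bV}$ is a bounded isomorphism of $H(\mathrm{curl};D_\delta)$ onto $H(\mathrm{curl};D)$, so that \eqref{eq:ppp41} holds in the distributional sense; alternatively one may bypass forms altogether and verify \eqref{eq:ppp41} directly by a componentwise chain-rule computation for smooth fields.

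For \eqref{eq:pp411} I would, again by density, reduce to $\bV,\BW\in C^\infty(\overline{D}_\delta)^3$ (the boundary terms being understood as the appropriate $H^{-1/2}$ duality pairings) and combine \eqref{eq:ppp41} with the integration-by-parts (Green) identity for the curl operator on $D_\delta$, $\int_{\p D_\delta}(\nu_\Bx\times\bV)\cdot\BW\,d\Gs_x=\int_{D_\delta}\big[(\nabla_\Bx\times\bV)\cdot\BW-\bV\cdot(\nabla_\Bx\times\BW)\big]\,d\Bx$. Substituting $\ti\Bx=A(\Bx)$, using $d\Bx=|\ti\BB|^{-1}d\ti\Bx$, replacing $(\nabla_\Bx\times\bV)\circ A^{-1}$ by $|\ti\BB|\,\ti\BB^{-1}(\nabla_{\ti\Bx}\times\hat{\bV})$ via \eqref{eq:ppp41} and $\BW\circ A^{-1}$ by $\ti\BB^T\hat{\BW}$, the determinant factors cancel and the remaining matrices collapse through the identity $(\ti\BB^{-1})^T\ti\BB^T=\mathbf{I}_{3\times 3}$, giving $\int_{D_\delta}(\nabla_\Bx\times\bV)\cdot\BW\,d\Bx=\int_D(\nabla_{\ti\Bx}\times\hat{\bV})\cdot\hat{\BW}\,d\ti\Bx$ and, by the same computation with $\bV$ and $\BW$ interchanged, $\int_{D_\delta}\bV\cdot(\nabla_\Bx\times\BW)\,d\Bx=\int_D\hat{\bV}\cdot(\nabla_{\ti\Bx}\times\hat{\BW})\,d\ti\Bx$. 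Feeding these into the Green identity on $D_\delta$ and recognizing the right-hand side as the Green identity on $D$ then produces \eqref{eq:pp411}. The step I expect to require genuine care is the $H(\mathrm{curl})$-level justification in the previous paragraph: since $A$ is merely Lipschitz and $\BB$ jumps across the interfaces of $D_\delta^f$, $D_\delta^a$, $D_\delta^b$, one must check that $\hat{\bV}$ is truly $H(\mathrm{curl})$-conforming and that no spurious interface contributions enter \eqref{eq:ppp41}; this is exactly where the preservation of tangential traces under the covariant transformation is used, and for that point I would invoke the corresponding results in \cite{Monk03}.
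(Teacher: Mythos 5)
Your argument is correct, and it is essentially the standard textbook derivation of the covariant ($H(\mathrm{curl})$-conforming) transformation law: establish \eqref{eq:ppp41} for smooth fields by a pull-back/chain-rule computation, extend by density, and then obtain \eqref{eq:pp411} from Green's identity for the curl after a change of variables with the cancellations $(\ti\BB^{-1})^T\ti\BB^{T}=\mathbf{I}_{3\times 3}$ and $|\ti\BB|\cdot|\ti\BB|^{-1}=1$. Note, however, that the paper supplies no proof of its own here — it states the lemma as a citation to Corollary~3.58 of \cite{Monk03} — so your write-out is a correct reconstruction of the cited argument rather than an alternative to one given in the paper; the point you flag about Lipschitz pieces and interface continuity of tangential traces is indeed the only delicate step, and is handled exactly as you indicate in \cite{Monk03}.
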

Note that
\beq\label{eq:areapro411}
d\Gs_y=\left\{
\begin{array}{ll}
\delta\, d\Gs_{\tilde{y}}, & \By\in S_\delta^f,\\
\delta^2\, d\Gs_{\tilde{y}}, & \By\in S_\delta^b\cup S_\delta^a,
\end{array}
\right.
\eeq
with which one can show that 
\begin{lem}\label{le:changerela2}
Let $D^c$, $c\in \{f, a, b\}$ be defined at the beginning of this section.
Let $\bV$ and $\hat{\bV}$, $\BB$ be similarly defined as those in Lemma \ref{le:changerela1}.
Then for any $\BW\in H(\mbox{\emph{curl}};D)$, one has
\begin{align}
\int_{\p D^f}\nu_{\tdx}\times \tilde\bV\cdot \BW(\tdx)\, d\Gs_{\ti x} = & \delta^{-1} \int_{\p D^f} \ti \nu_{\tdx}\times \hat{\bV}
\cdot \hat\BW(\tdx)\, d\Gs_{\ti x}, \label{eq:imp411}\\
\int_{\p D^c}\nu_{\tdx}\times \ti\bV\cdot \BW(\tdx)\, d\Gs_{\ti x} = & \delta^{-2} \int_{\p D^c} \nu_{\tdx}\times \hat{\bV} \cdot
 \hat\BW(\tdx)\, d\Gs_{\ti x} \quad c\in \{a, b\}, \label{eq:imp412}
\end{align}
where $\hat{\BW}(\tdx):=(\ti\BB^T)^{-1}\BW(\tdx)$.
\end{lem}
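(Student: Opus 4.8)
The plan is to reduce both identities \eqref{eq:imp411} and \eqref{eq:imp412} to a single pointwise algebraic relation between the two integrands, evaluated at a point $\tilde\Bx\in\partial D^c$ and its preimage $\Bx=A^{-1}(\tilde\Bx)\in\partial D_\delta^c$, and then integrate over $\partial D^c$ against $d\Gs_{\tilde x}$. First I would record three elementary facts about $\BB$. (i) By the explicit form in \eqref{eq:jocob11}, $\BB$ is a scalar multiple of $\mathbf{I}_{3\times 3}$ plus a symmetric rank-one matrix, hence symmetric, so $(\tilde\BB^T)^{-1}=\tilde\BB^{-1}$ and the ``hat'' maps in Lemma~\ref{le:changerela1} are $\hat\bV=\BB^{-1}\tilde\bV$, $\hat\BW=\BB^{-1}\BW$. (ii) By \eqref{eq:le22}, $\BB\nu_\By=\delta^{-1}\nu_\By$ on $\partial D_\delta$; combined with the symmetry of $\BB$ and the standard transformation law $\nu_{\tilde\Bx}\parallel\BB^{-T}\nu_\Bx$ for outward unit normals under a diffeomorphism, this gives $\nu_{\tilde\Bx}=\nu_\Bx$, i.e. the blowup $A$ leaves the outward unit normal unchanged on each piece of $\partial D_\delta$ (so $\tilde\nu_{\tilde\Bx}$ and $\nu_{\tilde\Bx}$ coincide). (iii) By \eqref{eq:jacobdet1}, $|\BB|=\delta^{-2}$ on $D_\delta^f$, whereas $\BB=\delta^{-1}\mathbf{I}_{3\times 3}$ and hence $|\BB|=\delta^{-3}$ on $D_\delta^a\cup D_\delta^b$.

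Next I would use the determinant transformation rule for the scalar triple product, $\det(\BB\mathbf a,\BB\mathbf b,\BB\mathbf c)=|\BB|\det(\mathbf a,\mathbf b,\mathbf c)$, together with $(\nu\times\mathbf a)\cdot\mathbf b=\det(\nu,\mathbf a,\mathbf b)$. Factoring $\nu_\Bx=\BB^{-1}(\BB\nu_\Bx)=\BB^{-1}(\delta^{-1}\nu_\Bx)$ out of the triple product yields, pointwise on $\partial D^c$,
\[
\tilde\nu_{\tilde\Bx}\times\hat\bV\cdot\hat\BW=\det\big(\nu_\Bx,\BB^{-1}\tilde\bV,\BB^{-1}\BW\big)=\delta^{-1}|\BB|^{-1}\,\big(\nu_{\tilde\Bx}\times\tilde\bV\cdot\BW\big).
\]
On the facade this prefactor is $\delta^{-1}\cdot\delta^{2}=\delta$, while on the two end-caps it is $\delta^{-1}\cdot\delta^{3}=\delta^{2}$; rearranging gives $\nu_{\tilde\Bx}\times\tilde\bV\cdot\BW=\delta^{-1}\,\tilde\nu_{\tilde\Bx}\times\hat\bV\cdot\hat\BW$ on $\partial D^f$ and $\nu_{\tilde\Bx}\times\tilde\bV\cdot\BW=\delta^{-2}\,\nu_{\tilde\Bx}\times\hat\bV\cdot\hat\BW$ on $\partial D^a\cup\partial D^b$, and integrating over the corresponding surface produces exactly \eqref{eq:imp411} and \eqref{eq:imp412}. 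As a cross-check I would note that the same conclusion follows by combining the change-of-variables identity \eqref{eq:pp411} with the surface-measure scaling \eqref{eq:areapro411} and the normal-preservation established in (ii).

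The computation itself is short, so there is no serious obstacle; the only points that require genuine care are (a) confirming that $A$ does not alter the outward unit normal on each piece of $\partial D_\delta$ — this is where the symmetry of $\BB$ together with \eqref{eq:le22} must be used, so that $\tilde\nu_{\tilde\Bx}$ may be identified with $\nu_{\tilde\Bx}$ — and (b) keeping the powers of $\delta$ straight, in particular distinguishing $|\BB|=\delta^{-2}$ on the facade part $D_\delta^f$ from $|\BB|=\delta^{-3}$ on the end-cap parts $D_\delta^a\cup D_\delta^b$, since this is precisely what yields the two different exponents $\delta^{-1}$ and $\delta^{-2}$ in the statement.
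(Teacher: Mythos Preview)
Your proposal is correct and amounts to the same argument the paper gives: the paper's one-line proof simply cites the abstract change-of-variables identity \eqref{eq:pp411} together with the surface-measure scaling \eqref{eq:areapro411}, and your cross-check at the end is exactly that. Your main route---the pointwise triple-product computation $\det(\BB^{-1}\nu,\BB^{-1}\tilde\bV,\BB^{-1}\BW)=|\BB|^{-1}\det(\nu,\tilde\bV,\BW)$ combined with \eqref{eq:le22} and the explicit values of $|\BB|$---is just an explicit unpacking of the same identity at each boundary point, so the two approaches differ only in the level of detail, not in substance.
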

\begin{proof}
The proof follows directly from \eqnref{eq:pp411}, \eqnref{eq:areapro411} by using change of variables in the corresponding integrals. 
\end{proof}

\begin{lem}\label{le:1}
Suppose $\tilde\ba (\tdx)=\ba (\Bx)$ for $\Bx\in\partial D_\delta$ and $\tilde\Bx\in\partial D$.
Define
$$
{\Mcal}_{S^f}^{\omega}[\tilde\ba ](\tdx):= \mathrm{p.v.}\quad \mathbf{\nu}_{\ti{\mathbf{x}}}\times \nabla_{\mathbf{z}_{\ti{x}}}\times \int_{S^f}{G_\omega}(\Bz_{\ti{x}}-\Bz_{\ti{y}})\ti\ba (\tdy)\, d\Gs_{\tilde{y}},$$
and
$$
\Mcal_{S^c}[\tilde\ba ](\tdx):=\mathrm{p.v.}\quad \mathbf{\nu}_{\ti{\mathbf{x}}}\times \nabla_{\ti{\mathbf{x}}}\times \int_{S^c} G_\omega(\tdx-\tdy)\ti\ba (\tdy)\, d\Gs_{\ti{y}}.
$$
Define
$$\Lcal_{D_\delta}^{\omega}(\Bx):=\nu_{\mathbf{x}}\times\nabla_{\mathbf{x}}\times\nabla_{\mathbf{x}}
\times \Acal_{D_\delta}^{\omega}[\ba](\Bx).$$
Then there hold the following results
\beq\label{eq:arg1}
\Mcal_{D_\delta}^\omega[\ba ](\Bx)=
\left\{
\begin{array}{ll}
\delta \Mcal_{S^f}^\omega[\tilde\ba ](\tdx)+ \Ocal(\delta^2\|\tilde\ba \|_{\mathrm{TH}^{-1/2}(\p D)}), & \Bx \in S_\delta^f, \\
\Mcal_{S^c}[\tilde\ba ](\tdx)+ \Ocal(\delta\|\tilde\ba \|_{\mathrm{TH}^{-1/2}(\p D)}), & \Bx \in S_\delta^c,
\end{array}
\right.
\eeq
and
\beq\label{eq:arg2}
\Lcal_{D_\delta}^{\omega}[\ba](\Bx)=
\left\{
\begin{array}{ll}
\delta\Lcal_{S^f}^\omega[\tilde\ba ](\tdx)+ \Ocal(\delta^2\|\tilde\ba \|_{\mathrm{TH}^{-1/2}(\p D)}), & \Bx \in S_\delta^f, \\
{\frac{1}{\delta} \nu_{\tilde{\mathbf{x}}}\times\nabla_{\tilde{\mathbf{x}}}\times\nabla_{\tilde{\mathbf{x}}}\times\Acal_{S^c}[\ti\ba](\tdx)+ \Ocal(\delta\|\tilde\ba \|_{\mathrm{TH}^{-1/2}(\p D)})}, & \Bx \in S_\delta^c,
\end{array}
\right.
\eeq
for $c\in\{a, b\}$, where
$$
\Lcal_{S^f}^{\omega}[\ti\ba](\Bx):=\nu_{\ti{\mathbf{x}}}\times\nabla_{\mathbf{z}_{\ti x}}\times\nabla_{\mathbf{z}_{\ti x}}
\times \int_{S^f}G_\omega(\Bz_{\ti{x}}-\Bz_{\ti{y}})\ti\ba (\tdy)\, d\Gs_{\ti{y}},
$$
and $\Acal_{S^c}$ is given in \eqref{defA} by replacing $\partial D$ and $\omega$ with $S^c$ and $0$, respectively.
\end{lem}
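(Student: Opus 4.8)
The plan is to establish both \eqref{eq:arg1} and \eqref{eq:arg2} by performing the blowup change of variables $\tdx=A(\Bx)$ inside the integral kernels and tracking the Jacobian factors. The guiding principle is Lemma \ref{le:2}: on $D_\delta^f$ the map $A$ scales distances transverse to $\GG_0$ by $1/\delta$ but leaves the arc-length direction essentially unchanged, whereas on $D_\delta^a\cup D_\delta^b$ it is the honest dilation $\Bx\mapsto \Bx/\delta$; and on $\partial D_\delta$ one has $\BB\nu_{\mathbf{y}}=\delta^{-1}\nu_{\mathbf{y}}$ by \eqref{eq:le22}. First I would treat the facade part $\Bx\in S_\delta^f$. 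Writing $\By=\Bz_y+\delta(\tdy-\Bz_{\ti y})$ one has $\Bx-\By=(\Bz_{\ti x}-\Bz_{\ti y})+\delta\,(\text{transverse displacement})$, so $\|\Bx-\By\|=\|\Bz_{\ti x}-\Bz_{\ti y}\|+\Ocal(\delta)$ whenever $\Bz_{\ti x}\neq\Bz_{\ti y}$; I would Taylor-expand $G_\omega(\Bx-\By)$ about $G_\omega(\Bz_{\ti x}-\Bz_{\ti y})$, convert the surface measure via \eqref{eq:areapro411} (which contributes a factor $\delta$ on $S_\delta^f$), and combine with the curl operators. The $\nu_{\mathbf{x}}\times\nabla_{\mathbf{x}}\times$ prefix, after the change of variables, produces $\nu_{\ti x}\times\nabla_{\Bz_{\ti x}}\times$ modulo lower-order transverse-derivative contributions that are $\Ocal(\delta)$ relative to the leading term; collecting the $\delta$ from the measure gives the stated $\delta\,\Mcal_{S^f}^\omega[\ti\ba](\tdx)+\Ocal(\delta^2\|\ti\ba\|_{\mathrm{TH}^{-1/2}})$, and the same bookkeeping with an extra curl gives \eqref{eq:arg2} on $S_\delta^f$. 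The $\mathrm{TH}^{-1/2}$ norm in the remainder is obtained by the standard mapping properties of the vectorial layer potentials (Proposition \ref{propjumpM} and the trace estimates of \cite{CK,Ned}), applied after the normalization $\ti\ba(\tdx)=\ba(\Bx)$.

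Next I would handle the end caps $\Bx\in S_\delta^c$, $c\in\{a,b\}$, which are cleaner because $A$ is there the pure dilation by $1/\delta$ about $P_0$ or $Q_0$. Then $\Bx-\By=\delta(\tdx-\tdy)$ exactly, so $G_\omega(\Bx-\By)=G_{\delta\omega}(\tdx-\tdy)$; expanding $G_{\delta\omega}$ in $\delta$ around the static kernel $G=G_0$ gives $G_\omega(\Bx-\By)=G(\tdx-\tdy)+\Ocal(\delta)$ (recall $G_\omega(\Bx)=-e^{i\omega\|\Bx\|}/(4\pi\|\Bx\|)$, so the $\omega$-dependence enters only through the analytic factor $e^{i\delta\omega\|\tdx-\tdy\|}$). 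For $\Mcal$ the measure factor $\delta^2$ from \eqref{eq:areapro411} is cancelled by the $\delta^{-2}$ coming from the two derivatives hidden in $\nu_{\mathbf{x}}\times\nabla_{\mathbf{x}}\times$ acting on a kernel homogeneous of degree $-1$ (equivalently, use \eqref{eq:imp412} of Lemma \ref{le:changerela2} together with $\BB\nu=\delta^{-1}\nu$ to see that the leading term is $\Ocal(1)$), yielding $\Mcal_{S^c}[\ti\ba](\tdx)+\Ocal(\delta\|\ti\ba\|_{\mathrm{TH}^{-1/2}})$. For \eqref{eq:arg2} on $S_\delta^c$ there is one additional $\nabla_{\mathbf{x}}$, i.e.\ an extra factor $\BB^{T}$ that on the normal direction is $\delta^{-1}$; counting $\delta^{-3}$ from the three transverse derivatives against $\delta^{2}$ from the measure leaves the announced $\delta^{-1}\,\nu_{\ti x}\times\nabla_{\ti x}\times\nabla_{\ti x}\times\Acal_{S^c}[\ti\ba](\tdx)+\Ocal(\delta\|\ti\ba\|_{\mathrm{TH}^{-1/2}})$, with the kernel now the static $G$ because the remaining $\omega$-factor has been absorbed into the error.

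The main obstacle is the facade estimate: unlike the end caps, $A$ on $D_\delta^f$ is not a dilation, the Jacobian $\BB$ in \eqref{eq:jocob11} is a genuinely anisotropic (rank-one perturbation of identity) matrix, and the surface $S_\delta^f$ degenerates onto the curve $\GG_0$ as $\delta\to 0$. One must therefore argue that the ``thin-tube'' integral over $S_\delta^f$ collapses, to leading order, to the one-dimensional curve integral defining $\Mcal_{S^f}^\omega$ and $\Lcal_{S^f}^\omega$, uniformly in the principal-value sense near $\Bz_{\ti x}=\Bz_{\ti y}$. This requires a careful splitting into a near-diagonal zone $|\xi_x-\xi_y|\lesssim\delta$, where one checks that the singular contribution reproduces exactly the p.v.\ on the curve with the correct $-I/2$-type jump already encoded in $\Mcal$, and a far zone where the smooth Taylor expansion in $\delta$ applies; the curvature terms $\Bz_y'(\xi)\Bz_y'(\xi)^T$ from \eqref{eq:jocob11} must be shown to enter only at order $\delta^2$ in the $\Mcal$ expansion and at order $\delta$ relative to the leading term in the $\Lcal$ expansion. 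Once this localization is in place, the remaining steps are the routine kernel Taylor expansions and applications of Lemmas \ref{le:changerela1}–\ref{le:changerela2} described above. $\hfill\square$
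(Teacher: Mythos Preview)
Your proposal is correct and follows essentially the same route as the paper: Taylor-expand the kernel $G_\omega(\Bx-\By)$ in $\delta$ using the geometry of $A$, convert the surface measure via \eqref{eq:areapro411}, and collect the leading terms. The only noteworthy difference is that for \eqref{eq:arg2} the paper first rewrites $\nabla\times\nabla\times\Acal_{D_\delta}^\omega=\omega^2\Acal_{D_\delta}^\omega+\nabla\nabla\cdot\Acal_{D_\delta}^\omega$ and then invokes known expansions for $\Acal_{D_\delta}^\omega$ and $\nabla\nabla\cdot\Acal_{D_\delta}^\omega$ (from \cite{DLU15,ADM14}), whereas you obtain the same $\delta$-powers by direct homogeneity counting; your explicit near-diagonal/far-zone discussion for the p.v.\ on $S_\delta^f$ is in fact more careful than the paper, which handles this step formally.
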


\begin{proof}
For
$\Bx, \By\in D_\delta$, one has
$$
\Bx-\By=(\Bx-\Bz_x)-(\By-\Bz_y)+\Bz_x-\Bz_y=\delta((\tdx-\Bz_{\tilde{x}})-(\tdy-\Bz_{\tilde{y}}))+(\Bz_{\tilde{x}}-\Bz_{\tilde{y}}).
$$
Hence, we have the following expansion for $\Bx\in S_\delta^f$,
$$
\|\Bx-\By\|=\|\Bz_{\tilde{x}}-\Bz_{\tilde{y}}\|+\delta \big\la(\tdx-\Bz_{\tilde{x}})-(\tdy-\Bz_{\tilde{y}}), \frac{\Bz_{\tilde{x}}-\Bz_{\tilde{y}}}{\|\Bz_{\tilde{x}}-\Bz_{\tilde{y}}\|}\big\ra + \Ocal(\delta^2).
$$
Similarly
\begin{align*}
& \big\la \Bx-\By, \mathbf\nu_{\mathbf{x}}\big\ra = \big\la \Bz_{\tilde{x}}-\Bz_{\tilde{y}}, \mathbf\nu_{\mathbf{x}}\big\ra +\delta \big\la (\tdx-\Bz_{\tilde{x}})-(\tdy-\Bz_{\tilde{y}}), \mathbf\nu_{\mathbf{x}}\big\ra,\\
&\|\Bx-\By\|^{-1}=\|\Bz_{\tilde{x}}-\Bz_{\tilde{y}}\|^{-1}-\delta \big\la(\tdx-\Bz_{\tilde{x}})-(\tdy-\Bz_{\tilde{y}}), \frac{\Bz_{\tilde{x}}-\Bz_{\tilde{y}}}{\|\Bz_{\tilde{x}}-\Bz_{\tilde{y}}\|^3}\big\ra + \Ocal(\delta^2),\\
&e^{i\omega \|\Bx-\By\|}=e^{i\omega\|\Bz_{\tilde{x}}-\Bz_{\tilde{y}}\|}\Big(1+i\omega\delta\big\la (\tdx-\Bz_{\tilde{x}})-(\tdy-\Bz_{\tilde{y}}), \frac{\Bz_{\tilde{x}}-\Bz_{\tilde{y}}}{\|\Bz_{\tilde{x}}-\Bz_{\tilde{y}}\|}\big\ra\Big)
+\Ocal(\delta^2).
\end{align*}
With those expansions at hand, we proceed to compute for $\Bx\in S_\delta^f$,
\begin{align*}
\nabla_{\mathbf{x}} G_\omega(\Bx-\By)&=\frac{(\Bx-\By)e^{i\omega\|\Bx-\By\|}}{4\pi\|\Bx-\By\|^2}\left(\frac{1}{\|\Bx-\By\|}-i\omega\right)\\
&=
\frac{(\Bz_{\tilde{x}}-\Bz_{\tilde{y}})e^{i\omega\|\Bz_{\tilde{x}}-\Bz_{\tilde{y}}\|}}{4\pi\|\Bz_{\tilde{x}}-\Bz_{\tilde{y}}\|^2}
\left(\frac{1}{\|\Bz_{\tilde{x}}-\Bz_{\tilde{y}}\|}-i\omega\right)+\Ocal(\delta)\\
&=\nabla_{\mathbf{z}_{\tilde{x}}} G_\omega(\Bz_{\tilde{x}}-\Bz_{\tilde{y}})+\Ocal(\delta),
\end{align*}
and by using vector calculus identities,
\begin{align*}
&\Mcal_{D_\delta}^\omega[\ba ](\Bx)=\mathbf{\nu}_{\mathbf{x}}\times \nabla_{\mathbf{x}}\times\int_{\p D_\delta} G_\omega(\Bx-\By)\ba (\By)\, d\Gs_y \\
=&\int_{\p D_\delta} \nabla_{\mathbf{x}} G_\omega(\Bx-\By)\mathbf{\nu}_{\mathbf{x}}\cdot\ba (\By)\, d\Gs_y
-\int_{\p D_\delta}\mathbf{\nu}_{\mathbf{x}}\cdot\nabla_{\mathbf{x}} G_\omega(\Bx-\By)\ba (\By)\, d\Gs_y\\
=&\delta\int_{S^f} \nabla_{\mathbf{z}_{\tilde{{x}}}} G_\omega(\Bz_{\tilde{x}}-\Bz_{\tilde{y}})\mathbf{\nu}_{\tilde{\mathbf{x}}}\cdot\tilde\ba (\tdy)\, d\Gs_{\tilde{y}}
-\delta\int_{S^f}\mathbf{\nu}_{\tilde{\mathbf{x}}}\cdot\nabla_{\mathbf{z}_{\tilde{x}}} G_\omega(\Bz_{\tilde{x}}-\Bz_{\tilde{y}})\tilde\ba (\tdy)\, d\Gs_{\tilde{y}}
\\&+\Ocal(\delta^2\|\tilde\ba \|_{\mathrm{TH}^{-1/2}(\p D)})
=\delta\Mcal_{S^f}^\omega[\tilde\ba ](\tdx)+ \Ocal(\delta^2\|\tilde\ba \|_{\mathrm{TH}^{-1/2}(\p D)}),
\end{align*}
which proves the case of \eqnref{eq:arg1} for $\Bx\in S_\delta^f$. Next, note that if $\Bx\in S_\delta^c$, $c\in \{a ,b \}$, then
$$
\Bz_{\tilde{x}}-\Bz_{\tilde{y}}=0 \quad \mbox{for} \quad \By\in S_\delta^c,
$$
and
$$
\|\Bx-\By\|=\delta\|\tdx-\tdy\|,\quad \frac{\la\Bx-\By,\mathbf\nu_{\mathbf{x}}\ra}{\|\Bx-\By\|}=\frac{\la\tdx-\tdy,\mathbf\nu_{\mathbf{x}}\ra}{\|\tdx-\tdy\|}, \quad \Bx,\By\in S_\delta^c.
$$
With the above facts and by using a similar argument to the proof of the first case of \eqnref{eq:arg1}, one can prove \eqnref{eq:arg1} for $\Bx\in S_\delta^c$, $c\in \{a ,b \}$.
To prove \eqnref{eq:arg2}, we first note that
\beq\label{eq:est111}
\nabla_{\mathbf{x}}\times\nabla_{\mathbf{x}}
\times \Acal_{D_\delta}^{\omega}[\ba](\Bx)=\omega^2 \Acal_{D_\delta}^{\omega}[\ba](\Bx)+\nabla_{\mathbf{x}}\nabla_{\mathbf{x}}\cdot\Acal_{D_\delta}^{\omega}[\ba](\Bx).
\eeq
By using (4.17) in \cite{DLU15}, one can easily obtain
\beq\label{eq:est112}
\Acal_{D_\delta}^\omega[\ba](\Bx)=\left\{
\begin{array}{ll}
\delta \Acal_{S^f}^\omega[\tilde\ba](\tdx)+ \Ocal(\delta^2\|\tilde\ba\|_{\mathrm{TH}^{-1/2}(\p D)}), &\Bx \in S_\delta^f, \\
\delta (\Acal_{S^c}+\Acal_{S^f}^\omega)[\tilde\ba](\tdx)+ \Ocal(\delta^2\|\tilde\ba\|_{\mathrm{TH}^{-1/2}(\p D)}), & \Bx \in S_\delta^c,
\end{array}
\right.
\eeq
for $ c\in\{a, b\}$, where $\Acal_{S^f}^\omega$ is defined similarly as $\Scal_{S^f}^\omega$ in \cite{DLU15}.
Using similar expansion strategy, one can prove for $\Bx\in S_\delta^f$
\beq\label{eq:est113}
\nabla_{\mathbf{x}}\nabla_{\mathbf{x}}\cdot\Acal_{D_\delta}^{\omega}[\ba](\Bx)=\delta\nabla_{\mathbf{z}_x}\nabla_{\mathbf{z}_x}\cdot\Acal_{S^f}^{\omega}[\ti\ba](\tdx)
+\Ocal(\delta^2\|\tilde\ba\|_{\mathrm{TH}^{-1/2}(\p D)}).
\eeq
Note that for $\Bx\in S_\delta^c$, $c\in\{a, b\}$, there holds (see Proposition 5.2 in \cite{ADM14} for details)
\beq\label{eq:est114}
\nabla_{\mathbf{x}}\nabla_{\mathbf{x}}\cdot\Acal_{D_\delta}^{\omega}[\ba](\Bx)
=\frac{1}{\delta}\nabla_{\ti{\mathbf{x}}}\nabla_{\ti{\mathbf{x}}}\cdot\Acal_{S^c}[\ti\ba](\tdx)+\Ocal(\delta\|\tilde\ba\|_{\mathrm{TH}^{-1/2}(\p D)}).
\eeq
By combining \eqnref{eq:est111}-\eqnref{eq:est114} together, one can complete the proof.
\end{proof}

\subsection{Asymptotic expansions}
In order to tackle the integral equation \eqref{eq:trans1}, we shall first derive some crucial asymptotic expansions. Henceforth, we denote $\tilde{\ba }(\tdy):=\ba (\By)$ for $\tdy=A(\By)$, $\By\in D_\delta$
and $\tdy\in \p D$. The same notation shall be adopted for $\Bx$ and $\tdx$.

For $\Bx\in \RR^3\setminus D_\delta$ with sufficiently large $\|\Bx\|$, we can expand $\bE_\delta-\bE^i$ from \eqnref{eq:vis1} in
 $\Bz\in \GG_0$ as follows
\begin{align}
&(\bE_\delta-\bE^i)(\Bx)=\nabla\times\Acal_{D_\delta}^{\omega}[\ba ](\Bx)=\nabla\times\int_{\p D_\delta}G_\omega(\Bx-\By)\ba (\By)\, d\Gs_y \nonumber\\
=& \nabla\times\int_{\p D_\delta}G_\omega(\Bx-\Bz_y)\ba (\By)\, d\Gs_y -\nabla\times\int_{\p D_\delta}\nabla G_\omega(\Bx-\Bz_y)\cdot(\By-\Bz_y)\ba (\By)\, d\Gs_y
\nonumber\\
&+ \nabla\times\int_{\p D_\delta}(\By-\Bz_y)^T\nabla^2G_\omega(\Bx-\mathbf\zeta(\By))(\By-\Bz_y)\ba (\By)\, d\Gs_{y}\nonumber\\
:=& R_1 +R_2 +R_3, \label{eq:I1I2}
\end{align}
where $\zeta(\By)=\eta \By+(1-\eta)\Bz_y \in D_\delta$ for some $\eta\in (0,1)$, and the superscript $T$ signifies the matrix transpose. We next estimate the three terms $R_1, R_2$ and $R_3$ in \eqref{eq:I1I2}.
The term $R_3$ in \eqref{eq:I1I2} is a remainder term from the Taylor series expansion and it verifies the following estimate,
\begin{align}
 &\|R_3\|_{L^{\infty}(\mathbb{S}^2)^3}
=\delta^2 \Big\|\nabla\times\int_{\p D_\delta}(\tdy-\Bz_{\tilde{y}})^T\nabla^2G_\omega(\Bx-\mathbf\zeta(\By))(\tdy-\Bz_{\tilde{y}})\ba (\By)\, d\Gs_{y}\Big\|_{L^{\infty}(\mathbb{S}^2)^3}
\nonumber\\
\leq &C\delta^3 \frac{1}{\|\Bx\|} \|\tilde\ba \|_{\mathrm{TH}^{-1/2}(\p D)}.\label{eq:R3}
\end{align}
In the sequel, we shall need the expansion of the incident plane wave $\bE^i$ in $\Bz\in \Gamma_0$, and there holds
\beq\label{eq:exp1}
\bE^i(\By)=\bE^i(\Bz_y)+\nabla \bE^i(\Bz_y)\cdot (\By-\Bz_y)+ \sum_{|\alpha|=2}^\infty \p_{\mathbf{y}}^\alpha \bE^i(\Bz_y)(\By-\Bz_y)^\alpha,
\eeq
where the multi-index $\alpha=(\alpha_1, \alpha_2, \alpha_3)$ and $\p_{\mathbf{y}}^{\alpha}=\p_{y_1}^{\alpha_1}\p_{y_2}^{\alpha_2}\p_{y_3}^{\alpha_3}$ with $\mathbf{y}=(y_1,y_2,y_3)$.
Since for $\By\in \p D_\delta$, $\mathbf\nu_{\mathbf{y}}=\mathbf\nu_{\tilde{\mathbf{y}}}$, one further has
$$
\mathbf\nu_{\mathbf{y}}\times \bE^i(\By)=\mathbf\nu_{\tilde{\mathbf{y}}}\times\sum_{|\alpha|=0}^\infty \delta^{\alpha} \p_{\mathbf{y}}^\alpha \bE^i(\Bz_y)(\tdy-\Bz_{\ti y})^\alpha.
$$
In what follows, we define
\begin{equation}\label{eq:ppp}
\tilde\Phi(\tdy):=\Phi(\By)=\mathbf{\nu}_{\mathbf{y}} \times  \bE_\delta(\By)\Big|_{\partial D_\delta}^+.
\end{equation}
%We mention that the key point to estimate the \emph{scattering amplitude} is to find the estimation for $R_1$, $R_2$ and
%$R_3$. When the boundary condition is the perfect electrical conductor (PEC) condition, the estimation is quite simple.
%In fact, the following theorem shows the estimation of fields under PEC on $\p D_\delta$.
\begin{thm}\label{prop:2}
Let $\bE_\delta$ be the solution to \eqnref{eq:sys1}, then there holds for $\Bx\in\mathbb{R}^3\backslash\overline{D}$,
\beq\label{eq:thmprin1}
\int_{S^f} G_\omega(\Bx-\Bz_{\tilde{y}}) \tilde\ba (\tdy)\, d\Gs_{\tilde{y}} = -2\int_{S^f} G_\omega(\Bx-\Bz_{\tilde{y}})\tilde\Phi(\tdy)\, d\Gs_{\tilde{y}} +\Ocal(\delta(\|\tilde\Phi\|_{\mathrm{TH}^{-1/2}(\p D)}+1)).
\eeq
If one assumes that $\Phi(\By)=0$, $\By\in \p D_\delta$, then there holds for $\Bx\in \RR^3\setminus \overline{D}$,
%$u_\delta$ is $C^2$-smooth around $\GG_0$, then in a neighborhood of $ G_0$
\begin{equation}\label{eq:aaa1}
\begin{split}
(\bE_\delta-\bE^i)(\Bx)=&2\delta^2\nabla\times\Big(\int_{S^f}G_{\omega}(\Bx-\Bz_{\ti y})\nu_{\tilde{\mathbf{y}}}\times \big(\nabla \bE^i(\Bz_{\tilde{y}})(\tdy-\Bz_{\ti{y}})\big)\, d\Gs_y\nonumber \\
& \quad \quad +\int_{S^a\cup S^b}G_{\omega}(\Bx-\Bz_{\tilde{y}})\nu_{\tilde{\mathbf{y}}}\times \bE^i(\Bz_{\tilde{y}}) \, d\Gs_{\tilde{y}}\nonumber\\
& \quad \quad  -\int_{S^f}\big(\nabla G_{\omega}(\Bx-\Bz_{\tilde{y}})\cdot(\tdy-\Bz_{\tilde{y}})\big)\nu_{\tilde{\mathbf{y}}}\times \bE^i(\Bz_{\tilde{y}})(\tdy)
\, d\Gs_{\tilde{y}}\Big)+\Ocal(\delta^3).
\end{split}
\end{equation}
\end{thm}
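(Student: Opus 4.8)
The plan is to analyze the vectorial boundary integral equation \eqref{eq:trans1} for the density $\ba$ on $\p D_\delta$, reduce it to an identity carried by the generating curve $\Gamma_0$ by projecting all source points onto $\Gamma_0$, and then read off the far field through the ansatz \eqref{eq:vis1}. Since by \eqref{eq:ppp} the outer tangential trace of $\bE_\delta$ on $\p D_\delta$ is $\Phi$, equation \eqref{eq:trans1} is equivalent to
\[
\ba \;=\; 2\,\Mcal_{D_\delta}^{\omega}[\ba] \;-\; 2\,\Phi \;+\; 2\,\nu\times\bE^i \qquad \text{on } \p D_\delta .
\]
The first thing I would prove is the $\delta$-uniform a priori bound $\|\tilde\ba\|_{\mathrm{TH}_{\mathrm{div}}^{-1/2}(\p D)}\le C\bigl(1+\|\tilde\Phi\|_{\mathrm{TH}^{-1/2}(\p D)}\bigr)$. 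This is obtained by transporting the displayed identity onto the fixed domain $D$ by means of Lemmas \ref{le:changerela1}, \ref{le:changerela2} and \ref{le:1}, which turn $I-2\Mcal_{D_\delta}^{\omega}$ into a small (in operator norm, uniformly in $\delta$) perturbation of an invertible Maxwell boundary integral operator on $\p D$, and then inverting.

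To prove \eqref{eq:thmprin1}, I would multiply the displayed identity by $G_\omega(\Bx-\Bz_{\tilde y})$ (note $\Bz_{\tilde y}=\Bz_y$ on $S_\delta^f$), integrate over $S_\delta^f$, and change variables using \eqref{eq:areapro411}. The $\Phi$-term produces exactly the asserted main term $-2\int_{S^f}G_\omega(\Bx-\Bz_{\tilde y})\tilde\Phi(\tdy)\,d\Gs_{\tilde y}$. By the first line of \eqref{eq:arg1} the $\Mcal_{D_\delta}^{\omega}[\ba]$-term becomes $2\delta\int_{S^f}G_\omega(\Bx-\Bz_{\tilde y})\Mcal_{S^f}^{\omega}[\tilde\ba](\tdy)\,d\Gs_{\tilde y}+\Ocal(\delta^2\|\tilde\ba\|)$, and the first piece is $\Ocal(\delta\|\tilde\ba\|)$ since $\Bx$ stays off $\overline D$ and the smooth factor $G_\omega(\Bx-\cdot)$ can be moved onto the adjoint of $\Mcal_{S^f}^{\omega}$. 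The $\nu\times\bE^i$-term is $\Ocal(\delta)$: on $S_\delta^f$ one has $\bE^i(\By)=\bE^i(\Bz_y)+\Ocal(\delta)$, and the $\Bz_y$-constant leading part integrates to $\Ocal(\delta^2)$ over $S_\delta^f$ because the average of the outer unit normal around each normal-plane circle vanishes --- consistently with $\int_{\p D_\delta}\nu\times\bE^i\,d\Gs=i\omega\int_{D_\delta}\bH^i\,dV=\Ocal(\delta^2)$ by the divergence theorem. Combining with the a priori bound gives the $\Ocal(\delta(\|\tilde\Phi\|+1))$ remainder.

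For the second assertion I would set $\Phi\equiv0$, so $\|\tilde\ba\|_{\mathrm{TH}_{\mathrm{div}}^{-1/2}(\p D)}=\Ocal(1)$ and $\ba=2\nu\times\bE^i+2\Mcal_{D_\delta}^{\omega}[\ba]$, and expand $\bE_\delta-\bE^i=\nabla\times\Acal_{D_\delta}^{\omega}[\ba]=R_1+R_2+R_3$ as in \eqref{eq:I1I2}, with $R_3=\Ocal(\delta^3)$ by \eqref{eq:R3} and the a priori bound. The self-interaction $2\Mcal_{D_\delta}^{\omega}[\ba]$ only affects $R_1,R_2$ at order $\delta^3$: on $S_\delta^f$ it is $\Ocal(\delta^2)$ by \eqref{eq:arg1} and one further use of the normal-circle cancellation, which after testing against $G_\omega(\Bx-\cdot)$ over $S_\delta^f$ gives $\Ocal(\delta^3)$; on the caps $S_\delta^a,S_\delta^b$ the surface measure is itself $\Ocal(\delta^2)$. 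Hence, modulo $\Ocal(\delta^3)$, one may replace $\ba$ by $2\nu\times\bE^i$ in $R_1$ and $R_2$ and insert the Taylor expansion \eqref{eq:exp1} of $\bE^i$ about $\Bz_y$ (with $|\By-\Bz_y|=\Ocal(\delta)$). On the facade the normal-circle symmetry annihilates the $\bE^i(\Bz_y)$-part at order $\delta$, so the surviving facade contributions are: from $R_1$, the first-order correction $\delta\,\nu\times\!\bigl(\nabla\bE^i(\Bz_y)(\tdy-\Bz_{\tilde y})\bigr)$, which yields the first term of \eqref{eq:aaa1}; from $R_2$, the leading part $\nu\times\bE^i(\Bz_y)$ paired with the factor $\nabla G_\omega(\Bx-\Bz_{\tilde y})\cdot(\tdy-\Bz_{\tilde y})$, which yields the third term of \eqref{eq:aaa1}. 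On the caps there is no such symmetry and the blowup is a pure homothety ($d\Gs_y=\delta^2 d\Gs_{\tilde y}$), so the leading part $\nu\times\bE^i$ against $G_\omega(\Bx-\Bz_{\tilde y})$ (with $\Bz_{\tilde y}=P_0$, resp.\ $Q_0$) yields the second term of \eqref{eq:aaa1}, while every cap contribution carrying an extra factor $\tdy-\Bz_{\tilde y}$ is $\Ocal(\delta^3)$. Summing these gives \eqref{eq:aaa1}.

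The delicate points are the following. First, the $\delta$-uniform a priori bound on $\ba$ underpins everything, and proving it needs care in transferring \eqref{eq:trans1} to the fixed geometry, where the feedback operator behaves differently on the facade (where \eqref{eq:arg1} carries the extra factor $\delta$) and on the caps, yet must still be a uniformly small perturbation of an invertible operator. Second, in the second part one must verify that the pieces of $R_1,R_2$ that are, a priori, only $\Ocal(\delta^2)$ --- in particular the $\Mcal_{D_\delta}^{\omega}$-feedback supported near the junctions of $S_\delta^f$ with the caps, where the kernels of $\Mcal_{D_\delta}^{\omega}$ are least regular, and the term in $R_1$ generated by the curvature of $\Gamma_0$ --- either vanish or cancel; concretely, the curvature-generated $R_1$-term cancels the corresponding part of the $R_2$-term after one integration by parts along $\Gamma_0$, whose endpoint contributions are matched by the cap terms. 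This term-matching, rather than any single estimate, is the main obstacle.
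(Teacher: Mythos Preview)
Your outline is correct and rests on the same three ingredients the paper uses: the asymptotics of $\Mcal_{D_\delta}^{\omega}$ from Lemma~\ref{le:1}, the normal--circle cancellation $\int_0^{2\pi}\nu_{\tilde y}\,d\vartheta=0$ (the paper's \eqref{eq:techpt1}), and the Taylor decomposition $R_1+R_2+R_3$ of \eqref{eq:I1I2}--\eqref{eq:R3}. The derivation of \eqref{eq:thmprin1} you sketch is essentially identical to the paper's.

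The one organizational difference worth noting concerns the second assertion. You keep $\ba$ implicit and track the feedback $2\Mcal_{D_\delta}^{\omega}[\ba]$ term--by--term through $R_1$ and $R_2$; this is why you end up anticipating a ``curvature--generated $R_1$--term'' and an integration--by--parts cancellation along $\Gamma_0$ with endpoint matching at the caps. The paper avoids all of this by first iterating the integral equation once to obtain a \emph{pointwise} asymptotic for $\tilde\ba$ on $S^f$ (their \eqref{eq:tmpexp1}, which under $\Phi=0$ collapses to $\tilde\ba(\tdy)=2\,\nu_{\tilde y}\times\bE^i(\Bz_{\tilde y})+2\delta\,\nu_{\tilde y}\times\bigl(\nabla\bE^i(\Bz_{\tilde y})(\tdy-\Bz_{\tilde y})\bigr)+\Ocal(\delta^2)$), and only then substituting into the already--expanded single--layer potential \eqref{eq:ddd2}. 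In that order the would--be $\Ocal(\delta^2)$ obstructions you flag never appear as separate pieces: the feedback term $4\delta\,\Mcal_{S^f}^{\omega}[\nu\times\bE^i(\Bz)]$ vanishes identically by one more application of \eqref{eq:techpt1}, and everything else is already in the $\Ocal(\delta^2)$ remainder of $\tilde\ba$. So the ``term--matching obstacle'' is an artifact of the order of operations; switching to the paper's order (expand $\tilde\ba$ first, then substitute) turns the proof into a direct computation with no cancellations beyond the normal--circle identity.
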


\begin{proof}
Recall that $-\frac{I}{2}+\Mcal_{D_\delta}^{\omega}$ is invertible on ${\mathrm{TH}}^{-1/2}_{\mathrm{div}} (\p D_\delta)$ (see, e.g. \cite{Grie08}).
By \eqnref{eq:trans1}, we see that
$$
\ba (\Bx)=\Big(-\frac{I}{2}+\Mcal_{D_\delta}^{\omega}\Big)^{-1}\Big[\mathbf\nu_{\mathbf{y}}\times (\bE_\delta-\bE^i)(\By)\Big|_{\partial D_\delta}^+\Big](\Bx).
$$
Using the results in Lemma \ref{le:1} and the expansion of $\bE^i$ in \eqref{eq:exp1}, we have for $\Bx\in S_\delta^f$ that
\begin{align}
\tilde\ba (\tdx)=&-2\ti\Phi(\tdx)+2 \nu_{\tilde{\mathbf{x}}}\times \bE^i(\Bz_{\tilde{x}}) - 4\delta
\Mcal_{S^f}^{\omega}[\ti\Phi](\tdx)+2\delta\nu_{\tilde{\mathbf{x}}}\times \big(\nabla \bE^i(\Bz_{\tilde{x}})(\tdx-\Bz_{\ti{x}})\big) \nonumber\\
&+ 4\delta\Mcal_{S^f}^{\omega}[\mathbf\nu_{\tilde{\mathbf{y}}}\times \bE^i(\Bz_{\tilde{y}})](\tdx)+\Ocal(\delta^2(\|\ti\Phi\|_{{\mathrm{TH}}^{-1/2}(\p D)}+1)).\label{eq:tmpexp1}
\end{align}
Note that
\beq\label{eq:techpt1}
\int_{S^f}G_\omega(\Bx-\Bz_{\tilde{y}})(\nu_{\tilde{\mathbf{y}}}\times \bE^i(\Bz_{\tilde{y}}))\, d\Gs_{\ti y}
=\int_{\GG_0}G_\omega(\Bx-\Bz_{\tilde{y}})\int_0^{2\pi}\nu_{\tilde{\mathbf{y}}}\, d\vartheta \times\bE^i(\Bz_{\tilde{y}})\, d r=0,
\eeq
where $(r,\vartheta)$ stand for the polar coordinates, and this
together with \eqnref{eq:tmpexp1} readily implies \eqnref{eq:thmprin1}.

Next, if $\tilde\Phi(\tdy)=\Phi(\By)=0$, then it can be seen from \eqnref{eq:arg1} and \eqref{eq:tmpexp1} that
%assumed to be $C^2$-smooth around $\GG_0$,
$$
\|\tilde\ba \|_{\mathrm{TH}^{-3/2}(\p D)}\leq C,
$$
where $C$ is a positive constant depending only on $D$ and $\omega$.
By using our earlier results in \eqnref{eq:I1I2} and \eqref{eq:R3}, one can first show that
\begin{equation}\label{eq:ddd2}
\begin{split}
\Scal_{D_\delta}^{\omega}[\ba ](\Bx)
=& \int_{\p D_\delta}G_{\omega}(\Bx-\Bz_y)\ba (\By)\, d\Gs_y -\int_{\p D_\delta}\big(\nabla G_{\omega}(\Bx-\Bz_y)\cdot(\By-\Bz_y)\big)\ba (\By)\, d\Gs_y +\Ocal(\delta^3) \\
=&\delta\int_{S^f}G_{\omega}(\Bx-\Bz_{\tilde{y}})\tilde{\ba }(\tdy)d\Gs_{\tilde{y}} +\delta^2\Big(\int_{S^a\cup S^b}G_{\omega}(\Bx-\Bz_{\tilde{y}})\tilde{\ba }(\tdy)\, d\Gs_{\tilde{y}}\\
&-\int_{S^f}\big(\nabla G_{\omega}(\Bx-\Bz_{\tilde{y}})\cdot(\tdy-\Bz_{\tilde{y}})\big)\tilde{\ba }(\tdy)
\, d\Gs_{\tilde{y}}\Big)+\Ocal(\delta^3).
\end{split}
\end{equation}
By using \eqnref{eq:tmpexp1} we have for $\By\in S_\delta^f$ that
\beq\label{eq:tmpexp2}
\tilde\ba (\tdy)=2 \nu_{\tilde{\mathbf{y}}}\times \bE^i(\Bz_{\tilde{y}}) +2\delta\nu_{\tilde{\mathbf{y}}}\times \big(\nabla \bE^i(\Bz_{\tilde{y}})(\tdy-\Bz_{\ti{y}})\big)+\Ocal(\delta^2).%+ 4\delta\Mcal_{S^f}^{\omega}[\mathbf\nu_{\tilde{x}}\times \bE^i(\Bz_{\tilde{x}})](\tdy)
\eeq
Substituting \eqnref{eq:tmpexp2} into \eqnref{eq:ddd2} and using \eqnref{eq:techpt1} one can easily obtain
\begin{align*}
\Scal_{D_\delta}^{{\omega}}[\ba ](\Bx)
=&2\delta^2\Big(\int_{S^f}G_{\omega}(\Bx-\Bz_y)\nu_{\tilde{\mathbf{y}}}\times \big(\nabla \bE^i(\Bz_{\tilde{y}})(\tdy-\Bz_{\ti{y}})\big)\, d\Gs_y \\
& \quad \quad +\int_{S^a\cup S^b}G_{\omega}(\Bx-\Bz_{\tilde{y}})\nu_{\tilde{\mathbf{y}}}\times \bE^i(\Bz_{\tilde{y}}) \, d\Gs_{\tilde{y}}\\
& \quad \quad  -\int_{S^f}\big(\nabla G_{\omega}(\Bx-\Bz_{\tilde{y}})\cdot(\tdy-\Bz_{\tilde{y}})\big)\nu_{\tilde{\mathbf{y}}}\times \bE^i(\Bz_{\tilde{y}})(\tdy)
\, d\Gs_{\tilde{y}}\Big)+\Ocal(\delta^3),
\end{align*}
which then completes the proof by using \eqnref{eq:vis1}.
\end{proof}

We continue with the estimates of $R_1$ and $R_2$ in \eqnref{eq:I1I2} for the proposed full-cloaking structure with
arbitrary but regular $\varepsilon_a$, $\mu_a$ and $\Gs_a$ in $D_{\delta/2}$. In what follows, we let $C$ denote a generic positive constant. It may change from one inequality to another inequality in our estimates. Moreover, it may depend on different parameters, but it does not depend on $\varepsilon_a$, $\mu_a$, $\Gs_a$ and $\mathbf{p}$, $\mathbf{d}$, $\hat{\Bx}$.

We first note that, by using \eqref{eq:tmpexp1} and \eqnref{eq:arg1} in Lemma \ref{le:1}, there holds
\beq\label{eq:estphi11}
\|\tilde\ba \|_{\mathrm{TH}^{-1/2}(\p D)}\leq C (\|\tilde\Phi\|_{\mathrm{TH}^{-1/2}(\p D)} +1).
\eeq
Next, by taking expansion around $\Bz\in \GG_0$ and using \eqnref{eq:thmprin1} one can show that for $\delta\in\mathbb{R}_+$ sufficiently small and $\|\Bx\|$ sufficiently large,
\begin{equation}\label{eq:ccc1}
\begin{split}
\|R_1\|=&\Big\|\nabla\times\int_{\p D_\delta}G_\omega(\Bx-\Bz_y)\ba (\By)\, d\Gs_y\Big\|\\
\leq& \delta\Big\|\nabla\times\int_{S^f} G_\omega(\Bx-\Bz_{\tilde{y}})
\tilde{\ba }(\tdy)\, d\Gs_{\tilde{y}}\Big\|+ C\delta^2\frac{1}{\|\Bx\|}\|\tilde\ba \|_{\mathrm{TH}^{-1/2}(\p D)}\\
\leq&C\delta\Big\|\nabla\times\int_{S^f} G_\omega(\Bx-\Bz_{\tilde{y}})
\tilde{\Phi}(\tdy)\, d\Gs_{\tilde{y}}\Big\|+ C\delta^2\frac{1}{\|\Bx\|}(\|\tilde\Phi\|_{\mathrm{TH}^{-1/2}(\p D)}+1) \\
\leq & C\delta\frac{1}{\|\Bx\|} \|\tilde{\Phi}\|_{\mathrm{TH}^{-1/2}(S^f)}+ C\delta^2\frac{1}{\|\Bx\|}(\|\tilde\Phi\|_{\mathrm{TH}^{-1/2}(\p D)}+1).
\end{split}
\end{equation}
By using Taylor's expansions again and \eqnref{eq:tmpexp1}, one can show the following estimation for $R_2$,
\begin{align}
\|R_2\|=& \Big\|\nabla\times\int_{\p D_\delta}\nabla G_\omega(\Bx-\Bz_y)\cdot(\By-\Bz_y)\ba (\By)\, d\Gs_y\Big\|\nonumber\\
%\leq & 2\delta^2\Big|\nabla\times\int_{S^f}\nabla G_\omega(\Bx-\Bz_{\tilde{y}})\cdot (\tdy-\Bz_{\tilde{y}})\tilde\Phi(\tdy)\, d\Gs_{\tilde{y}}\nonumber\\
%&+\nabla\times\int_{S^f}\nabla G_\omega(\Bx-\Bz_{\tilde{y}})\cdot(\tdy-\Bz_{\ti y}) \nu_{\ti y}\times \bE^i(\Bz_{\tilde{y}})\, d\Gs_{\tilde{y}}\Big|
%+ C\delta^3 \frac{1}{|\Bx|}(\|\tilde\Phi\|_{H^{-3/2}(\p D)}+1)
%\nonumber\\
\leq & C\delta^2\frac{1}{\|\Bx\|}(\|\tilde\Phi\|_{\mathrm{TH}^{-1/2}(S^f)}+1)+C\delta^3\frac{1}{\|\Bx\|} (\|\tilde\Phi\|_{\mathrm{TH}^{-1/2}(\p D)}+1). \label{eq:sharp1}
\end{align}
Hence, by applying the estimates in \eqref{eq:R3}, \eqref{eq:ccc1} and \eqref{eq:sharp1} to \eqref{eq:I1I2}, we have
\beq\label{eq:mainest1}
\|\bE_\delta-\bE^i\|\leq C\frac{\delta}{\|\Bx\|} \|\tilde{\Phi}\|_{\mathrm{TH}^{-1/2}(S^f)}+C\delta^2\frac{1}{\|\Bx\|}(\|\tilde\Phi\|_{\mathrm{TH}^{-1/2}(\p D)}+1)
\eeq
for $\|\Bx\|$ sufficiently large.
With \eqnref{eq:mainest1} at hand, one readily has
\begin{lem}\label{le:farfield51}
The scattering amplitude $\mathbf{A}^{\delta}_\infty(\hat{\Bx}; \mathbf{p}, \mathbf{d})$ corresponding to the scattering configuration described in Theorem~\ref{th:main1} satisfies
\beq\label{eq:estfnl1}
\|\mathbf{A}^{\delta}_\infty(\hat{\Bx}; \mathbf{p},\mathbf{d})\|\leq C\delta \|\tilde{\Phi}\|_{\mathrm{TH}^{-1/2}(S^f)}+C\delta^2(\|\tilde\Phi\|_{\mathrm{TH}^{-1/2}(\p D)}+1),
\eeq
where $C$ depends only on $D$ and $\omega$.
\end{lem}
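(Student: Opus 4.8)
The plan is to read the scattering amplitude off the pointwise decay estimate \eqref{eq:mainest1} by passing to the far-field limit; essentially all of the analytic work has already been carried out in producing \eqref{eq:mainest1}, so what remains is bookkeeping. I would start from the asymptotic expansion \eqref{eq:farfieldptt1}, which applies verbatim to the solution $\bE_\delta$ of the virtual system \eqref{eq:sys1} with amplitude $\mathbf{A}^\delta_\infty$: along the ray $\Bx=R\,\hat{\Bx}$ with $R=\|\Bx\|$,
\[
\|\Bx\|\,\big(\bE_\delta-\bE^i\big)(\Bx) = e^{i\omega\|\Bx\|}\,\mathbf{A}^\delta_\infty(\hat{\Bx};\bE^i) + \Ocal\!\left(\frac{1}{\|\Bx\|}\right),
\]
so that, taking norms and using $|e^{i\omega\|\Bx\|}|=1$,
\[
\big\|\mathbf{A}^\delta_\infty(\hat{\Bx};\mathbf{p},\mathbf{d})\big\| = \lim_{\|\Bx\|\to\infty}\|\Bx\|\,\big\|\big(\bE_\delta-\bE^i\big)(\Bx)\big\|,
\]
where I have invoked \eqref{eq:ffp} and the normalization $\|\mathbf{p}\|=1$ to identify $\mathbf{A}^\delta_\infty(\hat{\Bx};\mathbf{p},\mathbf{d})$ with $\mathbf{A}^\delta_\infty(\hat{\Bx};\bE^i)$.

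Next I would fix an observation direction $\hat{\Bx}\in\mathbb{S}^2$ and apply \eqref{eq:mainest1} along the corresponding ray for all $R=\|\Bx\|$ large enough; multiplying that inequality by $\|\Bx\|$ cancels the $1/\|\Bx\|$ factors on the right and gives
\[
\|\Bx\|\,\big\|\big(\bE_\delta-\bE^i\big)(\Bx)\big\| \leq C\delta\,\|\tilde{\Phi}\|_{\mathrm{TH}^{-1/2}(S^f)} + C\delta^2\big(\|\tilde{\Phi}\|_{\mathrm{TH}^{-1/2}(\p D)}+1\big),
\]
with $C$ depending only on $D$ and $\omega$. The point to stress is that the right-hand side carries no dependence on $\hat{\Bx}$, nor on $\mathbf{p}$ or $\mathbf{d}$, since the contribution of $\bE^i$ over the fixed compact set $\p D$ entering \eqref{eq:mainest1} is absorbed into the additive constant uniformly for $\mathbf{p},\mathbf{d}\in\mathbb{S}^2$. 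Letting $\|\Bx\|\to\infty$ and using the limit identity above then yields exactly \eqref{eq:estfnl1}.

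The only points needing a modicum of care are (i) that the $\Ocal(\|\Bx\|^{-2})$ remainder in \eqref{eq:farfieldptt1} is negligible after multiplication by $\|\Bx\|$, and that the far-field limit is attained uniformly in the observation direction, so that no loss is incurred in passing to the limit; and (ii) that the amplitude appearing here is indeed the one of the virtual system \eqref{eq:sys1}---which is built into the notation $\mathbf{A}^\delta_\infty$ and is consistent with \eqref{eq:eqvp}. Since no further estimate is required, I do not expect any genuine obstacle: the substantive difficulty of the argument lies entirely upstream, in the term-by-term control of $R_1,R_2,R_3$ from \eqref{eq:I1I2} via \eqref{eq:R3}, \eqref{eq:ccc1} and \eqref{eq:sharp1} that underlies \eqref{eq:mainest1}.
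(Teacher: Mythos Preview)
Your proposal is correct and matches the paper's approach: the paper simply states that \eqref{eq:estfnl1} follows directly from \eqref{eq:mainest1}, which is exactly the far-field limit argument you spell out. All the substantive work indeed lies upstream in the estimates \eqref{eq:R3}, \eqref{eq:ccc1}, \eqref{eq:sharp1} feeding into \eqref{eq:mainest1}.
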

%\begin{proof}
%Let $C$ be a generic constant which may change in context. We mention that $C$ only depends on
%$\omega$ and $\bE^i(\Bz)$, where $\Bz\in\GG_0$.
%Define $\hat{\Bx}:=\Bx/|\Bx|$. It follows directly from \eqnref{eq:farfieldpt1} that
%\begin{align*}
%|\mathbf{A}^{\delta}_\infty(\hat{\Bx})|\leq & C |\Bx| |\bE_\delta-\bE^i|
%\leq C\delta \|\tilde{\Phi}\|_{\mathrm{TH}^{-1/2}(S^f)}+C\delta^2(\|\tilde\Phi\|_{\mathrm{TH}^{-1/2}(\p D)}+1)
%\end{align*}
%which completes the proof.
%\end{proof}

\subsection{Proof of Theorem~\ref{th:main1}}
By Lemma \ref{le:farfield51}, it is straightforward to see that in order to derive the estimate of the scattering
amplitude $\mathbf{A}_\infty^\delta$, it suffices for us to derive the corresponding estimate of $\|\ti\Phi\|_{\mathrm{TH}^{-1/2}}$. To that end, we shall need the following lemma, whose proof can be found in \cite{BLZ}.
\begin{lem}\label{le:estL1}
Let $B_R$ be a central ball of radius $R$ such that $D_\delta\Subset B_R$. Then the solutions
$E_\delta, H_\delta\in H_{loc}(\mbox{\emph{curl}};\RR^3)$ to \eqnref{eq:sys1} verify
\begin{align}
&\int_{D_\delta\setminus\overline{D}_{\delta/2}} \Gs_l \bE_\delta\cdot \overline{\bE}_\delta\, d\Bx +
\int_{D_{\delta/2}} \Gs_a \bE_\delta\cdot \overline{\bE_\delta}\, d\Bx\nonumber \\
=&\int_{\p B_R}(\nu\times\overline{\bE_\delta^+})\cdot (\nu\times \nu\times \bH_\delta^+)\, d\Gs_x+
\Re\int_{\p D_\delta}(\nu\times\overline{\bE^i})\cdot(\nu\times \nu\times \bH_\delta^+)\Big|_+\, d\Gs_x\nonumber\\
&+ \Re\int_{\p D_\delta} (\nu\times\overline{\bE_\delta^+})\Big|_+\cdot(\nu\times\nu\times \bH^i)\, d\Gs_x. \label{eq:estL1}
\end{align}
\end{lem}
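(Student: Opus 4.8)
The plan is to obtain \eqref{eq:estL1} as a Poynting-type energy identity: integrate the divergence of $\bE_\delta\times\overline{\bH_\delta}$ over the pertinent regions and carefully separate the incident and scattered contributions along $\p D_\delta$. (A self-contained derivation is also recorded in \cite{BLZ}; I sketch the argument here.)

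First I would use the pointwise identity $\nabla\cdot(\bE\times\overline{\bH})=\overline{\bH}\cdot(\nabla\times\bE)-\bE\cdot(\nabla\times\overline{\bH})$ with $(\bE,\bH)=(\bE_\delta,\bH_\delta)$ in $D_\delta$. Substituting the Maxwell relations from \eqref{eq:sys1}, namely $\nabla\times\bE_\delta=i\omega\mu_\delta\bH_\delta$ and $\nabla\times\bH_\delta=-i\omega\varepsilon_\delta\bE_\delta+\Gs_\delta\bE_\delta$, and using that $\varepsilon_\delta$ and $\mu_\delta$ are real symmetric positive definite (so that $\overline{\bH_\delta}\cdot\mu_\delta\bH_\delta$ and $\bE_\delta\cdot\varepsilon_\delta\overline{\bE_\delta}$ are real-valued), one finds $\Re\,\nabla\cdot(\bE_\delta\times\overline{\bH_\delta})=-\Gs_\delta\,\bE_\delta\cdot\overline{\bE_\delta}$ in $D_\delta$. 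Integrating over $D_\delta$, splitting $\Gs_\delta$ via \eqref{eq:struc} into $\Gs_l$ on $D_\delta\setminus\overline{D}_{\delta/2}$ and $\Gs_a$ on $D_{\delta/2}$, and invoking the Green identity for $H(\mathrm{curl})$ vector fields over the $C^3$ domains involved, I would arrive at
\[
\int_{D_\delta\setminus\overline{D}_{\delta/2}}\Gs_l\,\bE_\delta\cdot\overline{\bE_\delta}\,d\Bx+\int_{D_{\delta/2}}\Gs_a\,\bE_\delta\cdot\overline{\bE_\delta}\,d\Bx=-\Re\int_{\p D_\delta}(\bE_\delta\times\overline{\bH_\delta})\cdot\nu\big|_{-}\,d\Gs_x.
\]
Here the flux through the interior interface $\p D_{\delta/2}$ cancels because $(\bE\times\overline{\bH})\cdot\nu$ depends only on the tangential traces of $\bE$ and $\bH$, which are continuous across $\p D_{\delta/2}$ by $\bE_\delta,\bH_\delta\in H_{loc}(\mathrm{curl};\RR^3)$; by the same continuity across $\p D_\delta$ I may replace the interior trace $|_{-}$ by the exterior trace $|_{+}$.

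Next I would expand $\bE_\delta=\bE^i+\bE_\delta^+$ and $\bH_\delta=\bH^i+\bH_\delta^+$ on $\p D_\delta|_{+}$, so that $(\bE_\delta\times\overline{\bH_\delta})\cdot\nu$ splits into four scalar fluxes. The purely incident one vanishes because $(\bE^i,\bH^i)$ solves the homogeneous free-space Maxwell system ($\varepsilon_0=\mu_0=\BI_{3\times 3}$, $\Gs_0=0$), whence $\Re\,\nabla\cdot(\bE^i\times\overline{\bH^i})=0$ in $D_\delta$ and the Green identity gives zero. For the purely scattered flux I would apply the Green identity in the shell $B_R\setminus\overline{D}_\delta$, where $(\bE_\delta^+,\bH_\delta^+)$ likewise solves the homogeneous free-space system so that $\Re\,\nabla\cdot(\bE_\delta^+\times\overline{\bH_\delta^+})=0$; this transfers that flux from $\p D_\delta$ to $\p B_R$. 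The remaining two mixed fluxes stay on $\p D_\delta$. Finally, to recast everything in the form of \eqref{eq:estL1}, I would verify the elementary identity $\Re\big[(\nu\times\overline{\bE})\cdot(\nu\times\nu\times\bH)\big]=-\Re\big[(\bE\times\overline{\bH})\cdot\nu\big]$ (using $\nu\times(\nu\times\bH)=(\nu\cdot\bH)\nu-\bH$ together with the scalar triple product rule), apply it to each of the three surviving fluxes, and take real parts of the resulting chain of equalities, the left-hand side being real.

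I do not expect a conceptual obstacle here; the statement is in essence a Green's-identity computation. The part that requires the most care is the rigorous justification of the integrations by parts and the attendant trace/transmission bookkeeping in the $H(\mathrm{curl})$ setting — in particular, matching interior and exterior traces on $\p D_\delta$ consistently and confirming the cancellation of the interface flux through $\p D_{\delta/2}$ — together with keeping track of the signs and of where the complex conjugate sits when passing between $(\bE\times\overline{\bH})\cdot\nu$ and $(\nu\times\overline{\bE})\cdot(\nu\times\nu\times\bH)$.
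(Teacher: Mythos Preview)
Your approach is correct and is precisely the standard Poynting--identity argument; the paper does not give an independent proof but refers to \cite{BLZ}, where the same divergence-theorem computation is carried out. One small remark: your derivation naturally produces a real part in front of the $\p B_R$ integral as well, whereas the stated identity \eqref{eq:estL1} omits $\Re$ on that first term; since the left-hand side is real and the other two right-hand terms carry $\Re$, the two formulations agree, and in the subsequent applications only the real-part version is actually used.
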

\begin{rem}
It is remarked that the last two terms in the RHS of \eqref{eq:estL1} in \cite{BLZ} were
$$\Re\int_{\p B_R}(\nu\times\overline{\bE^i})\cdot(\nu\times \nu\times \bH_\delta^+) d\Gs_x
+ \Re\int_{\p B_R} (\nu\times\overline{\bE_\delta^+})\cdot(\nu\times\nu\times \bH^i)\, d\Gs_x .$$
Here, we modify the two terms for the convenience of the present study.
\end{rem}

Define a boundary operator $\GL$ such that
\beq\label{eq:bndop1}
\GL(\nu\times \bE_\delta|_{\p B_R})=\nu\times \bH_\delta|_{\p B_R} : \quad \mathrm{TH}_{\mathrm{div}}^{-1/2}(\p B_R)\rightarrow \mathrm{TH}_{\mathrm{div}}^{-1/2}(\p B_R).
\eeq
It is well-known that $\GL$ is a bounded operator in $\mathrm{TH}_{\mathrm{div}}^{-1/2}(\p B_R)$ (cf. \cite{CK,Ned}).
By using Lemma \ref{le:estL1}, one can show that
\begin{lem}
Let $\bE_\delta$ and $\bH_\delta$ be solutions to the system \eqnref{eq:sys1}, where $D_\delta$ is the virtual domain
described at the beginning of this section, then we have
\begin{align}
\int_{D^f_\delta\setminus \overline{D}_{\delta/2}} \|\bE_\delta\|^2\, d\Bx \leq & C \delta^{1-t}
\Big(\|\nu\times \bE_\delta^+\|_{\mathrm{TH}_{\mathrm{div}}^{-1/2}(\p B_R)}^2
+\delta\|\ti\ba\|_{\mathrm{TH}^{-1/2}(\p D)}\Big), \label{estL2}
\end{align}
and
\begin{align}
\int_{D^c_\delta\setminus \overline{D}_{\delta/2}} \|\bE_\delta \|^2\, d\Bx \leq & C \delta^{2-t}\Big(\|\nu\times \bE_\delta^+\|_{\mathrm{TH}_{\mathrm{div}}^{-1/2}(\p B_R)}^2
+\delta\|\ti\ba\|_{\mathrm{TH}^{-1/2}(\p D)}\Big), \ c\in\{a, b\},\label{estL3}
\end{align}
where the constant $C$ depends only on $D$, $\omega$ and $R$.
\end{lem}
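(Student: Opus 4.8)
The strategy is to feed the quadratic identity of Lemma~\ref{le:estL1} into the ellipticity of the lossy parameters and then to estimate the three boundary terms on its right-hand side. First I would discard the non-negative term $\int_{D_{\delta/2}}\Gs_a\,\bE_\delta\cdot\overline{\bE_\delta}\,d\Bx$ (legitimate because $\Gs_a$ is a symmetric positive definite matrix), so that Lemma~\ref{le:estL1} reduces to
\[
\int_{D_\delta\setminus\overline{D}_{\delta/2}}\Gs_l\,\bE_\delta\cdot\overline{\bE_\delta}\,d\Bx\ \le\ \mathcal{R},
\]
where $\mathcal{R}$ denotes the right-hand side of \eqref{eq:estL1}. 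Next I would extract the pointwise lower bound for $\Gs_l=\delta^{t}|\BB|\BB^{-1}$ from Lemma~\ref{le:2}: on $D_\delta^f$ the eigenvalues of $\BB$ are $1$ and $1/\delta$ (the latter with multiplicity two) while $|\BB|=\delta^{-2}$ by \eqref{eq:jacobdet1}, so $\Gs_l\bV\cdot\overline{\bV}\ge\delta^{t-1}\|\bV\|^2$; on $D_\delta^a\cup D_\delta^b$ one has $\BB=\delta^{-1}\mathbf{I}_{3\times 3}$ and $|\BB|=\delta^{-3}$, whence $\Gs_l=\delta^{t-2}\mathbf{I}_{3\times 3}$ and $\Gs_l\bV\cdot\overline{\bV}=\delta^{t-2}\|\bV\|^2$. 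Since the integrand is non-negative, restricting the displayed inequality to the facade part, respectively to each end-cap part, of $D_\delta\setminus\overline{D}_{\delta/2}$ and dividing by the corresponding lower bound gives
\[
\int_{D_\delta^f\setminus\overline{D}_{\delta/2}}\|\bE_\delta\|^2\,d\Bx\le\delta^{1-t}\,\mathcal{R},\qquad \int_{D_\delta^c\setminus\overline{D}_{\delta/2}}\|\bE_\delta\|^2\,d\Bx\le\delta^{2-t}\,\mathcal{R}\quad(c\in\{a,b\}).
\]
Hence it suffices to prove $\mathcal{R}\le C\big(\|\nu\times\bE_\delta^+\|_{\mathrm{TH}_{\mathrm{div}}^{-1/2}(\p B_R)}^2+\delta\|\ti\ba\|_{\mathrm{TH}^{-1/2}(\p D)}\big)$.

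For the term of $\mathcal{R}$ supported on $\p B_R$, I would use that the scattered pair $(\bE_\delta^+,\bH_\delta^+)$ is a radiating solution of the homogeneous Maxwell system in $\RR^3\setminus\overline B_R$, so that $\nu\times\bH_\delta^+=\GL(\nu\times\bE_\delta^+)$ on $\p B_R$ with $\GL$ the bounded operator of \eqref{eq:bndop1}; together with the natural duality on $\p B_R$ this bounds the term by $\|\GL\|\,\|\nu\times\bE_\delta^+\|_{\mathrm{TH}_{\mathrm{div}}^{-1/2}(\p B_R)}^2$. For the two terms supported on $\p D_\delta$, I would insert the integral representation \eqref{eq:vis1}--\eqref{eq:vis2}, use the jump relation of Proposition~\ref{propjumpM} to write $\nu\times\bE_\delta^+|_+=-\ba/2+\Mcal_{D_\delta}^\omega[\ba]$ and $\nu\times\bH_\delta^+|_+=\tfrac{1}{i\omega}\Lcal_{D_\delta}^\omega[\ba]$, apply the rescaling relations \eqref{eq:arg1}--\eqref{eq:arg2} of Lemma~\ref{le:1} and the surface-measure identity \eqref{eq:areapro411}, and pair against the smooth, $\delta$-independent traces $\nu\times\overline{\bE^i}$ and $\nu\times\nu\times\bH^i$, which are bounded on $\overline D$ since $\|\mathbf{p}\|=1$. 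On $S_\delta^f$ one has $d\Gs_x=\delta\,d\Gs_{\tdx}$ while the $\Mcal$- and $\Lcal$-parts carry an extra factor $\delta$ (the $-\ba/2$ part carries none), so these contributions are of order $\delta\|\ti\ba\|$ or $\delta^{2}\|\ti\ba\|$; on $S_\delta^c$ one has $d\Gs_x=\delta^{2}\,d\Gs_{\tdx}$ while the leading part of $\nu\times\bH_\delta^+|_+$ there carries a factor $\delta^{-1}$ by \eqref{eq:arg2}, so its contribution is again of order $\delta\|\ti\ba\|$ and the remaining pieces are of order $\delta^{2}\|\ti\ba\|$. Wherever the hypersingular operators $\Lcal_{S^f}^\omega$ or $\nu\times\nabla\times\nabla\times\Acal_{S^c}$ occur, I would integrate by parts on the fixed boundaries $S^f,S^a,S^b$ to shift the tangential derivatives off $\ti\ba$ and onto the smooth test fields, so that the pairing is controlled by $\|\ti\ba\|_{\mathrm{TH}^{-1/2}(\p D)}$ alone. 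Summing the three pieces yields the required bound on $\mathcal{R}$, and combining it with the two displays above proves \eqref{estL2}--\eqref{estL3}, with $C$ depending only on $D$, $\omega$ and $R$.

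The main obstacle is the bookkeeping in the two $\p D_\delta$-terms: one has to control the hypersingular operator $\Lcal_{D_\delta}^\omega$ contained in $\nu\times\bH_\delta^+$, check that the factor $\delta^{-1}$ it produces on the end-caps is exactly cancelled by the $\delta^{2}$ from the surface measure there (so that no negative power of $\delta$ survives and the worst contribution is precisely of order $\delta\|\ti\ba\|$), and keep track of Sobolev indices accurately enough that the final estimate involves only $\|\ti\ba\|_{\mathrm{TH}^{-1/2}(\p D)}$ and not the stronger graph norm of $\mathrm{TH}_{\mathrm{div}}^{-1/2}(\p D)$ --- which is exactly why the integration by parts against the smooth incident traces is needed. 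By comparison, the $\p B_R$-term and the ellipticity step are routine once the eigenstructure of $\BB$ from Lemma~\ref{le:2} is at hand.
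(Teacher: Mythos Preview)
Your proposal is correct and follows essentially the same route as the paper: discard the $\Gs_a$ term, bound the $\p B_R$ contribution via the boundedness of $\GL$, expand the two $\p D_\delta$ terms through the representations \eqref{eq:vis1}--\eqref{eq:vis2} combined with the asymptotics \eqref{eq:arg1}--\eqref{eq:arg2} and the surface-measure scaling \eqref{eq:areapro411}, and finally extract the $\delta^{1-t}$ and $\delta^{2-t}$ factors from the eigenstructure of $\Gs_l=\delta^t|\BB|\BB^{-1}$. Your explicit eigenvalue computation for $\Gs_l$ and your observation that integration by parts against the smooth traces $\bE^i,\bH^i$ is what downgrades the hypersingular pairing to a $\mathrm{TH}^{-1/2}$ estimate are exactly the points the paper leaves implicit; the paper in fact records the slightly sharper bound $\mathbb{R}_2\le C\delta^2\|\ti\ba\|$ for the second boundary term, but your $O(\delta\|\ti\ba\|)$ already suffices for \eqref{estL2}--\eqref{estL3}.
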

\begin{proof}
First, by \eqnref{eq:estL1} and the fact that $\GL$ is bounded in $\mathrm{TH}_{\mathrm{div}}^{-1/2}(\p B_R)$, there holds
\begin{align*}
\int_{D_\delta\setminus\overline{D}_{\delta/2}} \Gs_l \bE_\delta\cdot \overline{\bE}_\delta\, d\Bx \leq
C\|\nu\times \bE_\delta^+\|_{\mathrm{TH}_{\mathrm{div}}^{-1/2}(\p B_R)}^2
+\mathbb{R}_1 + \mathbb{R}_2,
\end{align*}
where by using \eqnref{eq:vis2} and \eqnref{eq:arg2} one further has
\begin{align*}
\mathbb{R}_1&=\Big|\Re\int_{\p D_\delta}(\nu\times\overline{\bE^i})\cdot(\nu\times \nu\times \bH_\delta^+)\Big|_+ d\Gs_x\Big|
=\Big|\Re\int_{\p D_\delta}\overline{\bE^i}\cdot(\nu\times \bH_\delta^+)\Big|_+\, d\Gs_x\Big|\\
&\leq \frac{\delta}{\omega}\Big |\int_{S^a\cup S^b}\overline{\bE^i}(\Bz_{\ti{\mathbf{x}}})\cdot(\nu_{\ti{\mathbf{x}}}\times\nabla_{\ti{\mathbf{x}}}\times\nabla_{\ti{\mathbf{x}}}\times\Acal_{S^{a}\cup S^{b}}[\ti\ba](\tdx))\, d\Gs_{\ti x}\Big| +C\delta^2\|\ti\ba\|_{\mathrm{TH}^{-1/2}(\p D)},
\end{align*}
and by using \eqnref{eq:arg1} one further has
\begin{align*}
\mathbb{R}_2&=\Big|\Re\int_{\p D_\delta} (\nu\times\overline{\bE_\delta^+})\Big|_+\cdot(\nu\times\nu\times \bH^i)\, d\Gs_x\Big|
=\Big|\Re\int_{\p D_\delta}\overline{\bE_\delta^+}\Big|_+\cdot(\nu\times \bH^i)\, d\Gs_x\Big|\\
&\leq C\delta^2\|\ti\ba\|_{\mathrm{TH}^{-1/2}(\p D)}.
\end{align*}
Combining those with \eqnref{eq:jocob11}, \eqnref{eq:eigenB1}, together with the use of the definition of $\Gs_l$ in \eqnref{eq:loss1}, we can complete the proof.
\end{proof}

It is remarked that in the estimate \eqref{estL3}, the dependence on the artificial $R$ of the generic constant $C$ can obviously be absorbed into the dependence on $D$.

In what follows, we shall estimate $\|\tilde\Phi\|_{\mathrm{TH}^{-1/2}(S^f)}$ and $\|\tilde\Phi\|_{\mathrm{TH}^{-1/2}(S^a\cup S^b)}$, separately.
Clearly, it suffices to estimate  $\|\tilde\Phi\|_{H^{-1/2}(S^f)^3}$ and $\|\tilde\Phi\|_{H^{-1/2}(S^a\cup S^b)^3}$, respectively.
We recall that the $H^{-1/2}(\p D)^3$-norm of the function $\tilde\Phi(\tdx)$ is defined as follows
\beq\label{eq:defnorm1}
\|\tilde\Phi\|_{H^{-1/2}(\p D)^3}= \sup_{\|\varphi\|_{H^{1/2}(\p D)^3\leq 1}}\Big|\int_{\p D} \tilde\Phi(\tdx)\cdot\overline{\varphi}(\tdx)\, d\Gs_{\tilde{x}}\Big|.
\eeq
Moreover, the $H^{-1/2}(S^c)^3$-norm of $\tilde\Phi$ for $c\in\{f, a, b\}$ is given as
\beq\label{eq:defnorm2}
\|\tilde\Phi\|_{H^{-1/2}(S^c)^3}:=\sup_{\|\varphi\|_{H^{1/2}_0(S^c)^3}\leq 1} \Big|\int_{S^c} \tilde\Phi(\tdx)\cdot\overline{\varphi}(\tdx)\, d\Gs_{\tilde{x}}\Big|,
\eeq
where $H^{1/2}_0(S^c)^3$ denotes the set of $H^{1/2}(S^c)^3$-functions which have zero extensions to the whole boundary $\partial D$.
We refer to \cite{Ada,Lio,Wlok87} for more relevant discussions on the Sobolev spaces.
To proceed, we shall first establish the following important auxiliary Sobolev extension result, which is a localized version of Lemma 3.4 in \cite{BLZ}.

\begin{lem}\label{le:extenlocal1}
Suppose $D$ is a simply connected domain with a $C^3$-smooth boundary $\p D$ and $S^c\Subset\p D$ is an open surface lying on $\p D$.
Let $D'$ be a simply connected
domain with a $C^3$-smooth boundary $\p D'$ which satisfies $S^c\cap \p D'=\emptyset$ and $D'\subset D$. Then for any $\psi\in H_0^{1/2}(S^c)^3$, there exists $\BW\in H^2(D)^3$ such that
\begin{align*}
&\nu\times \BW =0 & \mbox{on}\quad S^c, \\
&\nu\times(\nu \times (\nabla\times \BW)) = \nu\times (\nu\times \psi) & \mbox{on}\quad S^c, \\
&\|\BW\|_{H^2(D)^3}\leq C \|\psi\|_{H^{1/2}(S^c)^3}, &\\
&\BW=0 & \mbox{in} \quad D',
\end{align*}
where $C$ is a constant depending only on $D$.
\end{lem}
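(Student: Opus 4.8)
The plan is to deduce this localized statement from the non‑localized extension result of \cite{BLZ} (Lemma 3.4 there) by first extending $\psi$ by zero to all of $\partial D$, then applying that global result, and finally multiplying the resulting field by a smooth cut‑off that makes it vanish inside $D'$ while leaving the prescribed Cauchy data on $S^c$ untouched.

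In more detail, the first step is: since $\psi\in H_0^{1/2}(S^c)^3$, its extension by zero, call it $\Psi$, lies in $H^{1/2}(\partial D)^3$ and obeys $\|\Psi\|_{H^{1/2}(\partial D)^3}\le C\|\psi\|_{H^{1/2}(S^c)^3}$, with $\Psi=\psi$ on $S^c$ and $\Psi=0$ on $\partial D\setminus S^c$ --- this boundedness of the zero extension is exactly what the subscript ``$0$'' in the hypothesis buys us. The second step is to invoke Lemma 3.4 of \cite{BLZ} with datum $\Psi$, which yields some $\mathbf{W}_0\in H^2(D)^3$ with $\nu\times\mathbf{W}_0=0$ on $\partial D$, $\nu\times(\nu\times(\nabla\times\mathbf{W}_0))=\nu\times(\nu\times\Psi)$ on $\partial D$, and $\|\mathbf{W}_0\|_{H^2(D)^3}\le C\|\Psi\|_{H^{1/2}(\partial D)^3}\le C\|\psi\|_{H^{1/2}(S^c)^3}$.

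The third step is the localization. Because $S^c\Subset\partial D$, $D'\subset D$, and $\partial D'$ meets $S^c$ nowhere, the compact sets $\overline{S^c}$ and $\overline{D'}$ are disjoint (positively separated), so there is a cut‑off $\chi\in C^\infty(\overline D)$ with $\chi\equiv1$ on a relative neighborhood of $\overline{S^c}$ in $\overline D$ and $\chi\equiv0$ on $\overline{D'}$. I would then set $\mathbf{W}:=\chi\,\mathbf{W}_0$. The bound $\|\mathbf{W}\|_{H^2(D)^3}\le C\|\psi\|_{H^{1/2}(S^c)^3}$ follows from the product rule since $\chi$ is a fixed smooth function, and $\mathbf{W}\equiv0$ on $D'$ is immediate. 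The condition $\nu\times\mathbf{W}=0$ in fact holds on all of $\partial D$, because $\nu\times(\chi\mathbf{W}_0)=\chi\,(\nu\times\mathbf{W}_0)=0$ there, regardless of $\chi$; and on $S^c$, since $\chi\equiv1$ and $\nabla\chi\equiv0$ in a neighborhood of $S^c$, one has $\nabla\times\mathbf{W}=\chi\,\nabla\times\mathbf{W}_0+\nabla\chi\times\mathbf{W}_0=\nabla\times\mathbf{W}_0$ near $S^c$, whence $\nu\times(\nu\times(\nabla\times\mathbf{W}))=\nu\times(\nu\times(\nabla\times\mathbf{W}_0))=\nu\times(\nu\times\Psi)=\nu\times(\nu\times\psi)$ on $S^c$.

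The only delicate point is the requirement that $\chi$ be \emph{constant} (identically $1$), not merely equal to $1$ pointwise, in a full neighborhood of $S^c$: otherwise $\nabla\chi$ would contaminate the trace of $\nabla\times\mathbf{W}$ on $S^c$. This is why the separation of $S^c$ from $D'$ is essential, and it is the one geometric input that has to be checked; everything else is routine bookkeeping built on top of the cited non‑localized lemma and the standard mapping properties of zero extension and of multiplication by a smooth function on $H^2$.
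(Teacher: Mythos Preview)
Your proof takes a genuinely different route from the paper's. The paper also begins by zero-extending $\psi$ and invoking Lemma~3.4 of \cite{BLZ} to produce a global $\mathbf{W}_0$, but it then abandons $\mathbf{W}_0$ and instead follows the local-extension technique from the Addendum to Theorem~14.1 of Wloka~\cite{Wlok87}: it covers $\mathfrak{S}:=\mathrm{supp}(\psi)\Subset S^c$ by finitely many small patches $\mathfrak{S}_j$, builds local $H^2$-extensions $\mathbf{W}_j$ supported in $\mathfrak{S}_j\cap D$, sums them, and extends by zero to the rest of $D$. The resulting $\mathbf{W}$ is supported near $\mathrm{supp}(\psi)$ and therefore vanishes on $D'$ automatically. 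Your cut-off argument is more direct and avoids the Wloka machinery altogether; what it buys is simplicity, while the paper's construction buys a more explicit control of the support.

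One small technical point worth tightening: your separation claim ``$\overline{S^c}$ and $\overline{D'}$ are disjoint'' does not quite follow from the hypotheses as literally stated. From $S^c\cap\partial D'=\emptyset$ and $D'\subset D$ you get $S^c\cap\overline{D'}=\emptyset$, but $\partial D'$ could in principle meet the relative boundary $\overline{S^c}\setminus S^c$ inside $\partial D$, and then no smooth $\chi$ can be identically $1$ on a full neighborhood of $S^c$ while vanishing on $\overline{D'}$. The paper's route avoids this because $\mathrm{supp}(\psi)$ is \emph{compactly} contained in $S^c$, hence genuinely positively separated from $\overline{D'}$. In every application of the lemma in the paper the stronger separation does hold, so your argument goes through there; if you want it to match the lemma exactly as stated, take your cut-off to equal $1$ on a neighborhood of $\mathrm{supp}(\psi)$ rather than of $S^c$, and observe that on $S^c\setminus\mathrm{supp}(\psi)$ the extra term $\nabla\chi\times\mathbf{W}_0$ still needs handling --- this is precisely where the paper's local-patch construction earns its keep.
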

\begin{proof}
By zero-extending $\psi$ to $\p D$, we first have by Lemma 3.4 in \cite{BLZ} that
there exists $\BW_0\in H^2(D)^3$ satisifying
\begin{align*}
&\nu\times \BW_0 =0 & \mbox{on}\quad S^c, \\
&\nu\times(\nu \times (\nabla\times \BW_0)) = \nu\times (\nu\times \psi) & \mbox{on}\quad S^c, \\
&\|\BW_0\|_{H^2(D)^3}\leq C \|\psi\|_{H^{1/2}(S^c)^3}, &
\end{align*}
where  $C$ is a constant depending only on $D$.
Let $\mathfrak{S}=supp(\psi)$ denote the support of the function $\psi$. By definition there holds $\mathfrak{S}\Subset S^c$.
Next, we use exactly the same strategy as that in Addendum of Theorem 14.1 in \cite{Wlok87}. We cover $\mathfrak{S}$ with
finitely many, say $\mathfrak{N}$, small enough regions $\{\mathfrak{S}_j\}_{j=1}^{\mathfrak{N}}$ such that
$(\bigcup_{j=1}^\mathfrak{N}\mathfrak{S}_j)\cap \mathfrak{S}\subset S^c$. Then in each $\mathfrak{S}_j\cap D$ there
exits an appropriately chosen $\BW_j\in H^2((\bigcup_{j=1}^\mathfrak{N}\mathfrak{S}_j)\cap D)^3$
(see formula (9) in Addendum in \cite{Wlok87}) such that
\begin{align*}
&\nu\times \BW_j =0 & \mbox{on}\quad \p (\mathfrak{S}_j\cap D)\cap S^c, \\
&\nu\times(\nu \times (\nabla\times \BW_j)) = \nu\times (\nu\times \psi) & \mbox{on}\quad \p (\mathfrak{S}_j\cap D)\cap S^c, \\
&\|\BW_j\|_{H^2((\bigcup_{j=1}^\mathfrak{N}\mathfrak{S}_j)\cap D)^3}\leq C \|\psi\|_{H^{1/2}(S^c)^3}, &
\end{align*}
where $C$ is a constant depending only on $D$.
Finally, by setting $\BW:=\sum_{j=1}^\mathfrak{N} \BW_j$ in $(\bigcup_{j=1}^\mathfrak{N}\mathfrak{S}_j)\cap D$ and
$\BW=0$ in $D\setminus ((\bigcup_{j=1}^\mathfrak{N}\mathfrak{S}_j)\cap D)$, one can complete the proof.
\end{proof}

We proceed with the proof of Theorem~\ref{th:main1}.

\begin{lem}\label{le:estiphi41}
Let $\tilde\Phi$ be defined in \eqnref{eq:ppp}, where $(\bE_\delta, \bH_\delta)$ are the solutions
to \eqnref{eq:sys1} with the corresponding $\varepsilon_\delta$, $\mu_\delta$ and $\Gs_\delta$ given by \eqnref{eq:loss1}. Then there hold
\beq\label{eq:estphi1}
\| \tilde\Phi\|_{\mathrm{TH}^{-1/2}(S^f)}\leq
C\delta^{-1+\beta}\|\bE_\delta\|_{L^2(D_\delta^f\setminus D_{\delta/2})^3},
\eeq
and
\beq\label{eq:estphi2}
\|\tilde\Phi\|_{\mathrm{TH}^{-1/2}(S^c)}
\leq C\delta^{-3/2+\beta'}\|\bE_\delta\|_{L^2(D_\delta^c\setminus D_{\delta/2})^3}
, \quad c\in\{a, b\},
\eeq
where $\beta=\min\{1, -1+r+s, -1+t+s\}$ and $\beta'=\min\{1, -2+r+s, -2+t+s\}$.
\end{lem}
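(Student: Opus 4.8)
The plan is to estimate $\|\tilde\Phi\|_{\mathrm{TH}^{-1/2}(S^c)}$, for $c\in\{f,a,b\}$, by testing the defining duality pairing \eqref{eq:defnorm2} against an arbitrary $\varphi\in H_0^{1/2}(S^c)^3$, rewriting the resulting surface integral in terms of the interior fields, and then pulling everything back to the $\delta$-scale via Lemmas~\ref{le:changerela1}--\ref{le:changerela2}. First I would take $\psi=\varphi\in H_0^{1/2}(S^c)^3$ and invoke the localized Sobolev extension result of Lemma~\ref{le:extenlocal1} to produce a test field $\BW\in H^2(D)^3$ with $\nu\times\BW=0$ and $\nu\times(\nu\times(\nabla\times\BW))=\nu\times(\nu\times\psi)$ on $S^c$, with $\|\BW\|_{H^2(D)^3}\le C\|\psi\|_{H^{1/2}(S^c)^3}$, and — crucially — vanishing on a fixed interior subdomain $D'$ containing $\overline{D}_{1/2}$ (so that after blowup its physical-space counterpart is supported in $D_\delta\setminus\overline D_{\delta/2}$). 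Then the pairing $\int_{S^c}\tilde\Phi\cdot\overline\varphi\,d\Gs_{\tilde x}=\int_{S^c}(\nu_{\tilde x}\times\bE_\delta|_+)\cdot\overline\varphi\,d\Gs_{\tilde x}$ can, after inserting the tangential identities and integrating by parts (Green's formula for $\mathrm{curl}$, i.e. the identity \eqref{eq:pp411} in its integrated form), be written as a volume integral over the shell region of the type $\int (\nabla\times\bE_\delta)\cdot\overline{\BW}-\bE_\delta\cdot\overline{\nabla\times\BW}$, which by the Maxwell equations \eqref{eq:sys1} in $D_\delta\setminus\overline D_{\delta/2}$ becomes $\int i\omega\mu_\delta\bH_\delta\cdot\overline{\BW}-\bE_\delta\cdot\overline{\nabla\times\BW}$ (and a further curl of $\bH_\delta$ reintroduces $\varepsilon_\delta+i\Gs_\delta/\omega$ acting on $\bE_\delta$).

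The second main step is the change of variables to the blown-up domain $D$. I would push the above volume integral forward under $A$ of \eqref{eq:A}: the Jacobian information in Lemma~\ref{le:2} gives $|\BB|=\delta^{-2}$ on $D_\delta^f$ and eigenvalues $1$ or $1/\delta$ by \eqref{eq:eigenB1}, and Lemma~\ref{le:changerela1} converts $\nabla\times$ of a vector field and the normal-trace pairings into their hatted counterparts on $D$, while the volume element $d\Bx$ and the material tensors \eqref{eq:loss1} produce explicit powers of $\delta$. Tracking these powers carefully through each of the three terms $\varepsilon_l=\delta^r|\BB|\BB^{-1}$, $\mu_l=\delta^s|\BB|\BB^{-1}$, $\Gs_l=\delta^t|\BB|\BB^{-1}$ — combined with the $H^2(D)$-bound on $\hat{\BW}$ (which costs at most a bounded factor since $\|\BB\|,\|\BB^{-1}\|$ are controlled after the normalization to the unit-scale domain $D$) and the $L^2$-norm of $\bE_\delta$ on the shell after change of variables (again a controlled power of $\delta$ from the volume element) — is what yields the exponents $\delta^{-1+\beta}$ on $S^f$ and $\delta^{-3/2+\beta'}$ on $S^a\cup S^b$, with $\beta=\min\{1,-1+r+s,-1+t+s\}$ and $\beta'=\min\{1,-2+r+s,-2+t+s\}$. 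The three entries in each min correspond precisely to the three contributions: the identity term (from the $\omega^2$-part of $\nabla\times\nabla\times$, i.e. the "$1$"), the $\mu_l$-term (giving $r+s$ or equivalently $s$ in combination with the $\varepsilon$-coupling, hence $-1+r+s$), and the $\Gs_l$-term (giving $-1+t+s$); for the $S^a\cup S^b$ pieces the end-surface scaling $d\Gs_y=\delta^2 d\Gs_{\tilde y}$ from \eqref{eq:areapro411} shifts each exponent down by one, producing $\beta'$.

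The part I expect to be the main obstacle is the careful bookkeeping of the $\delta$-powers in the change-of-variables step, in particular making sure that the anisotropic, $\delta$-degenerate Jacobian $\BB$ (whose smallest eigenvalue is $1$ but whose largest is $\delta^{-1}$) does not spoil the bound when it acts on $\BW$ through $\hat{\BW}=(\ti\BB^T)^{-1}\BW$: one must exploit that $\BW$, being a test field we get to choose, has the right polarization — specifically its tangential structure on $S^c$ and the fact that after conjugation by $\BB^{-1}$ the worst direction is damped rather than amplified — so that $\|\hat{\BW}\|_{H^2}$ stays comparable to $\|\BW\|_{H^2}$ up to constants independent of $\delta$. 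A secondary delicate point is the handling of the boundary terms on $\p D_\delta$ versus $\p B_R$ and confirming that, because $\BW$ vanishes in $D'$ (hence near $S^f$ after blowup for the facade estimate, resp. near the relevant end in the end-surface estimate), only the shell $D_\delta\setminus\overline D_{\delta/2}$ contributes and no uncontrolled term on $\p D_\delta$ survives. Once these are in place, taking the supremum over $\|\varphi\|_{H^{1/2}_0(S^c)^3}\le 1$ in \eqref{eq:defnorm2} gives exactly \eqref{eq:estphi1} and \eqref{eq:estphi2}.
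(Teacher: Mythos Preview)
Your plan is essentially the paper's own approach: test against $\psi\in H_0^{1/2}(S^c)^3$, invoke Lemma~\ref{le:extenlocal1} to produce $\BW\in H^2(D)^3$ vanishing on $D'=D_{1/2}\cup(D\setminus D^c)$, convert the surface pairing into volume integrals over the shell via the curl integration-by-parts identity and Lemma~\ref{le:changerela2}, and then read off the $\delta$-powers from the material tensors \eqref{eq:loss1} together with the $L^2$ change-of-variables factor $\|\tilde\bE_\delta\|_{L^2(D^f\setminus \overline D_{1/2})}=\delta^{-1}\|\bE_\delta\|_{L^2(D_\delta^f\setminus \overline D_{\delta/2})}$ (resp.\ $\delta^{-3/2}$ on the end caps).

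One clarification on the obstacle you anticipate. The worry that the anisotropic $\BB$ might spoil the bound through $\hat\BW=(\tilde\BB^T)^{-1}\BW$ does not have to be resolved by a polarization argument for $\BW$. The point is rather algebraic: the lossy-layer tensors in \eqref{eq:loss1} are deliberately of the form $\delta^{\,\cdot}\,|\BB|\BB^{-1}$, so that after the change of variables of Lemma~\ref{le:changerela1} the transformed Maxwell system in $D\setminus\overline{D}_{1/2}$ has \emph{scalar} coefficients. Concretely, from $|\tilde\BB|\tilde\BB^{-1}\nabla_{\tilde\Bx}\times\hat\bE_\delta=i\omega\tilde\mu_l\tilde\bH_\delta$ and $\tilde\mu_l=\delta^s|\tilde\BB|\tilde\BB^{-1}$ one gets $\nabla_{\tilde\Bx}\times\hat\bE_\delta=i\omega\delta^{s}\tilde\bH_\delta$, and iterating with the second equation yields the paper's key identity \eqref{eq:gtform1}, in which all $\BB$'s have cancelled. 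The final volume integrands are then $\BW\cdot\tilde\bE_\delta$ and $\tilde\bE_\delta\cdot(\nabla\times\nabla\times\BW)$, controlled directly by $\|\BW\|_{H^2(D)^3}$ with no hatted test field appearing. The ``$1$'' in the minimum comes from the latter term (carrying a prefactor $\delta$), while the $-1+r+s$ and $-1+t+s$ (resp.\ $-2+r+s$, $-2+t+s$ on $S^a\cup S^b$) come from the former via \eqref{eq:gtform1} multiplied by the $\delta^{-1}$ (resp.\ $\delta^{-2}$) from \eqref{eq:imp411}--\eqref{eq:imp412}.
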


\begin{proof}
It suffices to show that the same estimates in \eqnref{eq:estphi1} and
\eqnref{eq:estphi2} hold for $\|\tilde\Phi(\tdx)\|_{H^{-1/2}(S^c)^3}$, $c\in\{f, a, b\}$.
For any test function
$\psi\in H^{1/2}_0(S^c)^3$, $c\in \{f, a, b\}$, we introduce an auxiliary function $\BW\in H^2(D)^3$ which satisfies the conditions in Lemma \ref{le:extenlocal1}
($D'$ is chosen to be $D'=D_{1/2}\cup (D\setminus D^c)$).
First, it follows from the properties of $\BW$ that
\begin{align*}
&\int_{S^f} \tilde\Phi(\tdx)\cdot\psi(\tdx)\, d\Gs_{\tilde{x}}=
 - \int_{S^f} (\nu\times\ti\bE_\delta) \cdot (\nu\times(\nu\times\psi))\, d\Gs_{\tilde{x}} \\
=& -\int_{S^f} \ti\bE_\delta \cdot (\nu\times(\nabla\times \BW))d\Gs_{\tilde{x}}- \int_{\p D^f\setminus S^f} \ti\bE_\delta \cdot (\nu\times(\nabla\times \BW))\, d\Gs_{\tilde{x}} \\
=& -\int_{\p D^f} \ti\bE_\delta \cdot (\nu\times(\nabla\times \BW))\, d\Gs_{\tilde{x}}.
\end{align*}
From its construction, it is readily seen that $\nu\times \BW|_{\p D^c}=0$. Define
\beq\label{eq:curlw1}
\widehat{\mbox{Curl}\, W}:=(\ti \BB^T)^{-1}(\nabla\times \BW).
\eeq
By using \eqnref{eq:ppp41}, \eqnref{eq:imp411}, \eqnref{eq:jacobdet1} and integration by parts, one also has
\begin{align*}
&\int_{S^f} \tilde\Phi(\tdx)\cdot\psi(\tdx)\, d\Gs_{\tilde{x}}=  -\delta^{-1}\int_{\p D^f} \hat\bE_\delta \cdot \Big(\nu\times \widehat{\mbox{Curl}\, W}\Big)\, d\Gs_{\tilde{x}} \\
= &-\delta^{-1}\left(\int_{\p D^f} \hat\bE_\delta \cdot \Big(\nu\times\widehat{\mbox{Curl}\, W}\Big)\, d\Gs_{\tilde{x}}
-\int_{\p D^f} \BW \cdot \Big(\nu\times\widehat{\mbox{Curl}\, W}\Big)d\Gs_{\tilde{x}}\right)\\
= & \delta^{-1}\left(\int_{D^f} \BW \cdot \Big(\nabla\times\widehat{\mbox{Curl}\, W}\Big)d\tilde{\Bx}
-\int_{D^f}\hat\bE_\delta \cdot \Big(\nabla\times\widehat{\mbox{Curl}\, W}\big)\Big)d\ti{\Bx}\right)\\
=& \delta^{-1}\int_{D^f\setminus \overline{D}_{1/2}}\BW \cdot \Big(\nabla\times\widehat{\mbox{Curl}\, W}\Big)d\tilde{\Bx}
-\delta\int_{D^f\setminus \overline{D}_{1/2}} \ti\bE_\delta \cdot \Big(\nabla\times(\nabla\times \BW)\Big)d\ti{\Bx}.
\end{align*}
For $\Bx\in D_{\delta}\setminus D_{\delta/2}$ there holds
\begin{align*}
 \nabla_{\mathbf{x}} \times \bE_\delta = i \omega \mu_l \bH_\delta , \quad
  \nabla_{\mathbf{x}}  \times \bH_\delta= - i \omega \Big(\varepsilon_l+i\frac{\Gs_l}{\omega}\Big) \bE_\delta.
\end{align*}
By using change of variables and \eqnref{eq:ppp41} in Lemma \ref{le:changerela1}, one can derive that
\begin{align*}
 |\ti\BB|\ti\BB^{-1} \nabla_{\ti{\mathbf{x}}} \times \hat\bE_\delta = i \omega \ti\mu_l \ti\bH_\delta , \quad
 |\ti\BB|\ti\BB^{-1} \nabla_{\ti{\mathbf{x}}}  \times \hat\bH_\delta= - i \omega \Big(\ti\varepsilon_l+i\frac{\ti\Gs_l}{\omega}\Big) \ti\bE_\delta,
\end{align*}
which holds in $D\setminus \overline{D}_{1/2}$. By combining \eqnref{eq:loss1}, \eqnref{eq:def3} and \eqnref{eq:curlw1}, one can further show that in $D\setminus \overline{D}_{1/2}$,
\beq\label{eq:gtform1}
\nabla\times\widehat{\mbox{Curl}\, W}=\omega^2(\delta^{r+s}+i\delta^{t+s}\frac{1}{\omega})\ti\bE_\delta.
\eeq
Thus by using \eqnref{eq:gtform1} one has
\begin{align*}
&\int_{S^f} \tilde\Phi(\tdx)\cdot\psi(\tdx)\, d\Gs_{\tilde{x}}\\
= &
\omega^2(\delta^{-1+r+s}+i\delta^{-1+t+s}\frac{1}{\omega})\int_{D^f\setminus \overline{D}_{1/2}} \BW \cdot \ti\bE_\delta\, d\tilde{\Bx}
-\delta\int_{D^f\setminus \overline{D}_{1/2}} \ti\bE_\delta \cdot \Big(\nabla\times(\nabla\times \BW)\Big)\, d\ti{\Bx},
\end{align*}
which in combination with the fact that
\begin{align*}
\|\ti\bE_\delta\|_{L^2(D^f\setminus \overline{D}_{1/2})^3}=\delta^{-1}\|\bE_\delta\|_{L^2(D_\delta^f\setminus \overline{D}_{\delta/2})^3},
\end{align*}
immediately yields that
\begin{align*}
\Big|\int_{S^c} \tilde\Phi(\tdx)\cdot\psi(\tdx)\, d\Gs_{\tilde{x}}\Big|\leq & C\delta^{\beta}\|\ti\bE_\delta\|_{L^2(D^f\setminus D_{1/2})^3}
\|\BW\|_{H^2(D^f\setminus D_{1/2})^3}\\
\leq & C\delta^{-1+\beta}\|\bE_\delta\|_{L^2(D_\delta^f\setminus D_{\delta/2})^3}
\|\BW\|_{H^2(D^f\setminus D_{1/2})^3},
\end{align*}
where $\beta=\min\{1, -1+r+s, -1+t+s\}$.
By using exactly the same strategy as above with $S^f$ and \eqnref{eq:imp412}, one can show that for $c\in\{a, b\}$
\begin{align*}
&\int_{S^c} \tilde\Phi(\tdx)\cdot\psi(\tdx)\, d\Gs_{\tilde{x}}\\
=&
\delta^{-2}\int_{D^c\setminus \overline{D}_{1/2}}\BW \cdot \Big(\nabla\times\widehat{\mbox{Curl}\, W}\Big)\, d\tilde{\Bx}
-\delta\int_{D^c\setminus \overline{D}_{1/2}} \ti\bE_\delta \cdot \Big(\nabla\times(\nabla\times \BW)\Big)d\ti{\Bx},
\end{align*}
which together with the fact
\begin{align*}
\|\ti\bE_\delta\|_{L^2(D^c\setminus \overline{D}_{1/2})^3}=\delta^{-3/2}\|\bE_\delta\|_{L^2(D_\delta^c\setminus \overline{D}_{\delta/2})^3}, \quad c\in\{a, b\},
\end{align*}
readily yields that
\begin{align*}
\Big|\int_{S^c} \tilde\Phi(\tdx)\cdot\psi(\tdx)\, d\Gs_{\tilde{x}}\Big|
\leq & C\delta^{-3/2+\beta'}\|\bE_\delta\|_{L^2(D_\delta^c\setminus \overline{D}_{\delta/2})^3}
\|\BW\|_{H^2(D^c\setminus \overline{D}_{1/2})^3},
\end{align*}
where $\beta'=\min\{1, -2+r+s, -2+t+s\}$. The proof can be completed by noting the definitions \eqnref{eq:defnorm1}, \eqnref{eq:defnorm2} and the fact that
$$
\|\BW\|_{H^2(D^c\setminus \overline{D}_{1/2})^3}\leq C \|\psi\|_{H^{1/2}(\p D^c)^3}, \quad c\in\{f, a, b\}.
$$
\end{proof}

\begin{proof}[Proof of Theorem \ref{th:main1}]
It is straightforward to see from \eqnref{eq:estfnl1} that we only need to derive the estimates of
$\|\tilde{\Phi}\|_{\mathrm{TH}^{-1/2}(S^f)}$ and $\|\tilde{\Phi}\|_{\mathrm{TH}^{-1/2}(\p D)}$.
To that end, we have by using \eqnref{eq:vis1}, \eqnref{eq:def1},
\eqnref{eq:tmpexp2} and \eqnref{eq:estphi11} that
\begin{equation}\label{eq:uunn1}
\begin{split}
\|\nu\times \bE_\delta^+\|_{\mathrm{TH}_{\mathrm{div}}^{-1/2}(\p B_R)}&\leq C \|\nu\times
\nabla\times \Acal_{D_\delta}[\ba]\|_{\mathrm{TH}_{\mathrm{div}}^{-1/2}(\p B_R)}\\
&\leq C\delta \|\ti\ba\|_{\mathrm{TH}^{-1/2}(S^f)}+C\delta^2\|\ti\ba\|_{\mathrm{TH}^{-1/2}(\p D)} \\
& \leq C\delta \|\tilde{\Phi}\|_{\mathrm{TH}^{-1/2}(S^f)}+C\delta^2(\|\tilde\Phi\|_{\mathrm{TH}^{-1/2}(\p D)}+1).
\end{split}
\end{equation}
Next, by combining \eqref{eq:uunn1}, \eqnref{eq:estphi11}, \eqnref{estL2}, \eqnref{estL3}, \eqnref{eq:estphi1} and \eqnref{eq:estphi2}, together with the use of the facts that both $\GL$ and $\GL^-$ are bounded, and $\beta'\leq\beta$ (see Lemma \ref{le:estiphi41}), one can further deduce that
\begin{align*}
\|\tilde\Phi\|_{\mathrm{TH}^{-1/2}(\p D)} \leq&
C\delta^{-1+\beta}\|\bE_\delta\|_{L^2(D_\delta^f\setminus D_{\delta/2})^3}
+ C\delta^{-3/2+\beta'}\|\bE_\delta\|_{L^2(D_\delta^{a\cup b}\setminus D_{\delta/2})^3}\\
\leq& C \delta^{-1+\beta+1/2-t/2}\Big(\|\nu\times \bE_\delta^+\|_{\mathrm{TH}_{\mathrm{div}}^{-1/2}(\p B_R)}^2
 + \Ocal(\delta\|\ti\ba\|_{\mathrm{TH}^{-1/2}(\p D)})\Big)^{1/2} \\
& + C\delta^{-3/2+\beta'+ 1-t/2}\Big(\|\nu\times \bE_\delta^+\|_{\mathrm{TH}_{\mathrm{div}}^{-1/2}(\p B_R)}^2
 + \Ocal(\delta\|\ti\ba\|_{\mathrm{TH}^{-1/2}(\p D)})\Big)^{1/2} \\
\leq& C\delta^{\beta'-t/2-1/2}(\delta \|\tilde{\Phi}\|_{\mathrm{TH}^{-1/2}(S^f)}+\delta^2\|\tilde\Phi\|_{\mathrm{TH}^{-1/2}(\p D)}+\delta^2)\\
&+C\delta^{\beta'-t/2}(\|\tilde\Phi\|_{\mathrm{TH}^{-1/2}(\p D)}^{1/2}+1),
\end{align*}
which in turn yields (noting that $\beta'-t/2\geq 0$)
\begin{align*}
\|\tilde\Phi\|_{\mathrm{TH}^{-1/2}(\p D)}& \leq C\delta^{\beta'-t/2}(\|\tilde\Phi\|_{\mathrm{TH}^{-1/2}(\p D)}^{1/2}+1).
\end{align*}
Thus one has
\beq\label{eq:impfnlest1}
\|\tilde\Phi\|_{\mathrm{TH}^{-1/2}(\p D)} \leq C\delta^{\beta'-t/2}.
\eeq
It is remarked that in \eqref{eq:impfnlest1}, the generic constant obviously depends on $R$, but as was remarked earlier that such a dependence can be absorbed into the dependence on $D$. Next, by using \eqnref{eq:estphi1} and \eqnref{eq:impfnlest1}, there holds
\begin{align*}
\|\tilde\Phi\|_{\mathrm{TH}^{-1/2}(S^f)} & \leq
C\delta^{-1+\beta}\|\bE_\delta\|_{L^2(D_\delta^f\setminus D_{\delta/2})^3}\\
&\leq C \delta^{-1+\beta+1/2-t/2}\Big(\|\nu\times \bE_\delta^+\|_{\mathrm{TH}_{\mathrm{div}}^{-1/2}(\p B_R)}^2
 + \Ocal(\delta\|\ti\ba\|_{\mathrm{TH}^{-1/2}(\p D)})\Big)^{1/2} \\
&\leq C\delta^{-1/2+\beta-t/2}(\delta \|\tilde{\Phi}\|_{\mathrm{TH}^{-1/2}(S^f)}+\delta^2)+C\delta^{\beta-t/2},
\end{align*}
and thus
\beq\label{eq:impfnlest2}
\|\tilde\Phi\|_{\mathrm{TH}^{-1/2}(S^f)} \leq C\delta^{\beta-t/2}.
\eeq
By plugging \eqnref{eq:impfnlest1} and \eqnref{eq:impfnlest2} into \eqnref{eq:estfnl1}, one finally has \eqnref{eq:ncest1}.

The proof is complete.
\end{proof}

\section{Regularized partial-cloaking of EM waves}\label{sect:5}

In this section, we consider the regularized partial-cloaking of EM waves by taking the generating set $\Gamma_0$ to be a flat subset on a plane in $\mathbb{R}^3$. In order to ease our exposition, we stick our subsequent study to a specific example considered in \cite{DLU15,LiLiuRonUhl} for the partial-cloaking of acoustic waves, where $\Gamma_0$ is taken to be a square on $\mathbb{P}_2:=\{\mathbf{x}=(x_1,x_2,x_3)\in\mathbb{R}^3: x_3=0\}$. The virtual domain is constructed as follows; see Fig.~\ref{fig2} for a schematic illustration.
Let $\mathbf{n}\in\mathbb{S}^2$ be the unit normal vector to $\GG_0$. Let
\begin{equation}\label{eq:rrr1}
D_q^0:=\GG_0(\Bx)\times [\Bx-\tau\cdot \mathbf{n}, \Bx+\tau\cdot \mathbf{n}],\quad \Bx\in\overline{\GG}_0,\quad 0\leq \tau\leq q,
\end{equation}
where we identify $\GG_0$ with its parametric representation $\GG_0(\Bx)$. We denote by $D_q^{1}$ the union of the four side
half-cylinders and $D_q^{2}$ the union of the four corner quarter-balls in Fig.~\ref{fig2}. Let
\begin{equation}\label{eq:vtq}
D_q:=D_q^0\cup D_q^1\cup D_q^2.
\end{equation}
Set $S_q^1$ and $S_q^2$ to denote
$$
S_q^1:=\p D_q \cap \p D_q^1, \quad S_q^2:=\p D_q \cap \p D_q^2,
$$
and
$$D_q^f:=D_q^1 \cup  D_q^2.$$
The upper and lower-surfaces of $D_q$ are respectively denoted by
$$
\GG_q^1:=\{\Bx+ q\cdot \mathbf{n};\, \Bx\in \GG_0\} \quad\mbox{and}\quad \GG_q^2:=\{\Bx-q\cdot \mathbf{n};\, \Bx\in \GG_0\}.
$$
Define $S_q^0:=\GG_q^1\cup \GG_q^2$.
We then have $\partial D_q=S_q^0\cup S_q^1\cup S_q^2$. Similar to our notations in Section~\ref{sect:4}, we let $\delta\in\mathbb{R}_+$ denote the asymptotically small regularization parameter and $D_\delta$ be the virtual domain. Clearly, we have
\beq\label{eq:struc51}
\p D_\delta= S_\delta^0 \cup S_\delta^1 \cup S_\delta^2.
\eeq
In what follows, if $q\equiv 1$, we drop the dependence on $q$ of $D_q, S_q^0$,
$S_q^1$ and $S_q^2$, and simply write them as $D, S^0$, $S^1$, and $S^2$. In concluding the description of the virtual domain for the partial-cloaking construction, we would like to emphasize that our subsequent study can be easily extended to a much more general case where the generating set $\Gamma_0$ can be a bounded subset on a plane with a convex and piecewise smooth boundary (in the topology induced from the plane).
\begin{figure}[h]
\begin{center}
  \includegraphics[width=3.5in,height=2.0in]{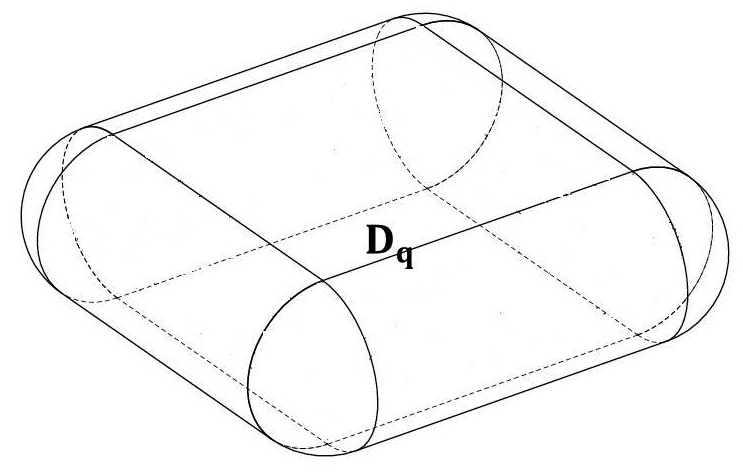}\\
  \includegraphics[width=3.8in,height=2.3in]{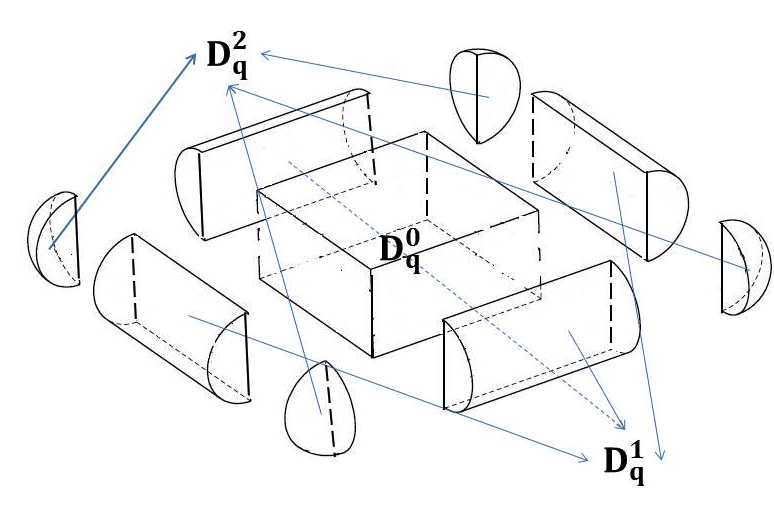}
  \end{center}
  \caption{Schematic illustration of the domain $D_q$ for the regularized partial-cloak. \label{fig2}}
\end{figure}

We introduce a blowup transformation $A$ which maps $D_\delta$ to $D$ exactly as that in \cite{LiLiuRonUhl}.
We stress that in $D_\delta^0$ the blow-up transformation takes the following form
$$
A(\By)=\tdy:=\left(\frac{\mathbf{e}_3\mathbf{e}_3^T}{\delta}+
\mathbf{e}_1\mathbf{e}_1^T+\mathbf{e}_2\mathbf{e}_2^T\right)\By, \quad \By\in D_\delta^0.
$$
where $\By\in D_\delta$ and $\tdy\in D$, and the three Euclidean unit vectors are as follows
$$
\mathbf{e}_1=(1, 0, 0)^T,\quad \mathbf{e}_2=(0, 1, 0)^T, \quad \mathbf{e}_3=(0, 0, 1)^T.
$$
We are now in the position of introducing the lossy layer for our partial-cloaking device
\beq\label{eq:loss2}
\begin{array}{ll}
\varepsilon_\delta(\Bx)=\varepsilon_l(\Bx):=\delta^{r}|\BB|\BB^{-1}, &
\mu_\delta(\Bx)=\mu_l(\Bx):=\delta^{s}|\BB|\BB^{-1}, \\
\Gs_\delta(\Bx)=\Gs_l(\Bx):=\delta^{t}|\BB|\BB^{-1}, & \mbox{for} \quad \Bx\in D_\delta\setminus\overline{D}_{\delta/2},
\end{array}
\eeq
where $\BB(\Bx):=\nabla_{\mathbf{x}} A(\Bx)$ is the Jacobian matrix of the blowup transformation $A$. It is obvious that
\beq\label{eq:Bprop51}
\BB=\frac{\mathbf{e}_3\mathbf{e}_3^T}{\delta}+
\mathbf{e}_1\mathbf{e}_1^T+\mathbf{e}_2\mathbf{e}_2^T \quad \mbox{in} \quad D_\delta^0,
\eeq
is a constant matrix and
\beq\label{eq:Bprop52}
\BB \mathbf{n}= \frac{\mathbf{n}}{\delta}, \quad \BB \mathbf{e}_1= \mathbf{e}_1, \quad \BB \mathbf{e}_2=\mathbf{e}_2 \quad \mbox{in}
\quad D_\delta^0.
\eeq
Thus one has
$$
|\BB|=1/\delta, \quad \mbox{in} \quad D_\delta^0.
$$
Furthermore, for $\Bx\in D_\delta$, one can show by direct calculations that \eqref{eq:le22} and \eqnref{eq:eigenB1} are also valid for the $\BB$ defined above.
\begin{lem}\label{le:changerela51}
Let $D^j$, $j=0, 1, 2$ be defined at the beginning of this section.
Let $\bV$, $\BW$ and $\hat{\bV}$, $\hat{\BW}$ be defined similarly as those in Lemma \ref{le:changerela1}. Then one has
\begin{align*}
\int_{\p D^j}(\nu_{\tdx}\times \ti\bV(\tdx)) \cdot \BW(\tdx)\, d\Gs_{\ti x} =
& \delta^{-j} \int_{\p D^j}(\nu_{\tdx}\times\hat\bV(\tdx)) \cdot \hat\BW(\tdx)\, d\Gs_{\ti x} , \quad j=0, 1, 2. \\
\end{align*}
\end{lem}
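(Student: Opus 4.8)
The plan is to follow the route of the (one-line) proof of Lemma~\ref{le:changerela2}: we combine the tangential change-of-variables identity \eqnref{eq:pp411} of Lemma~\ref{le:changerela1} with the surface-measure scaling law appropriate to the present geometry and with the fact that $A$ carries the unit normal of $\p D_\delta$ to the unit normal of $\p D$. Throughout, $\p D^j$ is read --- exactly as $\p D^f$, $\p D^c$ are in Lemma~\ref{le:changerela2} --- as the outer face $S^j=\p D\cap\p D^j$, $j=0,1,2$, and $S_\delta^j\subset\p D_\delta$ denotes its pre-image under $A$.

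The only genuinely new ingredient, the partial-cloak counterpart of \eqnref{eq:areapro411}, is the scaling
\[
d\Gs_y=\delta^{j}\, d\Gs_{\tilde y}\quad\text{on}\quad S_\delta^{j},\qquad j=0,1,2 .
\]
For $j=0$ this is immediate from \eqnref{eq:Bprop51}--\eqnref{eq:Bprop52}: the face $S_\delta^0$ is orthogonal to the single stretched direction $\Be_3$ and $A$ acts as the identity in its two tangential variables, so its area element is unchanged, while $\BB\mathbf{n}=\delta^{-1}\mathbf{n}$ gives the normal statement. For $j=1$ (the side half-cylinders) and $j=2$ (the corner quarter-balls) one unpacks the concatenated construction of $A$ from \cite{LiLiuRonUhl}: there $\BB$ fixes the directions tangential to the relevant edge and rescales the remaining directions by $1/\delta$, so that exactly $j$ of the two directions tangential to $S_\delta^j$ get multiplied by $1/\delta$, whence the factor $\delta^{j}$; the normal direction is also multiplied by $1/\delta$, in accordance with \eqnref{eq:le22} (valid here by direct calculation, as noted before the statement), and $A$ restores unit normals, so $\nu_{\mathbf{y}}=\nu_{\tilde{\mathbf{y}}}$ on $\p D_\delta$ together with orientation preservation, exactly as in Section~\ref{sect:4}.

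Granting this, fix $j\in\{0,1,2\}$ and apply \eqnref{eq:pp411} of Lemma~\ref{le:changerela1} with ``$\BW$'' there replaced by $\BW\circ A\in H(\mbox{\emph{curl}};D_\delta)$ and ``$\bV$'' by $\bV$; then the transformed fields on the right-hand side of \eqnref{eq:pp411} are precisely $\hat\bV$ and $\hat\BW$. Since \eqnref{eq:pp411} is an identity of integrands modulo the surface Jacobian, it localizes to the face $S_\delta^j\leftrightarrow S^j$:
\[
\int_{S_\delta^{j}}(\nu_{\mathbf{x}}\times\bV)\cdot(\BW\circ A)\, d\Gs_x=\int_{S^{j}}(\nu_{\tdx}\times\hat\bV)\cdot\hat\BW\, d\Gs_{\tilde x}.
\]
Changing variables $\tdx=A(\Bx)$ in the left-hand integral and invoking $d\Gs_x=\delta^{j}d\Gs_{\tilde x}$, $\nu_{\mathbf{x}}=\nu_{\tdx}$, $\bV(\Bx)=\ti\bV(\tdx)$ and $(\BW\circ A)(\Bx)=\BW(\tdx)$ turns the left-hand side into $\delta^{j}\int_{S^{j}}(\nu_{\tdx}\times\ti\bV)\cdot\BW\, d\Gs_{\tilde x}$. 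Comparing the two expressions and dividing by $\delta^{j}$ gives the asserted identity on $S^j$; since $j$ was arbitrary, the lemma follows.

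I expect the first step to be the only real obstacle --- verifying $d\Gs_y=\delta^{j}d\Gs_{\tilde y}$ and $\nu_{\mathbf{y}}=\nu_{\tilde{\mathbf{y}}}$ on the side half-cylinders and corner quarter-balls, where $A$ is the concatenated transformation of \cite{LiLiuRonUhl} rather than the diagonal map \eqnref{eq:Bprop51}; the rest is bookkeeping. Alternatively, one can dispense with \eqnref{eq:pp411} and argue pointwise: on $D_\delta^j$ the Jacobian has the form $\BB=\delta^{-1}P_j+(I-P_j)$ with $P_j$ an orthogonal projection of rank $j+1$ (onto the stretched subspace), and since the outward normal to $S_\delta^j$ lies in the range of $P_j$ by \eqnref{eq:le22}, a short linear-algebra computation yields $(\nu_{\tdx}\times\ti\bV)\cdot\BW=\delta^{-j}(\nu_{\tdx}\times\hat\bV)\cdot\hat\BW$ at almost every point of $S^j$, which one then integrates over $S^j$.
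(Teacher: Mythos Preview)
Your proposal is correct and follows precisely the route the paper intends: the paper states this lemma without proof, the argument being the direct analogue of the one-line proof of Lemma~\ref{le:changerela2} --- combine \eqnref{eq:pp411} with the appropriate surface-measure scaling and change variables. You have supplied exactly that argument, together with the partial-cloak replacement $d\Gs_y=\delta^{j}\,d\Gs_{\tilde y}$ on $S_\delta^{j}$ for \eqnref{eq:areapro411}, which is the one point the paper leaves to the reader.
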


We are now in a position to present the main theorem of this section in quantifying our partial-cloaking construction.

\begin{thm}\label{th:main2}
Let $D_\delta$ be defined in \eqref{eq:vtq} with its boundary given by \eqnref{eq:struc51}.
Let $(\bE_\delta,\bH_\delta)$ be the pair of solutions to \eqnref{eq:sys1} with
$\{\Omega; \varepsilon_\delta, \mu_\delta, \Gs_\delta\}\subset \{\RR^3; \varepsilon_\delta, \mu_\delta, \Gs_\delta\} $
defined in \eqnref{eq:struc} and $\{D_\delta\backslash\overline{D}_{\delta/2}; \varepsilon_\delta, \mu_\delta, \Gs_\delta\}$ given in \eqnref{eq:loss2}. Let $\mathbf{A}_\infty^\delta(\hat{\Bx}; \mathbf{p}, \mathbf{d})$ be the scattering amplitude of $\bE_\delta$. Define
$$
\beta_j=\min\{1, -j+r+s, -j+t+s\},\ j=0, 1, 2,
$$
and for $\epsilon\in\mathbb{R}_+$ with $\epsilon\ll 1$,
\beq\label{eq:partialmaincond51}
\Sigma_p:=\{\mathbf{p}\in\mathbb{S}^2;\ \|\mathbf{p}\times \mathbf{n}\| \leq \epsilon\},\qquad \Sigma_d:=\{\mathbf{d}\in\mathbb{S}^2;\ |\mathbf{d}\cdot\mathbf{n}| \leq \epsilon\}.
\eeq
If $r$, $s$ and $t$ are chosen such that $\beta_2-t/2> 0$, then there exists $\delta_0\in\mathbb{R}_+$ such that when $\delta<\delta_0$,
\beq\label{eq:fnles51}
\|\mathbf{A}^{\delta}_\infty(\hat{\Bx};\mathbf{p},\mathbf{d})\|\leq C (\epsilon + \delta^{2(\beta_2-t/2)}+ \delta),\quad\mathbf{p}\in\Sigma_p,\ \mathbf{d}\in\Sigma_d,\ \hat{\mathbf{x}}\in\mathbb{S}^2,
\eeq
where $C$ is a positive constant depending on $\omega$ and $D$,
but independent of $\varepsilon_a$, $\mu_a$, $\Gs_a$ and $\hat{\Bx}$, $\mathbf{p}$, $\mathbf{d}$.
\end{thm}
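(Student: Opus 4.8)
The plan is to transpose the three-step scheme of the proof of Theorem~\ref{th:main1} to the present slab geometry, the boundary of the virtual domain now being split by \eqref{eq:struc51} into the three pieces $S_\delta^0$, $S_\delta^1$, $S_\delta^2$ carrying, respectively, the blowup-scaling factors $\delta^{0}$, $\delta^{-1}$, $\delta^{-2}$ supplied by Lemma~\ref{le:changerela51}. In this dictionary the side half-cylinders $S_\delta^1$ and the corner quarter-balls $S_\delta^2$ are the analogues of the facade $S_\delta^f$ and of the end caps $S_\delta^a\cup S_\delta^b$ of the full cloak --- their collapsing directions having codimension $2$ and $3$ --- so the lemmas of Section~\ref{sect:4} go through on these two pieces essentially verbatim: the asymptotic expansions of $\Mcal_{D_\delta}^\omega$ and $\Lcal_{D_\delta}^\omega$ in the spirit of Lemma~\ref{le:1}, the far-field reduction in the spirit of Lemma~\ref{le:farfield51}, the lossy-layer energy estimate built from Lemma~\ref{le:estL1} together with \eqref{eq:Bprop52} and \eqref{eq:eigenB1}, and the localized trace estimates in the spirit of Lemma~\ref{le:estiphi41} together with Lemma~\ref{le:extenlocal1}. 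The only genuinely new ingredient is the flat piece $S_\delta^0$, whose collapse is codimension $1$: there the change of variables gives no gain in $\delta$ and the operator $-\tfrac{I}{2}+\Mcal_{D_\delta}^\omega$ does \emph{not} degenerate to $-\tfrac{I}{2}$, so one must work with the full boundedly invertible operator rather than with the scalar ansatz \eqref{eq:tmpexp1}.

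The apertures $\Sigma_p$, $\Sigma_d$ enter precisely through $S_\delta^0$. On $S_\delta^0$ the exterior normal equals $\pm\mathbf{n}$, hence $\nu\times\bE^i=\pm\,\mathbf{n}\times\mathbf{p}\,e^{i\omega\Bz_y\cdot\mathbf{d}}$ and, by the plane-wave Taylor expansion \eqref{eq:exp1}, the normal-derivative correction $\nabla\bE^i(\Bz_y)(\By-\Bz_y)$ is a multiple of $(\mathbf{d}\cdot\mathbf{n})\,\mathbf{p}$; for $\mathbf{p}\in\Sigma_p$ and $\mathbf{d}\in\Sigma_d$ the first is $O(\epsilon)$ and the second $O(\epsilon\delta)$. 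Since on $S_\delta^0$ there is no compensating power of $\delta$ --- in contrast with the facade $S_\delta^f$ of the full cloak, whose surface measure scales like $\delta$ and around which, moreover, the angular mean of the normals vanishes, cf.\ \eqref{eq:techpt1} --- this is exactly what keeps the $S_\delta^0$ contribution to $\mathbf{A}_\infty^\delta$ from being $O(1)$ and makes it $O(\epsilon)$ instead. On $S_\delta^1$ and $S_\delta^2$ the normal is not aligned with $\mathbf{n}$, so $\nu\times\bE^i$ is only $O(1)$ there and, a half-cylinder (resp.\ a quarter-ball) not enjoying the cancellation of a closed tube, a genuine order-one mean survives; but the surface measures on $S_\delta^1$, $S_\delta^2$ now carry factors $\delta$, $\delta^2$, so these pieces contribute only $O(\delta)$ and $O(\delta^2)$. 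Altogether the incident field yields the $C(\epsilon+\delta)$ term of \eqref{eq:fnles51}.

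The remaining $C\delta^{2(\beta_2-t/2)}$ term is the contribution of the pulled-back tangential trace $\tilde\Phi:=\nu\times\bE_\delta|_+$, controlled through the lossy layer. From the energy identity of Lemma~\ref{le:estL1}, the boundedness of $\GL$, and the eigenvalue structure of $\BB$ on each piece (cf.\ \eqref{eq:Bprop52}, \eqref{eq:eigenB1}), I expect an inequality bounding $\int_{D_\delta^j\setminus\overline D_{\delta/2}}\|\bE_\delta\|^2\,d\Bx$ by $C\delta^{\lambda_j-t}\big(\|\nu\times\bE_\delta^+\|_{\mathrm{TH}_{\mathrm{div}}^{-1/2}(\p B_R)}^2+\delta\,\|\ti\ba\|_{\mathrm{TH}^{-1/2}(\p D)}\big)$ with $\lambda_0=0$, $\lambda_1=1$, $\lambda_2=2$, together with trace estimates $\|\tilde\Phi\|_{\mathrm{TH}^{-1/2}(S^j)}\le C\delta^{-\gamma_j+\beta_j}\|\bE_\delta\|_{L^2(D_\delta^j\setminus D_{\delta/2})}$ ($\gamma_0=1/2$, $\gamma_1=1$, $\gamma_2=3/2$, the $\beta_j$ being those of the statement). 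Feeding these into the far-field reduction and into the bootstrap $\|\nu\times\bE_\delta^+\|_{\mathrm{TH}_{\mathrm{div}}^{-1/2}(\p B_R)}\le C\sum_{j=0}^2\delta^{j}\|\tilde\Phi\|_{\mathrm{TH}^{-1/2}(S^j)}+C(\epsilon+\delta)$ (the analogue of \eqref{eq:uunn1}), and using $\beta_2\le\beta_1\le\beta_0$ to discard the subdominant pieces, I would arrive at a self-improving inequality of the schematic form $\|\tilde\Phi\|_{\mathrm{TH}^{-1/2}(S^0)}\le C\delta^{\beta_2-t/2}\big(\|\tilde\Phi\|_{\mathrm{TH}^{-1/2}(S^0)}^{1/2}+\epsilon+\delta\big)$; since $\beta_2-t/2>0$ \emph{strictly}, Young's inequality gives $\|\tilde\Phi\|_{\mathrm{TH}^{-1/2}(S^0)}\le C\big(\delta^{2(\beta_2-t/2)}+\epsilon+\delta\big)$, and feeding this (with the analogous weaker bounds on $\|\tilde\Phi\|_{\mathrm{TH}^{-1/2}(S^1)}$ and $\|\tilde\Phi\|_{\mathrm{TH}^{-1/2}(S^2)}$) back into the far-field formula yields \eqref{eq:fnles51}.

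The main obstacle is the analysis on $S_\delta^0$. Because neither the surface measure nor $-\tfrac{I}{2}+\Mcal_{D_\delta}^\omega$ supplies a small factor there, the density cannot be reduced to a scalar multiple of the trace as in \eqref{eq:tmpexp1}; one must invert the full boundary operator and, more delicately, keep careful account of the coupling of $S^0$ with the measure-$O(\delta)$ neighbours $S^1$, $S^2$ along the edges and corners, where the three local parametrizations overlap and the kernels of $\Mcal_{D_\delta}^\omega$ and $\Lcal_{D_\delta}^\omega$ become near-singular. It is precisely this absence of a $\delta$-gain on $S_\delta^0$ that forces the strict condition $\beta_2-t/2>0$, as opposed to $\beta'-t/2\ge 0$ in Theorem~\ref{th:main1}, and the factor $2$ in the exponent $\delta^{2(\beta_2-t/2)}$ is the price of the Young-type absorption in the closing step; everything else is the slab-geometry transcription of the arguments of Section~\ref{sect:4}.
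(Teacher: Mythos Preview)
Your outline is essentially the paper's own argument: the far-field reduction in the spirit of Theorem~\ref{th:impotestm51}, the trace estimates $\|\tilde\Phi\|_{\mathrm{TH}^{-1/2}(S^j)}\le C\delta^{-(j+1)/2+\beta_j}\|\bE_\delta\|_{L^2(D_\delta^j\setminus D_{\delta/2})}$, the lossy-layer energy bound, and the bootstrap closing via a Young-type absorption are exactly the steps taken in the paper, and your reading of where $\epsilon$ enters (through $\nu\times\bE^i=\pm\mathbf n\times\mathbf p\,e^{i\omega\Bz_y\cdot\mathbf d}$ on $S^0$) and of why the strict inequality $\beta_2-t/2>0$ is forced is correct.

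There is one inaccuracy worth flagging. Your expected energy inequality
\[
\int_{D_\delta^j\setminus\overline D_{\delta/2}}\|\bE_\delta\|^2\,d\Bx
\le C\delta^{j-t}\Big(\|\nu\times\bE_\delta^+\|_{\mathrm{TH}_{\mathrm{div}}^{-1/2}(\p B_R)}^2+\delta\,\|\ti\ba\|_{\mathrm{TH}^{-1/2}(\p D)}\Big)
\]
is too optimistic: the very ``no $\delta$-gain on $S^0$'' phenomenon you describe for $\Mcal_{D_\delta}^\omega$ also hits $\Lcal_{D_\delta}^\omega$ in the cross term $\mathbb R_1=\big|\Re\int_{\p D_\delta}\overline{\bE^i}\cdot(\nu\times\bH_\delta^+)\big|$ of Lemma~\ref{le:estL1}. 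By \eqref{eq:arg52} one has $\Lcal_{D_\delta}^\omega[\ba]\big|_{S_\delta^0}=\Lcal_{S^0}^\omega[\tilde\ba]+O(\delta\|\tilde\ba\|)$ with \emph{no} small prefactor, so the correct bound carries an additional term $\|\tilde\ba\|_{\mathrm{TH}^{-1/2}(S^0)}$ without a factor of $\delta$ (this is the content of Lemma~\ref{le:mainpf51}). Via \eqref{eq:111} this extra term becomes $\|\tilde\Phi\|_{\mathrm{TH}^{-1/2}(S^0)}+\epsilon$, and it is precisely the origin of the $\|\tilde\Phi\|_{\mathrm{TH}^{-1/2}(S^0)}^{1/2}$ and $\epsilon^{1/2}$ contributions that you (correctly) placed in the closing self-improving inequality. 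So your schematic endpoint is right, but the route to it passes through this $O(1)$ term in the energy estimate rather than through the $O(\delta)$ term you wrote; once corrected, the remainder of your bootstrap goes through as in the paper.
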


\begin{rem}
Similar to Remark~\ref{lem:physical1}, one readily has from Theorems~\ref{thm:transop} and \ref{th:main2} that the push-forwarded structure in \eqnref{eq:clkstruc1} produces an approximate partial-cloaking device which is capable of nearly cloaking an arbitrary EM content.
The highest accuracy of such a construction can be achieved, say for example, by taking $r=t=0$ and $s=5/2$.
\end{rem}
\begin{rem}
Since $\mathbf{p}\cdot\mathbf{d}=0$ (cf. \eqref{eq:pdo}), one has
\begin{equation}\label{eq:relationn1}
(\mathbf{d}\cdot\mathbf{n})\mathbf{p}=\mathbf{d}\times(\mathbf{p}\times\mathbf{n}),
\end{equation}
from which one can infer that the definition of $\Sigma_d$ in \eqref{eq:partialmaincond51} is redundant. However, we specified it for clarity and definiteness.
\end{rem}
%
%\begin{rem}\label{lem:physical12}
%Similar to Remark~\ref{lem:physical1}, one can immediately infer by Theorem~\ref{th:main2} that the push-forwarded structure
%$$\{\Omega_e; \Gs, q\}=(F_\delta)_*\{\Omega; \Gs_\delta, q_\delta\}$$
%with $F_\delta$ defined by \eqnref{eq:blow1} and $\{\Omega_e; \Gs_\delta, q_\delta\}$ defined in the theorem, produces an approximate partial cloaking device. The incident aperture of the approximate partial-cloak is given by
%\[
%\Lambda_i=\{\Bd\in\mathbb{S}^2;\ |\Bd\cdot \Bn|\leq \epsilon\},
%\]
%whereas the observation aperture is $\Lambda_o=\mathbb{S}^2$.
%\end{rem}
%
%
%
\subsection{Auxiliary lemmas and asymptotic expansions}

We shall follow a similar strategy of the proof for Theorem~\ref{th:main1} in proving Theorem~\ref{th:main2}. In what follows, we adopt similar notations as those in Section~\ref{sect:4}. If we let $\Bz$ denote the space variable on $\GG_0$, then for any $\By\in \p D_q$, we define $\mathbf{z}_{y}$ to be the projection of $\By$ onto $\GG_0$. We also define $\tilde\ba (\tdy):=\ba (\By)$ and $\tdy:= A(\By)\in \overline{D}$ for $\By\in \overline{D}_\delta$.

The following lemma is a counterpart to Lemma \ref{le:1} in Section~\ref{sect:4}.
\begin{lem}\label{le:5.1}
Let $D_\delta$ be described in \eqref{eq:vtq} and \eqref{eq:struc51}. Define
\beq\label{eq:Msca51}
\Mcal_{S^0}^{\omega}[\tilde\ba ](\tdx):=\mathrm{p.v.}\quad \mathbf{\nu}_{\ti{\mathbf{x}}}\times \nabla_{\mathbf{z}_{\ti{x}}}\times \int_{S^0}G_\omega(\Bz_{\ti{x}}-\Bz_{\ti{y}})\ti\ba (\tdy)\, d\Gs_{\ti{y}},
\eeq
and
$$
\Mcal_{S^{2}}[\tilde\ba ](\tdx):=\mathrm{p.v.}\quad \mathbf{\nu}_{\ti{\mathbf{x}}}\times \nabla_{\ti{\mathbf{x}}}\times \int_{S^{2}}G_\omega(\tdx-\tdy)\ti\ba (\tdy)\, d\Gs_{\ti{y}}.
$$
Define
$$\Lcal_{D_\delta}^{\omega}[\ba](\Bx):=\nu_{\mathbf{x}}\times\nabla_{\mathbf{x}}\times\nabla_{\mathbf{x}}
\times \Acal_{D_\delta}^{\omega}[\ba](\Bx), $$
and
$$\Lcal_{S^0}^{\omega}[\ti\ba](\Bx):=\nu_{\ti{\mathbf{x}}}\times\nabla_{\Bz_{\ti x}}\times\nabla_{\mathbf{z}_{\ti x}}
\times \int_{S^0}G_\omega(\Bz_{\ti{x}}-\Bz_{\ti{y}})\ti\ba (\tdy)\, d\Gs_{\ti{y}}.$$
Then one has
\beq\label{eq:arg51}
\Mcal_{D_\delta}^\omega[\ba ](\Bx)=
\left\{
\begin{array}{ll}
\Mcal_{S^0}^\omega[\tilde\ba ](\tdx)+\Ocal(\delta\|\tilde\ba \|_{\mathrm{TH}^{-1/2}(\p D)}),
& \Bx\in S_\delta^0 \cup S_\delta^1,  \\
\Mcal_{S^0}^\omega[\tilde\ba ](\tdx)+\Mcal_{S^{2}}[\tilde\ba ](\tdx) +\Ocal(\delta\|\tilde\ba \|_{\mathrm{TH}^{-1/2}(\p D)}),
& \Bx\in S_\delta^2,
\end{array}
\right.
\eeq
and
\beq\label{eq:arg52}
\Lcal_{D_\delta}^{\omega}[\ba](\Bx)=
\left\{
\begin{array}{ll}
\Lcal_{S^0}^\omega[\tilde\ba ](\tdx)+ \Ocal(\delta\|\tilde\ba \|_{\mathrm{TH}^{-1/2}(\p D)}),
& \Bx\in S_\delta^0 \cup S_\delta^1, \\
\frac{1}{\delta}\nu_{\ti{\mathbf{x}}}\times\nabla_{\ti{\mathbf{x}}}\times\nabla_{\ti{\mathbf{x}}}\times\Acal_{S^{2}}[ \tilde\ba ](\tdx) +\Ocal(\|\tilde\ba \|_{\mathrm{TH}^{-1/2}(\p D)}),
& \Bx\in S_\delta^2.
\end{array}
\right.
\eeq
\end{lem}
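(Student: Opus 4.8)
The plan is to rerun, almost verbatim, the argument that proves Lemma~\ref{le:1}, the single structural novelty being that the planar generating set makes $\p D_\delta$ split into the three pieces $S_\delta^0$, $S_\delta^1$, $S_\delta^2$ of \eqnref{eq:struc51}, whose surface elements scale under the blowup $A$ as $d\Gs_y=d\Gs_{\tdy}$ on $S_\delta^0$, $d\Gs_y=\delta\,d\Gs_{\tdy}$ on $S_\delta^1$ and $d\Gs_y=\delta^2\,d\Gs_{\tdy}$ on $S_\delta^2$ --- immediate from \eqnref{eq:Bprop51} and the analogous explicit formulae for $A$ on the side half-cylinders and on the corner quarter-balls --- whereas the curve case of Section~\ref{sect:4} carries only the two rates $\delta$ (facade) and $\delta^2$ (end caps). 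Accordingly $S_\delta^0$, which does not shrink, will produce the non-vanishing leading operators $\Mcal_{S^0}^\omega$ and $\Lcal_{S^0}^\omega$ with no power of $\delta$ in front; $S_\delta^1$ will feed only the $\Ocal(\delta)$ remainder (the extra factor $\delta$ coming from $d\Gs_y$); and $S_\delta^2$ will play exactly the role the end caps played in Lemma~\ref{le:1}, contributing the extra terms $\Mcal_{S^2}[\tilde\ba]$ and $\frac{1}{\delta}\,\nu_{\tdx}\times\nabla_{\tdx}\times\nabla_{\tdx}\times\Acal_{S^2}[\tilde\ba]$ precisely when $\Bx\in S_\delta^2$.

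First I would record the geometric expansion. Writing $\tdy=A(\By)$ and using that $A$ fixes the in-plane coordinates while dilating the transverse direction(s) by $\delta^{-1}$, one obtains for $\By\in D_\delta^0\cup D_\delta^1$ the same normal form as in Section~\ref{sect:4},
\[
\Bx-\By=(\Bz_{\tdx}-\Bz_{\tdy})+\delta\bigl((\tdx-\Bz_{\tdx})-(\tdy-\Bz_{\tdy})\bigr),
\]
while for $\Bx,\By$ lying on the \emph{same} corner quarter-ball $D_\delta^2$ one has the exact relation $\Bx-\By=\delta(\tdx-\tdy)$, both projections onto $\GG_0$ having collapsed to the corner point. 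Expanding $\|\Bx-\By\|$, $\|\Bx-\By\|^{-1}$, $e^{i\omega\|\Bx-\By\|}$ and hence $\nabla_\Bx G_\omega(\Bx-\By)$ in powers of $\delta$, exactly as in the computations carried out in the proof of Lemma~\ref{le:1}, gives leading kernels $G_\omega(\Bz_{\tdx}-\Bz_{\tdy})$ and $\nabla_{\Bz_{\tdx}}G_\omega(\Bz_{\tdx}-\Bz_{\tdy})$ on $S_\delta^0\cup S_\delta^1$ (with the outward normals preserved, $\nu_\By=\nu_{\tdy}$, by \eqnref{eq:le22}), and $\delta^{-1}$- and $\delta^{-2}$-homogeneous kernels in $\tdx-\tdy$ on $S_\delta^2$.

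Next, for $\Mcal_{D_\delta}^\omega$ I would invoke the same vector-calculus identity used in the proof of Lemma~\ref{le:1},
\[
\Mcal_{D_\delta}^\omega[\ba](\Bx)=\int_{\p D_\delta}\nabla_\Bx G_\omega(\Bx-\By)\,(\nu_\Bx\cdot\ba(\By))\,d\Gs_y-\int_{\p D_\delta}\bigl(\nu_\Bx\cdot\nabla_\Bx G_\omega(\Bx-\By)\bigr)\ba(\By)\,d\Gs_y,
\]
split the integral over $S_\delta^0\cup S_\delta^1\cup S_\delta^2$, change variables on each piece using the area scalings above together with $\ba(\By)=\tilde\ba(\tdy)$, and substitute the expansions. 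For $\Bx\in S_\delta^0\cup S_\delta^1$ the $S_\delta^0$-part reduces to $\Mcal_{S^0}^\omega[\tilde\ba](\tdx)$, the $S_\delta^1$-part carries the extra factor $\delta$ from $d\Gs_y$ and is $\Ocal(\delta\|\tilde\ba\|_{\mathrm{TH}^{-1/2}(\p D)})$, and the $S_\delta^2$-part is $\Ocal(\delta\|\tilde\ba\|_{\mathrm{TH}^{-1/2}(\p D)})$ as well; for $\Bx\in S_\delta^2$ the self-interaction over $S_\delta^2$ additionally yields $\Mcal_{S^2}[\tilde\ba](\tdx)$, the $\delta^{-2}$ in $\nabla_\Bx G_\omega$ exactly cancelling the $\delta^2$ in $d\Gs_y$, just as for the end caps in \eqnref{eq:arg1}. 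This establishes \eqnref{eq:arg51}. For $\Lcal_{D_\delta}^\omega$ I would begin from \eqnref{eq:est111}, $\nabla\times\nabla\times\Acal_{D_\delta}^\omega=\omega^2\Acal_{D_\delta}^\omega+\nabla\nabla\cdot\Acal_{D_\delta}^\omega$, insert for the first summand the vectorial single-layer expansion (the analogue of formula (4.17) in \cite{DLU15}, which replaces $\Acal_{D_\delta}^\omega$ by a single-layer potential over $S^0$ on $S_\delta^0\cup S_\delta^1$, and by that potential together with a single-layer potential over the corner balls $S^2$ on $S_\delta^2$), and control $\nabla\nabla\cdot\Acal_{D_\delta}^\omega$ by the blowup identity of Proposition~5.2 in \cite{ADM14}, which supplies the $1/\delta$ prefactor on $S_\delta^2$ and the $\Ocal(1)$ reduction on $S_\delta^0\cup S_\delta^1$. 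Combining the two summands as in the proof of Lemma~\ref{le:1} yields \eqnref{eq:arg52}.

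The step I expect to be the main obstacle --- and essentially the only point where the planar geometry is genuinely more delicate than the curve geometry of Lemma~\ref{le:1} --- is the uniform treatment of the two interfaces at which the blowup rate changes: the edges of the square, where $D_\delta^0$ adjoins $D_\delta^1$, and the quarter-circles, where $D_\delta^1$ adjoins $D_\delta^2$. Across such an interface a point $\Bx$ on the slower-shrinking piece can lie within distance $\Ocal(\delta)$ of the \emph{entire} faster-shrinking piece, so the naive bound on the corresponding cross-integral is only $\Ocal(1)$ rather than $\Ocal(\delta)$. To push these contributions into the $\Ocal(\delta\|\tilde\ba\|_{\mathrm{TH}^{-1/2}(\p D)})$ remainder one has to combine the near-singular kernel estimates with the smallness of the surface measure of the transition layers and with the boundedness of the reduced layer operators $\Mcal_{S^0}^\omega$, $\Lcal_{S^0}^\omega$, $\Acal_{S^2}$ on the relevant $\mathrm{TH}^{-1/2}$-type spaces (cf. \cite{Monk03,Grie08}); this is the counterpart of, but needs more bookkeeping than, the single facade--end-cap transition handled in Lemma~\ref{le:1}. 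Once this is in place, all remaining estimates are Taylor-remainder bounds identical to those in the proof of Lemma~\ref{le:1}, and one checks, exactly as there, that the various powers of $\delta$ in \eqnref{eq:arg51}--\eqnref{eq:arg52} match after the $\BB$-factors recorded in Lemma~\ref{le:changerela51} are collected.
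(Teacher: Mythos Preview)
Your proposal is correct and follows essentially the same route as the paper. The paper's own proof is in fact far terser than yours: it merely records the expansions $\|\Bx-\By\|=\|\Bz_{\tilde x}-\Bz_{\tilde y}\|+\Ocal(\delta)$, $\langle\Bx-\By,\nu_\Bx\rangle=\langle\Bz_x-\Bz_y,\nu_{\tdx}\rangle+\Ocal(\delta)$, $e^{i\omega\|\Bx-\By\|}=e^{i\omega\|\Bz_{\tilde x}-\Bz_{\tilde y}\|}+\Ocal(\delta)$ for $\Bx\in S_\delta^0\cup S_\delta^1$, and $\|\Bx-\By\|=\delta\|\tdx-\tdy\|$, $e^{i\omega\|\Bx-\By\|}=1+\Ocal(\delta)$ for $\Bx\in S_\delta^2$, and then declares that ``the proof can be completed by using the similar expansion method as that in the proof of Lemma~\ref{le:1}.'' Your vector-calculus identity, the splitting over $S_\delta^0\cup S_\delta^1\cup S_\delta^2$ with the area scalings $1,\delta,\delta^2$, and the appeal to \eqnref{eq:est111} together with the results from \cite{DLU15,ADM14} are exactly the ingredients of the proof of Lemma~\ref{le:1} transplanted here, which is all the paper intends.

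Regarding the interface concern you flag: the paper does not address it either --- it is treated at the same heuristic level as in Lemma~\ref{le:1}, where the analogous facade/end-cap transition is likewise passed over without comment. So this is not a divergence from the paper's argument but rather a point on which you are more scrupulous than the paper itself.
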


\begin{proof}
By following a similar argument to that in the proof of Lemma \ref{le:1}, one can compute for $\Bx\in S_\delta^0 \cup S_\delta^1 $ that
$$
\|\Bx-\By\|=\|\Bx-\mathbf{z}_x-(\By-\mathbf{z}_y)+ \mathbf{z}_x-\mathbf{z}_y\|=\|\mathbf{z}_{\tilde{x}}-\mathbf{z}_{\tilde{y}}\|+ \Ocal(\delta),
$$
and
$$
\la \Bx-\By, \mathbf{\nu}_{\mathbf{x}}\ra= \la\mathbf{z}_x-\mathbf{z}_y, \mathbf{\nu}_{\tilde{\mathbf{x}}}\ra+\Ocal(\delta),
\quad e^{i\omega|\Bx-\By|}= e^{i\omega|\mathbf{z}_{\tilde{x}}-\mathbf{z}_{\tilde{y}}|}+\Ocal(\delta).
$$
One also notes that for $\Bx\in S_\delta^2$
$$
\|\Bx-\By\|=\delta\|\tdx-\tdy\|, \quad e^{i\omega\|\Bx-\By\|}=1+ \Ocal(\delta).
$$
By using the above facts, the proof can then be completed by using the similar expansion method as that in the proof of Lemma \ref{le:1}.
\end{proof}

Next, by straightforward calculations, one can show that for $D_\delta$ described in \eqref{eq:vtq} and \eqref{eq:struc51}, there holds the following far field expansion for $\Bx\in \RR^3\setminus \overline{D}_\delta$,
\begin{align}
\Scal_{D_\delta}^\omega[\ba ](\Bx)= & \int_{\p D_\delta} G_\omega(\Bx-\By) \ba (\By) d\Gs_y =  \int_{S^0} G_\omega(\Bx-\mathbf{z}_{\tilde{y}})\tilde\ba (\tdy)d\Gs_{\tilde{y}}\nonumber\\
& +\delta \int_{S^1} G_\omega(\Bx-\mathbf{z}_{\tilde{y}}) \tilde\ba (\tdy)d\Gs_{\tilde{y}}
+ \delta \int_{S^0} (\tdy-\mathbf{z}_{\tilde{y}})\cdot\nabla G_\omega(\Bx-\mathbf{z}_{\tilde{y}})\tilde\ba (\tdy)d\Gs_{\tilde{y}}\nonumber \\
& + \Ocal\Big(\delta^2 \|\Bx\|^{-1}\|\tilde\ba \|_{\mathrm{TH}^{-1/2}(\p D)}\Big).\label{eq:expan51}
\end{align}
With the above expansion, we can show the following theorem
\begin{thm}\label{th:impotestm51}
Let $\bE_\delta$ be the solution to \eqnref{eq:sys1} with $D_\delta$ described in \eqref{eq:vtq} and \eqref{eq:struc51}. Define
$\tilde\Phi(\tdy):=\Phi(\By)=\mathbf{\nu}_{\mathbf{y}} \times  \bE_\delta(\By)\Big|_{\partial D_\delta}^+$,
then there holds for $\Bx\in\mathbb{R}^3\backslash\overline{D}$
\begin{align}
(\bE_\delta-\bE^i)(\Bx)= \int_{\GG_0}G_\omega(\Bx-\mathbf{z}_{\tilde{y}})\mathbb{M}[\mathbf{n}\times \bE^i(\mathbf{z})](\Bz_{\tilde y})\, d\Gs_{\mathbf{z}_{\tilde{y}}} \nonumber \\
+\Ocal\left(\|\Bx\|^{-1}\left(\|\ti\Phi\|_{\mathrm{TH}^{-1/2}(S^0)}+\delta(\|\ti\Phi\|_{\mathrm{TH}^{-1/2}(\p D)}+1)\right)\right),\label{eq:expan53}
\end{align}
where
\beq\label{eq:op511}
\mathbb{M}:=\left(-\frac{I}{4}+\Mcal_{\GG_0}^\omega\right)^{-1}\Mcal_{\GG_0}^\omega
\left(\frac{I}{4}+\Mcal_{\GG_0}^\omega\right)^{-1}
\eeq
with $\Mcal_{\GG_0}^\omega$ defined by
\beq\label{eq:op512}
\Mcal_{\GG_0}^\omega[\Theta(\Bz)](\Bz_{\ti x}):=\mathbf{n}\times \nabla_{\mathbf{z}_{\ti x}}\times \int_{\GG_0}G_{\omega}(\Bz_{\ti x}
-\Bz_{\ti y})\Theta(\Bz_{\ti y})\, d\Gs_{\Bz_{\ti y}}.
\eeq
\end{thm}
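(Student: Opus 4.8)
The plan is to mimic the argument used for Theorem~\ref{prop:2} in the full-cloak case, but now tracking the leading-order (rather than second-order) contributions, since the blowup transformation for the planar generating set does not shrink the two tangential directions. First I would start from the integral representation \eqnref{eq:vis1} for $\bE_\delta-\bE^i$ together with the integral equation \eqnref{eq:trans1}, which gives $\ba=(-I/2+\Mcal_{D_\delta}^\omega)^{-1}[\mathbf{\nu}\times(\bE_\delta-\bE^i)|_+]$ on $\p D_\delta$. Then, using the change-of-variable relations of Lemma~\ref{le:5.1} (specifically \eqnref{eq:arg51}), I would pull $\Mcal_{D_\delta}^\omega$ back to the reference configuration: on $S^0\cup S^1$ it becomes $\Mcal_{S^0}^\omega$ up to $\Ocal(\delta)$, and on $S^2$ it picks up the additional singular piece $\Mcal_{S^2}$. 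Restricting to the density on $S^0$ and expanding $\bE^i$ around its projection onto $\GG_0$ (the analogue of \eqnref{eq:exp1}--\eqnref{eq:tmpexp1}), I would obtain $\tilde\ba|_{S^0} = -2\ti\Phi + 2\,\mathbf{n}\times\bE^i(\Bz) + \Ocal(\delta(\|\ti\Phi\|+1))$, modulo operator corrections.

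Next I would plug this into the far-field expansion \eqnref{eq:expan51}: the dominant term of $\Scal_{D_\delta}^\omega[\ba]$ is $\int_{S^0}G_\omega(\Bx-\Bz_{\ti y})\tilde\ba(\tdy)\,d\Gs_{\ti y}$, while the $S^1$ and $S^0$-gradient terms carry an explicit factor $\delta$, and the remainder is $\Ocal(\delta^2\|\Bx\|^{-1}\|\ti\ba\|)$. Using the surface-integration identity of the type \eqnref{eq:techpt1}, the integral over each disk-fibre of $S^0$ collapses the $\nu$-dependence so that the $\ti\Phi=0$-free part of the expansion reduces to an integral over $\GG_0$ of $G_\omega(\Bx-\Bz)$ against the pushed-forward density. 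The key algebraic step is to identify this reduced density with $\mathbb{M}[\mathbf{n}\times\bE^i]$: after integrating over the normal-disk fibres, the operators $-I/2+\Mcal_{S^0}^\omega$ and $\nabla\times\Acal$ collapse (via the two symmetric half-space contributions, one from each side of the plane $\mathbb{P}_2$, which is exactly why the factors become $I/4$ rather than $I/2$) to the curve-operators $\pm I/4+\Mcal_{\GG_0}^\omega$ appearing in \eqnref{eq:op511}--\eqnref{eq:op512}. This yields the stated leading term plus the error $\Ocal(\|\Bx\|^{-1}(\|\ti\Phi\|_{\mathrm{TH}^{-1/2}(S^0)}+\delta(\|\ti\Phi\|_{\mathrm{TH}^{-1/2}(\p D)}+1)))$, where the $\|\ti\Phi\|_{\mathrm{TH}^{-1/2}(S^0)}$ term records the part of $\tilde\ba|_{S^0}$ proportional to $\ti\Phi$ that we did not collapse.

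The main obstacle, I expect, will be the reduction of the planar layer-potential operators to the one-dimensional curve operators $\pm I/4+\Mcal_{\GG_0}^\omega$, i.e. justifying the precise form \eqnref{eq:op511} for $\mathbb{M}$. One must carefully integrate the vectorial single-layer kernel over the normal-disk fibres of $S^0$ and over the two flat faces $\GG_\delta^1,\GG_\delta^2$, keep track of the jump relations of Proposition~\ref{propjumpM} on each face as $\delta\to 0$, and verify that the $1/4$-factors (reflecting that each face contributes half of a standard $\pm1/2$ jump, and that the two faces are symmetric about $\mathbb{P}_2$) assemble correctly; uniform invertibility of $\pm I/4+\Mcal_{\GG_0}^\omega$ on the appropriate trace space must also be invoked. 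A secondary technical point is controlling the $S^1$ (half-cylinder) and $S^2$ (corner) contributions: by \eqnref{eq:arg51}--\eqnref{eq:arg52} the $S^1$ part is $\Ocal(\delta)$ and the $S^2$ part, though not uniformly small in operator norm, only feeds into the $\Ocal(\delta(\|\ti\Phi\|+1))$ error once the density bound $\|\ti\ba\|_{\mathrm{TH}^{-1/2}(\p D)}\le C(\|\ti\Phi\|_{\mathrm{TH}^{-1/2}(S^0)}+\delta(\|\ti\Phi\|_{\mathrm{TH}^{-1/2}(\p D)}+1))$ — the analogue of \eqnref{eq:estphi11} — is established; this density estimate follows from the same pull-back of the integral equation. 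Assembling these pieces gives \eqnref{eq:expan53}.
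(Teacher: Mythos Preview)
Your overall scaffolding---integral representation, pull-back via Lemma~\ref{le:5.1}, far-field expansion \eqnref{eq:expan51}, reduction to an operator on $\GG_0$---matches the paper, but the reduction step as you describe it has a real gap. First, in the planar case $\Mcal_{D_\delta}^\omega=\Mcal_{S^0}^\omega+\Ocal(\delta)$ on $S_\delta^0\cup S_\delta^1$ with $\Mcal_{S^0}^\omega$ of \emph{order one}, unlike the curve case where it is $\Ocal(\delta)$. Hence the leading density is not $-2\ti\Phi+2\,\mathbf{n}\times\bE^i(\Bz)$ but rather $\tilde\ba=(-\tfrac{I}{2}+\Mcal_{S^0}^\omega)^{-1}[\nu\times\bE^i(\Bz)]+\Ocal(\|\ti\Phi\|_{S^0}+\delta(\|\ti\Phi\|_{\p D}+1))$ on $S^0\cup S^1$; the nontrivial inverse is precisely what generates $\mathbb{M}$.

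Second, there are no ``normal-disk fibres'' on $S^0$ and no analogue of \eqnref{eq:techpt1} here: $S^0=\GG^1\cup\GG^2$ already \emph{is} two flat copies of $\GG_0$ with $\nu=\pm\mathbf{n}$. The paper's collapse mechanism is a reflection trick, not a fibre integration. For $\tdy\in\GG^1$ set $\tilde\psi(\tdy):=\tilde\ba(\tdy-2\mathbf{n})$; since the normal flips sign on the opposite face while $\Bz_{\ti y}$ and the kernel in \eqnref{eq:Msca51} do not, $\tilde\psi$ solves the same equation with the identity's sign reversed, so $\tilde\psi=(\tfrac{I}{2}+\Mcal_{S^0}^\omega)^{-1}[\mathbf{n}\times\bE^i(\Bz)]+\ldots$. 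Then $\int_{S^0}G_\omega(\Bx-\Bz_{\ti y})\tilde\ba=\int_{\GG^1}G_\omega(\Bx-\Bz_{\ti y})(\tilde\ba+\tilde\psi)$, and the resolvent-type identity
\[
\Big(-\tfrac{I}{2}+M\Big)^{-1}+\Big(\tfrac{I}{2}+M\Big)^{-1}
=2\Big(-\tfrac{I}{2}+M\Big)^{-1}M\,\Big(\tfrac{I}{2}+M\Big)^{-1}
\]
produces the factored operator $\hat{\mathbb M}=2(-\tfrac{I}{2}+\Mcal_{S^0}^\omega)^{-1}\Mcal_{S^0}^\omega(\tfrac{I}{2}+\Mcal_{S^0}^\omega)^{-1}$. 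The $I/4$'s then appear because, on data depending only on $\Bz$, one has $\Mcal_{S^0}^\omega=2\Mcal_{\GG_0}^\omega$ (the two sheets contribute equally), and rescaling $-\tfrac{I}{2}+2\Mcal_{\GG_0}^\omega=2(-\tfrac{I}{4}+\Mcal_{\GG_0}^\omega)$ etc.\ converts $\hat{\mathbb M}$ into \eqnref{eq:op511}. The jump relations of Proposition~\ref{propjumpM} are not what drive the $1/4$; it is purely this doubling, so your ``half of a $\pm 1/2$ jump'' heuristic is pointing at the wrong mechanism.
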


\begin{proof}
We first recall that the solution to \eqnref{eq:sys1} can be represented by \eqnref{eq:vis1} and \eqnref{eq:vis2},
where the density function $\ba $ satisfies \eqnref{eq:trans1}.
Next, by using the fact that
$$
-\frac{I}{2}+\Mcal_{S^0}^\omega : \mathrm{TH}^{-1/2}(S^0)\rightarrow \mathrm{TH}^{-1/2}(S^0)
$$
is invertible (cf. \cite{nedelec,Tay}), and using \eqnref{eq:trans1}, \eqnref{eq:arg51}, along with the use of the expansion of $\bE^i$ in $\Bz$, one has by direct calculations that
\begin{align}
\tilde{\ba }(\tdy)= &\Big(-\frac{I}{2}+\Mcal_{S^0}^\omega\Big)^{-1}\left[\nu\times\bE^i(\Bz)+\ti\Phi\right](\tdy) + \Ocal(\delta(\|\ti\Phi\|_{\mathrm{TH}^{-1/2}(\p D)}+1))\nonumber \\
= &\Big(-\frac{I}{2}+\Mcal_{S^0}^\omega\Big)^{-1}\left[\nu\times\bE^i(\Bz)\right](\tdy)\nonumber \\
& + \Ocal\left(\|\ti\Phi\|_{\mathrm{TH}^{-1/2}(S^0)}+\delta(\|\ti\Phi\|_{\mathrm{TH}^{-1/2}(\p D)}+1)\right)\label{eq:111}
\end{align}
for all $\tdy \in S^0 \cup S^1$.
Noting that for $\tdy\in \GG^1$, $(\tdy-2\mathbf{n})\in \GG^2$, we define
$\tilde\psi(\tdy):=\tilde\ba (\tdy-2\mathbf{n})$ for $\tdy\in \GG^1$ and
$\tilde\psi(\tdy):=\tilde\ba (\tdy+2\mathbf{n})$ for $\tdy\in \GG^2$. By using the fact that
$$
\mathbf{\nu}_{\tilde{\mathbf{y}}-2n_{\tilde{\mathbf{y}}}}= - \mathbf{\nu}_{\tilde{\mathbf{y}}}=-\mathbf{n} \quad \mbox{for} \quad \tdy\in \GG^1,
$$
and the definition \eqnref{eq:Msca51}, one obtains (assuming for a while that $\tdx\in S^0$)
\begin{align}
\tilde{\ba }(\tdy-2\mathbf{n})=\tilde\psi(\tdy)
= &\Big(\frac{I}{2}+\Mcal_{S^0}^\omega\Big)^{-1}\Big[\mathbf{n}\times\bE^i(\Bz_{\ti x})\Big](\tdy) \nonumber \\
& + \Ocal\left(\|\Bx\|^{-1}\left(\|\ti\Phi\|_{\mathrm{TH}^{-1/2}(S^0)}+\delta(\|\ti\Phi\|_{\mathrm{TH}^{-1/2}(\p D)}+1)\right)\right)\label{eq:expan522}
\end{align}
for all $\tdy\in \GG^1$.
By inserting \eqnref{eq:111} and \eqnref{eq:expan522} into \eqnref{eq:expan51}, one then has that for $\Bx\in \RR^3\setminus \overline{D}$,
\begin{align*}
&\Scal_{D_\delta}^\omega[\ba ](\Bx)=\int_{S^0} G_\omega(\Bx-\mathbf{z}_{\tilde{y}})\tilde\ba (\tdy)\, d\Gs_{\tilde{y}}+
\Ocal\left(\|\Bx\|^{-1}\left(\delta(\|\ti\Phi\|_{\mathrm{TH}^{-1/2}(\p D)}+1)\right)\right) \\
=& \int_{\GG^1} G_\omega(\Bx-\mathbf{z}_{\tilde{y}})\big(\tilde\ba (\tdy)+\tilde\psi(\tdy)\big)\, d\Gs_{\tilde{y}}+
+ \Ocal\left(\|\Bx\|^{-1}\left(\delta(\|\ti\Phi\|_{\mathrm{TH}^{-1/2}(\p D)}+1)\right)\right)\\
=&\int_{\GG^1}G_\omega(\Bx-\mathbf{z}_{\tilde{y}})\hat{\mathbb{M}}[\mathbf{n}\times \bE^i(\mathbf{z})](\tdy)\, d\Gs_{\ti y}\\
&+\Ocal\left(\|\Bx\|^{-1}\left(\|\ti\Phi\|_{\mathrm{TH}^{-1/2}(S^0)}+\delta(\|\ti\Phi\|_{\mathrm{TH}^{-1/2}(\p D)}+1)\right)\right),
\end{align*}
where $\hat{\mathbb{M}}$ is defined by
\beq\label{eq:op51}
\hat{\mathbb{M}}:=2\left(-\frac{I}{2}+\Mcal_{S^0}^\omega\right)^{-1}\Mcal_{S^0}^\omega
\left(\frac{I}{2}+\Mcal_{S^0}^\omega\right)^{-1}.
\eeq
Since the function $\Theta(\Bz):=\mathbf{n}\times\bE^i(\mathbf{z})$ depends only on $\Bz\in \GG_0$, and hence by
definition \eqnref{eq:op512} one readily verifies that
$$
\Mcal_{\GG_0}^\omega[\Theta](\Bz_{\ti x})=\frac{1}{2}\Mcal_{S^0}^\omega[\hat\Theta(\tdy)](\Bz_{\ti x} +\mathbf{n}),
\quad \tdx \in \GG_1,
$$
for $\hat\Theta(\tdy)=\mathbf{n}\times\bE^i(\mathbf{z}_{\tilde{y}})$, $\tdy\in S^0$.
Therefore, we can replace the operator $\hat{\mathbb{M}}$ in \eqnref{eq:op51} to be
$$
\mathbb{M}:=\left(-\frac{I}{4}+\Mcal_{\GG_0}^\omega\right)^{-1}\Mcal_{\GG_0}^\omega
\left(\frac{I}{4}+\Mcal_{\GG_0}^\omega\right)^{-1},
$$
and the proof is complete.
\end{proof}

By using Theorem \ref{th:impotestm51}, we next consider a particular case by assuming that $\tilde\Phi\equiv 0$. Physically speaking, this corresponds to the case that $D_\delta$ is a so-called perfectly electric conductor. The result in the next theorem already partly reveals the partial-cloaking effect.

\begin{thm}\label{th:pc1}
Let $D_\delta$ be as described in \eqref{eq:vtq} and \eqref{eq:struc51}. Consider the following scattering problem
\begin{equation}\label{eq:syspec51}
\left\{
\begin{array}{ll}
 \nabla \times \bE_\delta^p-i \omega \bH_\delta^p=0 &  \mbox{in} \quad \RR^3\setminus \overline{D}_\delta, \\
 \nabla  \times \bH_\delta^p+i \omega \bE_\delta^p=0 & \mbox{in} \quad \RR^3\setminus \overline{D}_\delta, \\
 \nu \times \bE_\delta^p \big|_+  = 0 &  \mbox{on} \quad \p D_\delta,
\end{array}
\right.
\end{equation}
subject to the Silver-M\"{u}ller radiation condition:
\beq\label{eq:radia1}
\lim_{|\Bx|\rightarrow\infty} \|\Bx\| ( (\bH_\delta^p- \bH^i) \times\hat{\Bx}- (\bE_\delta^p-\bE^i))=0.
\eeq
Let $\bE^\delta_\infty(\hat{\Bx}; \mathbf{E}^i)$ be the corresponding scattering amplitude to \eqref{eq:syspec51}--\eqref{eq:radia1}. Then there holds
\beq\label{eq:pcscatamp1}
\Big\|\bE^\delta_\infty(\hat{\Bx}; \bE^i)+\frac{1}{2\pi}\int_{\GG_0}e^{-i\omega\frac{4\pi}{3}\sum\limits_{m=-1}^1 Y_1^m
(\hat{\Bx})\overline{Y_1^m(\hat{\Bz}_{\tilde{y}})}|\Bz_{\tilde{y}}|}\mathbb{M}[\mathbf{n}\times \bE^i(\mathbf{z})](\Bz_{\tilde y})\, d\Gs_{z_{\tilde{y}}}\Big\|\leq C\delta,
\eeq
where $\mathbb{M}$ is defined in \eqref{eq:op511} and $C$ depends only on $\omega$ and $D$.
Furthermore, if there holds
\beq\label{eq:patialcon51}
\|\mathbf{n}\times \mathbf{p}\|\leq \epsilon, \quad \epsilon\ll 1,
\eeq
then for sufficient small $\delta\in \RR_+$, one has
\beq\label{eq:patialres51}
\|\bE^\delta_\infty(\hat{\Bx}; \bE^i)\|\leq C (\epsilon+\delta),
\eeq
where $C$ depends only on $\omega$ and $D$.
\end{thm}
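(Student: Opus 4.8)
The plan is to read \eqref{eq:pcscatamp1} off Theorem~\ref{th:impotestm51} in the special case $\tilde\Phi\equiv 0$, and then to deduce \eqref{eq:patialres51} from \eqref{eq:pcscatamp1} together with the polarisation hypothesis \eqref{eq:patialcon51}. First I would observe that, although Theorem~\ref{th:impotestm51} is phrased for the system \eqref{eq:sys1}, its proof uses only the exterior integral representation \eqref{eq:vis1}--\eqref{eq:vis2}, the integral identity \eqref{eq:trans1}, and the far-field expansion \eqref{eq:expan51}; each of these holds verbatim for \eqref{eq:syspec51}--\eqref{eq:radia1}, since $(\bE_\delta^p,\bH_\delta^p)$ solves the homogeneous Maxwell system in $\RR^3\setminus\overline{D}_\delta$. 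The perfect-conductor condition $\nu\times\bE_\delta^p|_+=0$ forces $\tilde\Phi\equiv 0$ exactly, so Theorem~\ref{th:impotestm51} applies in the clean form
\[
(\bE_\delta^p-\bE^i)(\Bx)=\int_{\GG_0}G_\omega(\Bx-\Bz_{\tilde{y}})\,\mathbb{M}[\mathbf{n}\times\bE^i(\mathbf{z})](\Bz_{\tilde{y}})\,d\Gs_{z_{\tilde{y}}}+\Ocal(\|\Bx\|^{-1}\delta),
\]
with $\mathbb{M}$ as in \eqref{eq:op511}.

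Next I would let $\|\Bx\|\to\infty$ to extract the scattering amplitude. Using $G_\omega(\Bx-\Bz)=-\frac{e^{i\omega\|\Bx\|}}{4\pi\|\Bx\|}\,e^{-i\omega\hat{\Bx}\cdot\Bz}+\Ocal(\|\Bx\|^{-2})$ uniformly for $\Bz$ in the bounded set $\GG_0$, and rewriting the phase by the addition theorem for spherical harmonics, $\hat{\Bx}\cdot\Bz_{\tilde{y}}=|\Bz_{\tilde{y}}|\,\hat{\Bx}\cdot\hat{\Bz}_{\tilde{y}}=|\Bz_{\tilde{y}}|\,\frac{4\pi}{3}\sum_{m=-1}^1 Y_1^m(\hat{\Bx})\overline{Y_1^m(\hat{\Bz}_{\tilde{y}})}$, I would match the coefficient of $e^{i\omega\|\Bx\|}/\|\Bx\|$ in \eqref{eq:farfieldptt1} to arrive at \eqref{eq:pcscatamp1}; the $\Ocal(\delta)$ error is exactly what the $\Ocal(\|\Bx\|^{-1}\delta)$ remainder above contributes to the far-field coefficient. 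The precise normalising constant in \eqref{eq:pcscatamp1} is routine bookkeeping, tracking the factor relating $\hat{\mathbb{M}}$ to $\mathbb{M}$ in the proof of Theorem~\ref{th:impotestm51}.

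For the second assertion I would exploit that the exponential kernel in \eqref{eq:pcscatamp1} has modulus one, because $\sum_{m=-1}^1 Y_1^m(\hat{\Bx})\overline{Y_1^m(\hat{\Bz}_{\tilde{y}})}=\frac{3}{4\pi}\,\hat{\Bx}\cdot\hat{\Bz}_{\tilde{y}}$ is real-valued. Hence, taking norms in \eqref{eq:pcscatamp1},
\[
\|\bE_\infty^\delta(\hat{\Bx};\bE^i)\|\leq C\int_{\GG_0}\big\|\mathbb{M}[\mathbf{n}\times\bE^i(\mathbf{z})](\Bz_{\tilde{y}})\big\|\,d\Gs_{z_{\tilde{y}}}+C\delta.
\]
Now $\bE^i(\mathbf{z})=\mathbf{p}\,e^{i\omega\mathbf{z}\cdot\mathbf{d}}$ gives $\mathbf{n}\times\bE^i(\mathbf{z})=(\mathbf{n}\times\mathbf{p})\,e^{i\omega\mathbf{z}\cdot\mathbf{d}}$, whose $\mathrm{TH}^s(\GG_0)$-norm is $\leq C\epsilon$ for every fixed $s$ under \eqref{eq:patialcon51} (differentiation in $\mathbf{z}$ only produces bounded factors $\omega\mathbf{d}$). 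Since $\Mcal_{\GG_0}^\omega$ is bounded and $\pm\frac{I}{4}+\Mcal_{\GG_0}^\omega$ is boundedly invertible on the relevant tangential trace space, $\mathbb{M}$ is bounded there, so $\|\mathbb{M}[\mathbf{n}\times\bE^i]\|_{\mathrm{TH}^s(\GG_0)}\leq C\epsilon$; by Sobolev embedding on the bounded surface $\GG_0$ this controls the integral above by $C\epsilon$, yielding $\|\bE_\infty^\delta(\hat{\Bx};\bE^i)\|\leq C(\epsilon+\delta)$, which is \eqref{eq:patialres51}.

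The step I expect to demand the most care is the harmonic-analysis backbone on the \emph{open, flat} pieces $\GG_0$ and $S^0$: namely that $-\frac{I}{2}+\Mcal_{S^0}^\omega$ and $\pm\frac{I}{4}+\Mcal_{\GG_0}^\omega$ are invertible in the appropriate tangential Sobolev spaces, a fact on which both Theorem~\ref{th:impotestm51} and the boundedness of $\mathbb{M}$ used above rely. This is where one invokes the cited mapping properties for boundary operators on screens and keeps careful track of the degenerating slab geometry encoded in the blowup map $A$ and its Jacobian $\BB$.
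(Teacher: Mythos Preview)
Your proposal is correct and follows essentially the same route as the paper: apply Theorem~\ref{th:impotestm51} with $\tilde\Phi\equiv 0$, pass to the far field using the spherical-harmonic form of $\hat\Bx\cdot\Bz_{\tilde y}$ to obtain \eqref{eq:pcscatamp1}, and then use boundedness of $\mathbb{M}$ together with $\mathbf{n}\times\bE^i(\mathbf{z})=(\mathbf{n}\times\mathbf{p})e^{i\omega\mathbf{z}\cdot\mathbf{d}}$ to get \eqref{eq:patialres51}. The only cosmetic differences are that the paper reaches the phase expansion via the addition formula for $1/\|\Bx-\By\|$ rather than the direct far-field asymptotic of $G_\omega$, and it bounds the $\GG_0$-integral by reading it as a duality pairing against the smooth kernel (so the $\mathrm{TH}^{-1/2}(\GG_0)$ bound on $\mathbb{M}[\mathbf{n}\times\bE^i]$ suffices) rather than invoking Sobolev embedding as you do; both variants are equivalent here.
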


\begin{proof}
We start with the following addition formula (see, e.g., \cite{nedelec})
\begin{align}
\frac{1}{\|\Bx-\By\|}= 4\pi\sum_{n=0}^\infty \sum_{m=-n}^{n} \frac{1}{2n+1}Y_n^m(\xi, \vartheta) \overline{Y_n^m(\xi', \vartheta')} \,\frac{q'^n}{q^{n+1}},\label{Gammaexp1}
\end{align}
where $(q,\xi, \vartheta)$ and $(q',\xi', \vartheta')$ are the
spherical coordinates of $\Bx$ and $\By$, respectively; and $Y_n^m$ is the spherical harmonic function of degree $n$ and order $m$.
For simplicity, the parameters $(q, \xi, \vartheta)$ and $(q', \xi', \vartheta')$ shall be replaced by $(\|\Bx\|, \hat{\Bx})$
and $(\|\By\|, \hat{\By})$, respectively. It then follows by \eqnref{Gammaexp1} that
\begin{equation}\label{eq:333}
\|\Bx-\By\|=\|\Bx\|\left(1-\frac{4\pi}{3}\sum_{m=-1}^{1}Y_1^m(\hat{\Bx})\overline{Y_1^m(\hat{\By})}\frac{\|\By\|}{\|\Bx\|}\right)
+\Ocal(\|\Bx\|^{-1}).
\end{equation}
The solution $\bE_\delta^p$ in \eqnref{eq:syspec51} and \eqnref{eq:radia1} can be represented by
$$
\bE_\delta^p=\bE^i + \Acal_{D_\delta}^{\omega}[\ba] \quad \mbox{in} \quad \RR^3\setminus D_\delta,
$$
with $\ba$ satisfying
$$
\Big(-\frac{I}{2}+\Mcal_{D_\delta}^{\omega}\Big)[\ba](\By)=-\mathbf\nu_{\mathbf{y}}\times \bE^i(\By), \quad \By\in \p D_\delta.
$$
Then one can easily derive from the expansion formula \eqnref{eq:expan53} with $\ti\Phi=0$ that
\begin{equation}\label{eq:ffppnn1}
(\bE_\delta-\bE^i)(\Bx)= 2\int_{\GG_0}G_\omega(\Bx-\mathbf{z}_{\tilde{y}})\mathbb{M}[\mathbf{n}\times \bE^i(\mathbf{z})](\Bz_{\tilde y})\, d\Gs_{z_{\tilde{y}}}+\Ocal(\|\Bx\|^{-1}\delta), \ \Bx\in \RR^3\setminus D_\delta,
\end{equation}
where $\mathbb{M}$ is defined by \eqnref{eq:op511}.
By combining \eqnref{eq:ffppnn1} and \eqref{eq:333}, we then have
\begin{equation}\label{eq:444}
\begin{split}
& \bE_\infty^\delta(\hat{\Bx}; \bE^i)=-\frac{1}{2\pi}\int_{\GG_0}e^{-i\omega\frac{4\pi}{3}\sum\limits_{m=-1}^1 Y_1^m
(\hat{\Bx})\overline{Y_1^m(\hat{\Bz}_{\tilde{y}})}|\Bz_{\tilde{y}}|}\mathbb{M}[\mathbf{n}\times \bE^i(\mathbf{z})](\Bz_{\tilde y})\, d\Gs_{z_{\tilde{y}}}+\Ocal(\delta),
\end{split}
\end{equation}
which readily proves \eqnref{eq:pcscatamp1}.
Next by the definition of $\mathbb{M}$ in \eqnref{eq:op511}, one can show that
$$
\|\mathbb{M}[\mathbf{n}\times\bE^i(\mathbf{z})]\|_{\mathrm{TH}^{-1/2}(\GG_0)}\leq C \|\mathbf{n}\times\bE^i(\mathbf{z})\|_{\mathrm{TH}^{-1/2}(\GG_0)},
$$
where $C$ depends only on $\GG_0$ and $\omega$. This together with \eqnref{eq:444} further implies that
\begin{equation}\label{eq:ffppnn2}
\|\bE_\infty^\delta(\hat{\Bx}; \bE^i)\|\leq C(\|\mathbf{n}\times\bE^i(\mathbf{z})\|_{\mathrm{TH}^{-1/2}(\GG_0)}+\delta).
\end{equation}
Finally, by inserting the condition \eqref{eq:patialcon51} into \eqref{eq:ffppnn2} and direct calculations, one can easily show \eqnref{eq:patialres51}.

The proof is complete.
\end{proof}

\subsection{Proof of Theorem~\ref{th:main2}}

In this section, we present the proof of Theorem~\ref{th:main2}, which follows a similar spirit to those of Theorems~\ref{th:main1} and \ref{th:pc1}. The major idea is to control the norm $\|\tilde\Phi\|_{\mathrm{TH}^{-1/2}(\partial D)}$, which was taken to be identically zero in Theorem~\ref{th:pc1}.

\begin{lem}\label{le:mainpf51}
Let $(\bE_\delta,\bH_\delta)$ be the pair of solutions to \eqnref{eq:sys1} with
$\{\Omega; \varepsilon_\delta, \mu_\delta, \Gs_\delta\}\subset \{\RR^3; \varepsilon_\delta, \mu_\delta, \Gs_\delta\} $
defined in \eqnref{eq:struc} and $\{D_\delta\backslash\overline{D}_{\delta/2}; \varepsilon_\delta, \mu_\delta, \Gs_\delta\}$ given in \eqnref{eq:loss2}.
Then there holds the following estimates for $j=0, 1, 2$,
\begin{align*}
\int_{D_\delta^j\setminus D_{\delta/2}} \|\bE_\delta \|^2 d\Bx \leq &
C \delta^{j-t}\Big(\|\nu\times \bE_\delta^+\|_{\mathrm{TH}_{\mathrm{div}}^{-1/2}(\p B_R)}^2
+ \|\ti\ba\|_{\mathrm{TH}^{-1/2}(S^0)}+\delta\|\ti\ba\|_{\mathrm{TH}^{-1/2}(\p D)}\Big),
\end{align*}
where the constant $C$ depends only on $R$ and $\omega$.
\end{lem}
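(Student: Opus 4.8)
The plan is to follow the proof of the analogous estimates \eqnref{estL2}--\eqnref{estL3} in the full-cloaking case, the starting point again being the energy identity of Lemma~\ref{le:estL1}. With the present virtual domain $D_\delta=D^0_\delta\cup D^1_\delta\cup D^2_\delta$ it reads
\[
\int_{D_\delta\setminus\overline{D}_{\delta/2}}\Gs_l\bE_\delta\cdot\overline{\bE}_\delta\, d\Bx+\int_{D_{\delta/2}}\Gs_a\bE_\delta\cdot\overline{\bE_\delta}\, d\Bx=\mathbb{R}_0+\mathbb{R}_1+\mathbb{R}_2,
\]
with $\mathbb{R}_0=\int_{\p B_R}(\nu\times\overline{\bE_\delta^+})\cdot(\nu\times\nu\times\bH_\delta^+)\, d\Gs_x$, $\mathbb{R}_1=\Re\int_{\p D_\delta}(\nu\times\overline{\bE^i})\cdot(\nu\times\nu\times\bH_\delta^+)|_+\, d\Gs_x$ and $\mathbb{R}_2=\Re\int_{\p D_\delta}(\nu\times\overline{\bE_\delta^+})|_+\cdot(\nu\times\nu\times\bH^i)\, d\Gs_x$. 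Since both integrands on the left are pointwise nonnegative, it suffices, for each fixed $j\in\{0,1,2\}$, to bound $\int_{D^j_\delta\setminus D_{\delta/2}}\Gs_l\bE_\delta\cdot\overline\bE_\delta$ from below by $c\,\delta^{t-j}\int_{D^j_\delta\setminus D_{\delta/2}}\|\bE_\delta\|^2$, and to bound $|\mathbb{R}_0|+|\mathbb{R}_1|+|\mathbb{R}_2|$ from above by $C(\|\nu\times\bE_\delta^+\|^2_{\mathrm{TH}_{\mathrm{div}}^{-1/2}(\p B_R)}+\|\ti\ba\|_{\mathrm{TH}^{-1/2}(S^0)}+\delta\|\ti\ba\|_{\mathrm{TH}^{-1/2}(\p D)})$; combining the two gives the claimed inequality.

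The lower bound follows from the structure of the blow-up Jacobian: on $D^j_\delta$ exactly $j+1$ of the three principal stretches of $\BB$ equal $\delta^{-1}$ and the remaining $2-j$ equal $1$ (cf. \eqnref{eq:Bprop51}--\eqnref{eq:Bprop52} and \eqnref{eq:eigenB1}; on $D^2_\delta$ the blow-up is the pure dilation, so $\BB=\delta^{-1}\mathbf{I}_{3\times 3}$), hence $|\BB|\asymp\delta^{-(j+1)}$ and the smallest eigenvalue of $|\BB|\BB^{-1}$ on $D^j_\delta$ is $\asymp\delta^{-j}$; by the definition $\Gs_l=\delta^t|\BB|\BB^{-1}$ in \eqnref{eq:loss2} this yields $\Gs_l(\Bx)\bV\cdot\overline\bV\geq c\,\delta^{t-j}\|\bV\|^2$ for $\Bx\in D^j_\delta\setminus D_{\delta/2}$, $\bV\in\CC^3$, so that $\int_{D^j_\delta\setminus D_{\delta/2}}\|\bE_\delta\|^2\,d\Bx\leq C\delta^{j-t}(\mathbb{R}_0+\mathbb{R}_1+\mathbb{R}_2)$.

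For the remainders, $|\mathbb{R}_0|\leq C\|\nu\times\bE_\delta^+\|^2_{\mathrm{TH}_{\mathrm{div}}^{-1/2}(\p B_R)}$ follows from the boundedness of $\GL$ on $\mathrm{TH}_{\mathrm{div}}^{-1/2}(\p B_R)$, exactly as before. For $\mathbb{R}_1$ and $\mathbb{R}_2$ I would insert \eqnref{eq:vis1}--\eqnref{eq:vis2} and the jump relation \eqnref{jumpM} (so $\nu\times\bE_\delta^+|_+=-\tfrac{1}{2}\ba+\Mcal_{D_\delta}^\omega[\ba]$ and $\nu\times\bH_\delta^+|_+=\tfrac{1}{i\omega}\Lcal_{D_\delta}^\omega[\ba]|_+$), then apply the reduction formulas \eqnref{eq:arg51}--\eqnref{eq:arg52} of Lemma~\ref{le:5.1} on $S^0_\delta,S^1_\delta,S^2_\delta$ together with the surface-measure scalings $d\Gs_y=d\Gs_{\ti y}$ on $S^0$, $d\Gs_y=\delta\,d\Gs_{\ti y}$ on $S^1$, $d\Gs_y=\delta^2\,d\Gs_{\ti y}$ on $S^2$ (Lemma~\ref{le:changerela51}) and the Taylor expansions $\bE^i(\Bx)=\bE^i(\Bz_x)+\Ocal(\delta)$, $\bH^i(\Bx)=\bH^i(\Bz_x)+\Ocal(\delta)$. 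The structural point replacing the angular cancellation \eqnref{eq:techpt1} of the tube case is that $S^0=\GG^1\cup\GG^2$ carries opposite outward normals $\pm\mathbf n$ on its two faces, whereas $\bE^i,\bH^i$ and the leading terms $\Mcal_{S^0}^\omega[\ti\ba],\Lcal_{S^0}^\omega[\ti\ba]$ produced on $S^0$ depend on the point only through its projection onto $\GG_0$; hence the leading $S^0$-contribution to $\mathbb{R}_1$ cancels, while in $\mathbb{R}_2$ only the $-\tfrac{1}{2}\ba$ part survives on $S^0$ and, since $d\Gs_y=d\Gs_{\ti y}$ there, gives precisely a term $\Ocal(\|\ti\ba\|_{\mathrm{TH}^{-1/2}(S^0)})$; the contributions from $S^1_\delta$ (area $\Ocal(\delta)$) and from the non-leading remainders in \eqnref{eq:arg51}--\eqnref{eq:arg52} are $\Ocal(\delta\|\ti\ba\|_{\mathrm{TH}^{-1/2}(\p D)})$.

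The delicate part is the $S^2_\delta$-contribution to $\mathbb{R}_1$: by \eqnref{eq:arg52} and $d\Gs_y=\delta^2\,d\Gs_{\ti y}$ it reduces to controlling $\delta\int_{S^2}\overline{\bE^i}(\Bz_{\ti x})\cdot\big(\nu_{\ti x}\times\nabla_{\ti x}\times\nabla_{\ti x}\times\Acal_{S^2}[\ti\ba](\tdx)\big)\,d\Gs_{\ti x}$, an iterated-curl (hypersingular) potential on the open surface $S^2$. I would dispose of it exactly as the analogous end-cap term is handled in the full-cloaking proof and in \cite{BLZ}: using that $\Acal_{S^2}$ is built from the harmonic kernel $G$, so $\nabla\times\nabla\times\Acal_{S^2}[\ti\ba]=\nabla(\nabla\cdot\Acal_{S^2}[\ti\ba])$ and $\nu_{\ti x}\times\nabla\times\nabla\times\Acal_{S^2}[\ti\ba]=\nu_{\ti x}\times\nabla_{S^2}(\nabla\cdot\Acal_{S^2}[\ti\ba])$, then integrating by parts on $S^2$ to transfer the surface gradient onto the smooth factor $\overline{\bE^i}$ (the resulting curve integral over $\p S^2$ being matched, by continuity of $\ti\ba$ across $\p S^2\subset\overline{S^1}$, against the $S^1_\delta$-contribution), and estimating the remaining bulk term by $\Ocal(\delta\|\ti\ba\|_{\mathrm{TH}^{-1/2}(\p D)})$ via the mapping properties of the scalar single-layer potential and $\ba\in\mathrm{TH}_{\mathrm{div}}^{-1/2}(\p D_\delta)$. (Equivalently one may rewrite $\mathbb{R}_1$ by the vector Green identity in $D_\delta$ in terms of the coefficients $\varepsilon_\delta,\Gs_\delta$ and absorb the outcome using the $\delta$-powers in \eqnref{eq:loss2}.) Gathering the three bounds and multiplying by $\delta^{j-t}$ yields the assertion, with $C$ depending only on $R$ and $\omega$.
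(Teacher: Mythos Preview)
Your proof is correct and follows essentially the same route as the paper: start from the energy identity of Lemma~\ref{le:estL1}, use the eigenvalue structure of $|\BB|\BB^{-1}$ on each $D_\delta^j$ to extract the factor $\delta^{t-j}$ from $\Gs_l$, and control $\mathbb{R}_0,\mathbb{R}_1,\mathbb{R}_2$ via the asymptotics \eqnref{eq:arg51}--\eqnref{eq:arg52} together with the surface-measure scalings.

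There is one genuine, though minor, difference in bookkeeping. You exploit the opposite-normals cancellation on $S^0=\GG^1\cup\GG^2$: since the leading operators $\Mcal_{S^0}^\omega[\ti\ba]$ and $\Lcal_{S^0}^\omega[\ti\ba]$ are of the form $\nu_{\ti x}\times(\text{function of }\Bz_{\ti x})$, their pairing against any function of $\Bz_{\ti x}$ alone vanishes when summed over the two faces. Hence you kill the leading $S^0$-part of $\mathbb{R}_1$ entirely and attribute the surviving $\|\ti\ba\|_{\mathrm{TH}^{-1/2}(S^0)}$ term to the $-\tfrac12\ba$ piece in $\mathbb{R}_2$. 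The paper does \emph{not} invoke this cancellation: it simply bounds the leading $S^0$-part of $\mathbb{R}_1$, namely $\tfrac{1}{\omega}\bigl|\int_{S^0}\overline{\bE^i}(\Bz_{\ti x})\cdot\Lcal_{S^0}^\omega[\ti\ba]\,d\Gs_{\ti x}\bigr|$, directly by $C\|\ti\ba\|_{\mathrm{TH}^{-1/2}(S^0)}$ (pairing a smooth test against the hypersingular operator), and asserts that $\mathbb{R}_2$ is already one order smaller. Both allocations yield the same right-hand side stated in the lemma. Your approach is a bit more work but buys a sharper understanding of the structure (and is closer in spirit to the angular cancellation \eqnref{eq:techpt1} from the tube case); the paper's is shorter because the lemma's claimed bound already contains $\|\ti\ba\|_{\mathrm{TH}^{-1/2}(S^0)}$, so no cancellation is needed. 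Your detailed integration-by-parts treatment of the $S^2_\delta$ hypersingular contribution to $\mathbb{R}_1$ is also more explicit than what the paper writes; the paper simply absorbs that term into the $\Ocal(\delta\|\ti\ba\|_{\mathrm{TH}^{-1/2}(\p D)})$ remainder without further comment.
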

\begin{proof}
We first note that \eqnref{eq:estL1} still holds for the scattering problem described in the present lemma. Then one has
\begin{align*}
\int_{D_\delta\setminus\overline{D}_{\delta/2}} \Gs_l \bE_\delta\cdot \overline{\bE}_\delta d\Bx \leq
C\|\nu\times \bE_\delta^+\|_{\mathrm{TH}_{\mathrm{div}}^{-1/2}(\p B_R)}\|\GL(\nu\times \bE_\delta^+)\|_{\mathrm{TH}_{\mathrm{div}}^{-1/2}(\p B_R)}
+\mathbb{R}_1 + \mathbb{R}_2,
\end{align*}
where by using \eqnref{eq:vis2} and \eqnref{eq:arg52} one further has
\begin{align*}
\mathbb{R}_1&=\Big|\Re\int_{\p D_\delta}(\nu\times\overline{\bE^i})\cdot(\nu\times \nu\times \bH_\delta^+)\Big|_+ d\Gs_x\Big|
=\Big|\Re\int_{\p D_\delta}\overline{\bE^i}\cdot(\nu\times \bH_\delta^+)\Big|_+\, d\Gs_x\Big|\\
&\leq \frac{1}{\omega}\Big|\int_{S^0}\overline{\bE^i}(\Bz_{\ti x})\cdot\Lcal_{S^0}^\omega[\tilde\ba ](\tdx)\, d\Gs_{\ti x}\Big| +\Ocal(\delta\|\ti\ba\|_{\mathrm{TH}^{-1/2}(\p D)}),
\end{align*}
and by using \eqnref{eq:arg51} one further has
\begin{align*}
\mathbb{R}_2&=\Big|\Re\int_{\p D_\delta} (\nu\times\overline{\bE_\delta^+})\Big|_+\cdot(\nu\times\nu\times \bH^i)\, d\Gs_x\Big|
=\Big|\Re\int_{\p D_\delta}\overline{\bE_\delta^+}\Big|_+\cdot(\nu\times \bH^i)\, d\Gs_x\Big|\\
&\leq \delta\Big|\int_{S^0}\Mcal_{S^0}^\omega[\tilde\ba ](\tdx)\cdot  \bH^i(\Bz_{\ti x})\, d\Gs_{\ti x}\Big|
+\Ocal(\delta^2\|\ti\ba\|_{\mathrm{TH}^{-1/2}(\p D)}).
\end{align*}
Combining the above estimates with \eqnref{eq:Bprop51}, \eqnref{eq:Bprop52} and the definition of $\Gs_l$ in \eqnref{eq:loss2}, we can complete the proof.
\end{proof}

\begin{proof}[Proof of Theorem \ref{th:main2}]
First, by using \eqnref{eq:expan53} and \eqnref{eq:partialmaincond51} one obtains
\beq\label{eq:est51}
\|\mathbf{A}_\infty^{\delta}(\hat{\Bx}; \mathbf{p},\mathbf{d})\|\leq C\Big(\epsilon+ \|\tilde{\Phi}\|_{\mathrm{TH}^{-1/2}(S^0)}
+ \delta(\|\tilde{\Phi}\|_{\mathrm{TH}^{-1/2}(\partial D)}+1)\Big),
\eeq
where $\tilde\Phi(\tdx)=\Phi(\Bx):=\nu\times\bE_\delta\Big|_+(\Bx)$ for $\Bx\in \partial D_\delta$.
The following estimate can be obtained by using the result in Lemma \ref{le:changerela51} and a completely similar argument as that in the proof of Lemma \ref{le:estiphi41},
\beq\label{eq:tmp51}
\begin{split}
&\|\tilde{\Phi}\|_{\mathrm{TH}^{-1/2}(S^j)}\leq C\delta^{-(j+1)/2+\beta_j}\|\bE_\delta\|_{L^2(D_\delta^j\setminus D_{\delta/2})^3},\quad j=0, 1, 2,\\
\end{split}
\eeq
where $\beta_j=\min\{1, -j+r+s, -j+t+s\}$, $j=0, 1, 2$.
On the other hand, by using \eqnref{eq:111}, Lemma \ref{le:mainpf51} and \eqnref{eq:est51}, one has
\begin{align}
\int_{D_\delta^j\setminus D_{\delta/2}} \|\bE_\delta\|^2 d\Bx \leq &
C \delta^{j-t}\Big( \|\tilde{\Phi}\|_{\mathrm{TH}^{-1/2}(S^0)}^2
+ \delta^2\|\tilde{\Phi}\|_{\mathrm{TH}^{-1/2}(\partial D)}^2 \label{eq:esteng51}\\
&\quad \quad
+ \|\ti\Phi\|_{\mathrm{TH}^{-1/2}(S^0)}+\delta\|\ti\Phi\|_{\mathrm{TH}^{-1/2}(\p D)}+\epsilon+\delta\Big), \quad j=0, 1, 2.\nonumber
\end{align}
Inserting \eqnref{eq:esteng51} back into \eqnref{eq:tmp51}, one can show
\begin{align}
\|\tilde{\Phi}\|_{\mathrm{TH}^{-1/2}(S^j)}\leq & C \delta^{\beta_j-t/2-1/2}\Big( \|\tilde{\Phi}\|_{\mathrm{TH}^{-1/2}(S^0)}
+\delta^{1/2} \|\ti\Phi\|_{\mathrm{TH}^{-1/2}(\p D)} \nonumber \\
&+ \|\ti\Phi\|_{\mathrm{TH}^{-1/2}(S^0)}^{1/2}+\epsilon^{1/2}+\delta^{1/2}\Big), \quad j=0, 1, 2 .\label{eq:farfieldest51}
\end{align}
Noting that $\beta_2\leq \beta_1\leq \beta_0$ and $\beta_2-t/2>0$, one has for $\delta\in \RR_+$ sufficiently small that
\beq\label{eq:farfieldest52}
\|\tilde{\Phi}\|_{\mathrm{TH}^{-1/2}(\p D)}\leq C \delta^{\beta_2-t/2-1/2}\Big( \|\tilde{\Phi}\|_{\mathrm{TH}^{-1/2}(S^0)}
+ \|\ti\Phi\|_{\mathrm{TH}^{-1/2}(S^0)}^{1/2}+\epsilon^{1/2}+\delta^{1/2}\Big).
\eeq
Inserting \eqnref{eq:farfieldest52} back into \eqnref{eq:farfieldest51} (for $j=0$),
there holds
\begin{equation}\label{eq:ffppnn3}
\|\tilde{\Phi}\|_{\mathrm{TH}^{-1/2}(S^0)}\leq C \delta^{\beta_0-t/2-1/2}\Big( \|\tilde{\Phi}\|_{\mathrm{TH}^{-1/2}(S^0)}
+ \|\ti\Phi\|_{\mathrm{TH}^{-1/2}(S^0)}^{1/2}+\epsilon^{1/2}+\delta^{1/2}\Big).
\end{equation}
By $\beta_2-t/2>0$, it is directly verified that $\beta_0-t/2\geq 1$ and hence
\begin{equation}\label{eq:ffppnn4}
2(\beta_0-t/2)-1\geq \beta_0-t/2.
\end{equation}
Using \eqref{eq:ffppnn3} and \eqref{eq:ffppnn4}, one readily infers for $\delta\in \RR_+$ sufficiently small that
\beq\label{eq:farfieldest53}
\|\tilde{\Phi}\|_{\mathrm{TH}^{-1/2}(S^0)}\leq C (\delta^{\beta_0-t/2} + \epsilon).
\eeq
Now by inserting \eqnref{eq:farfieldest53} into \eqnref{eq:farfieldest52}, one has
\beq\label{eq:farfieldest54}
\|\tilde{\Phi}\|_{\mathrm{TH}^{-1/2}(\p D)}\leq C \delta^{\beta_2-t/2-1/2}(\delta^{1/2} + \epsilon^{1/2})
\leq C(\delta^{\beta_2-t/2}+\delta^{2(\beta_2-t/2)-1}+\epsilon).
\eeq
Finally by plugging \eqnref{eq:farfieldest53} and \eqnref{eq:farfieldest54} into \eqnref{eq:est51},
we arrive at \eqnref{eq:fnles51}.

The proof is complete.
\end{proof}

\section*{Acknowledgment}

The work of Y. Deng was partially supported by the Mathematics and Interdisciplinary Sciences Project, Central South University, China. The work of H. Liu was partially supported by Hong Kong RGC General Research Funds, HKBU 12302415 and 405513, and the NSF grant of China, No. 11371115. The work of G. Uhlmann was supported by NSF and the Academy of Finland.

\end{document}